\numberwithin{equation}{section}
\newtheorem{theorem}{Theorem}[section]
\newtheorem{proposition}[theorem]{Proposition}
\newtheorem{lemma}[theorem]{Lemma}
\newtheorem{corollary}[theorem]{Corollary}
\newtheorem{definition}[theorem]{Definition}
\newtheorem{remark}[theorem]{Remark}
\declaretheorem[name={A}ssertion,numberwithin=section]{thm}
\newcommand{\supp}{\operatorname{supp}} 
\newcommand{\divv}{\operatorname{div}} 
\newcommand{\diag}{\operatorname{diag}}
\newcommand{\av}{-\hspace{-2.4ex}\int}
\newcommand{\Om}{\Omega}
\newcommand{\ve}{\varepsilon}
\newcommand{\om}{\omega}
\newcommand{\vp}{\varphi}
\newcommand{\ol}{\overline}
\newcommand{\ul}{\underline}
\newcommand{\kap}{\kappa}
\newcommand{\ee}{\mathrm{e}}
\newcommand{\vlo}{\zeta}
\newcommand{\npo}{\nu_{\pa\Om}}
\newcommand{\iv}{I}
\newcommand{\param}{{[\alpha]}}
\newcommand{\sff}{\mathsf{f}}
\newcommand{\sfg}{\mathsf{g}}
\newcommand{\sfV}{\mathsf{V}}
\newcommand{\sfs}{\mathsf{s}}
\newcommand{\sfI}{\mathsf{I}}
\newcommand{\errfi}{\frac{1}{2i+1}}
\newcommand{\nablaG}{\nabla_{\gamma}}
\newcommand{\DeltaG}{\Delta_\gamma}
\newcommand{\divG}{\divv_\gamma}
\newcommand{\nablaGeps}{\nabla_{\gamma_{\ve\rho}}}
\newcommand{\DeltaGeps}{\Delta_{\gamma_{\ve\rho}}}
\newcommand{\divGeps}{\divv_{\gamma_{\ve\rho}}}
\newcommand{\dimx}{\mathsf{d}}
\newcommand{\bbU}{[U^0]_-^+}
\newcommand{\mrem}{\boldsymbol{r}}
\newcommand{\Hd}{H^1(\Gamma)}
\newcommand{\wh}{\widehat}
\newcommand{\la}{\langle}
\newcommand{\ra}{\rangle}
\newcommand{\pa}{\partial}
\newcommand{\boldu}{\boldsymbol{y}}
\newcommand{\hms}{\mathcal{H}^{d-1}}
\newcommand{\Mb}{p}
\newcommand{\eps}{{\epsilon}}
\newcommand{\proj}{\mathfrak{p}}
\newcommand{\RD}{\mathfrak{R}}
\newcommand{\SL}{\mathfrak{S}}
\newcommand{\sfa}{{\sf a}}
\newcommand{\sfm}{{\sf m}}
\newcommand{\sfh}{{\sf h}}
\DeclareMathSymbol{\mlq}{\mathord}{operators}{``}
\DeclareMathSymbol{\mrq}{\mathord}{operators}{`'}
\title{Interface dynamics in a degenerate Cahn--Hilliard model for viscoelastic phase separation}
\author{Katharina Hopf\thanks{Weierstrass Institute for Applied Analysis and Stochastics, Berlin, Germany} , John King\thanks{University of Nottingham, UK} , Andreas M\"unch\thanks{University of Oxford, UK} , Barbara Wagner$^*$}
\date{}
\providecommand{\keywords}[1]
{\small\text{\textit{Keywords and phrases: }} #1
}
\begin{document}
	
	\maketitle
	
	\begin{abstract}\noindent		
		The formal sharp-interface asymptotics in a degenerate Cahn--Hilliard model for viscoelastic phase separation with cross-diffusive coupling to a bulk stress variable are shown to lead to non-local lower-order counterparts of the classical surface diffusion flow.
		The diffuse-interface model is a variant of the Zhou--Zhang--E model and has an Onsager gradient-flow structure with a rank-deficient mobility matrix reflecting the ODE character of stress relaxation. 
		In the case of constant coupling, we find that the evolution of the zero level set of the order parameter approximates the	so-called intermediate surface diffusion flow.  For non-constant coupling functions monotonically connecting the two phases, our asymptotic analysis leads to a new family of third-order evolution laws with associated propagation operators behaving, at leading order, like the square root of the minus Laplace–Beltrami operator.
		 In this case, the normal velocity of the moving sharp interface arises as the Lagrange multiplier in a constrained elliptic equation, which is at the core of our derivation. 	The constrained elliptic problem can be solved rigorously by a variational argument, and is shown to encode the gradient structure of the effective geometric evolution law.
		
		The asymptotics are presented for \textit{deep quench}, an intermediate free boundary problem based on the double-obstacle potential. 
	\end{abstract}
	
		\noindent\keywords{sharp-interface asymptotics,  intermediate surface diffusion flow,  fractional surface diffusion, degenerate Cahn--Hilliard equation, cross-diffusion, Onsager structure, rank-deficient mobility matrix, Lagrange multiplier.}

\section{Introduction}\label{sec:intro}

Phase separation occurs widely in multi-component systems involving immiscible or partially miscible constituents including melted alloys quenched to low temperature, complex fluids like emulsions, and  biological materials. It applies to situations where parameters are altered in such a way that a material's composition close to one of the pure phases is energetically favourable.
 In the early stage of phase separation, a mixture is often seen to undergo spinodal decomposition, leading to the formation of small droplets corresponding to an energetically preferred volume fraction. This process is primarily driven by a reduction in bulk free energy.
 At a later stage, when the mixture has already decomposed into distinct phases, decrease of interfacial energy, surface diffusion effects, and coarsening are key characteristics of the evolution. If a material's constituents have different mechanical properties, the internal time scales dictating the unmixing process may differ between species, inducing a dynamic asymmetry in the system~\cite{tanaka2000viscoelastic}. 
 Dynamically asymmetric materials can display complex transient morphologies during phase separation, including the early stages of coarsening. 
The presence of multiple time scales is frequently observed in polymer solutions due to the longer relaxation time of the polymer component.
Given the inherent viscoelastic effects, phase separation in polymer solutions is modelled by viscoelastic phase separation (VPS). It is thought to play a significant role in cell biology~\cite{Tanaka_2022_vps} due to its ability to exhibit transient patterns like volume shrinking and phase inversion.
Early two-fluid models for VPS in a binary mixture able to reproduce these phenomena were proposed in~\cite{doi1992dynamic,TO_1996_network}, and developed further by Tanaka et al., see~\cite{tanaka2000viscoelastic} and references therein. A shortcoming of these models is their lack of thermodynamic consistency accompanied by 
deficiencies in the stability properties of numerical approximation schemes. 
The first two-fluid model for VPS consistent with the second law of thermodynamics was derived by Zhou, Zhang, and E~\cite{ZZE_2006} based on ideas from non-equilibrium thermodynamics.
The detailed fluid model involves both reversible and irreversible processes, and consists of a degenerate Cahn--Hilliard equation coupled to a viscoelastic version of the incompressible Navier--Stokes equations involving the momentum equation and a tensorial equation describing stress relaxation. The global existence of weak solutions for a regularised version of this model with stress diffusion was established by Brunk and  Luk\'a\v{c}ov\'a-Medvid'ov{\'a}~\cite{BL_2022_vps_degmob}.

 In this article, we focus on a purely dissipative variant, proposed in~\cite{ZZE_2006} as a simplification of the original fluid model in a regime where hydrodynamic transport can be neglected. In this simplified description, the barycentric velocity of the mixture vanishes and 
the effects of viscoelasticity are encoded in an ODE-like equation for the spherical part of the stress tensor.
 Specifically, the evolving state is modelled by two scalar quantities, the difference in volume fraction $u(t,x)\in[-1,1]$ of the two components and an extra variable $q(t,x)\in \mathbb{R}$ accounting for spherical stress. The equations are posed in a bounded smooth domain $\Om\subset\mathbb{R}^\dimx$, $\dimx\ge2$, and take the form
 	\begin{subequations}\label{eq:Zhou}
 		 \begin{align}
 				\label{eq:Zhou.u}
 				&\partial_tu=-\divv\big(m(u)\,\boldsymbol{j}\big),
 				\qquad \boldsymbol{j}=-\Big[\nabla\tfrac{\delta F}{\delta u}-\frac{1}{(1-u^2)}\nabla (A(u)q)\Big],
 				\quad&& t>0,x\in\Om,
 				\\&\partial_tq=-\frac{1}{\tau(u)}q
 				+A(u)\divv\Big(\frac{m(u)}{(1-u^2)}\boldsymbol{j}\Big),
 				&& t>0,x\in\Om,
 				\label{eq:Zhou.q}
 		\end{align}
with $m(u)=(1-u^2)^2\tilde m(u)$, where $\tilde m,A,\tau\in C^\infty(\mathbb{R},\mathbb{R}_+)$ and $\inf_{\mathbb{R}} \tilde m>0$. The function $A$ denotes the bulk modulus and $\tau$ the relaxation time. For polymer solutions, we typically have $A(-1)<A(+1)$ and $\tau(-1)<\tau(+1)$ whenever $\{u=-1\}$ describes the pure solvent and $\{u=1\}$ the polymer phase.
Equations~\eqref{eq:Zhou.u}--\eqref{eq:Zhou.q} are supplemented by no-flux type and homogeneous Neumann boundary conditions
\begin{align}\label{eq:Zhou.bc}
m(u)\boldsymbol{j}\cdot\npo=0,\qquad\nabla u\cdot\npo=0,\;\qquad t>0,x\in\pa\Om,
\end{align}
where $\npo$ denotes the outer unit normal field to $\Om$.

The driving free energy underlying system~\eqref{eq:Zhou} is given by 
$H(u,q)=F(u)+\int_\Om\frac{q^2}{2}\,\dd x$,
where, in~\cite{ZZE_2006}, $F$ is chosen to be the logarithmic Cahn--Hilliard free energy 
 \begin{align}\label{eq:log.pot}
 F(u)=\int_\Om\big(\frac{\ve^2}{2}|\nabla u|^2+f(u)\big)\,\dd x,\qquad
 f(u)=\tfrac{1}{2}\theta\big(\lambda(1{+}u)+\lambda(1{-}u)\big)+\tfrac{1}{2}(1-u^2),& \\\text{with }\lambda(s)=s\log s,\quad\theta>0.&\nonumber
 \end{align}\end{subequations}
Here, $0<\ve\ll1$ denotes the interface thickness parameter, while $\theta$ describes the fixed temperature of this isothermal model.
 The quadratic part $\frac12\|q\|_{L^2}^2$ of $H$ can be seen as a penalty term for polymeric displacements. 
 Notice that for $A\equiv0$, equation~\eqref{eq:Zhou.u} reduces to a variant of the Cahn--Hilliard equation with `doubly' degenerate mobility in the sense that the mobility function $m(u)$ 
 vanishes quadratically rather than linearly in each of the pure phases $\{u=\pm1\}$, while equation~\eqref{eq:Zhou.q} turns into an ordinary differential equation describing the relaxation of bulk stress to the equilibrium state $q\equiv0$ at an exponential rate with decay constant $\frac{1}{\tau(u)}$.
 If  $A\not\equiv0$, the second-order term arising on the right-hand side of~\eqref{eq:Zhou.q} is needed to ensure the thermodynamic structure of the PDE system. Notice that for $A\not\equiv0$ the system~\eqref{eq:Zhou} is strongly coupled of cross-diffusion type. Further note that the diffusive fluxes in~\eqref{eq:Zhou.u} and~\eqref{eq:Zhou.q} are linearly dependent, so that the system~\eqref{eq:Zhou} cannot be fully parabolic.  
 Let us also mention that, numerically, the simplified model~\eqref{eq:Zhou} is still able to capture the phenomena of volume shrinking and phase inversion, cf.~\cite{ZZE_2006,STDL_2019_energy-stable}.
 
In the present work, we wish to investigate the late-stage evolution of a class of degenerate Cahn--Hilliard models for VPS motivated by~\eqref{eq:Zhou} in the limit of vanishing interface thickness. 
The late-stage evolution represents the most stable regime of the dynamics, beyond the equilibrium problem. Therefore, it is a natural starting point when trying to understand the geometric properties underlying the dynamics of VPS. 
Our goal is to formally identify the geometric flow that governs the evolution, once distinct interfaces have formed.
 A particular interest lies in understanding the effect of the cross-diffusive coupling and the linear dependence of the diffusion fluxes on the asymptotic analysis and the resulting effective interface evolution law.
 
 \paragraph{Interface dynamics in degenerate Cahn--Hilliard equations.} 
 The first work relating a Cahn--Hilliard model to a sharp-interface evolution law is due to Pego~\cite{Pego_1989}. 
 He considered the Cahn--Hilliard equation with constant mobility and a smooth double-well potential, and studied the formal asymptotics for vanishing interface width $\ve\downarrow0$ along different time scalings. 
 Most notably, he showed that, on the slow time scale $t\mapsto \ve t$, the motion of the limiting interface agrees with the Mullins--Sekerka flow. This finding was made rigorous by Alikakos, Bates, and Chen~\cite{ABC_1994} for sufficiently smooth solutions. 
 Cahn, Elliott, and Novick--Cohen~\cite{CEN_1996} were the first to perform the sharp-interface asymptotics for the Cahn--Hilliard equation with degenerate mobility. They studied the physically well-grounded case with the free energy~\eqref{eq:log.pot} involving a logarithmic singular potential and the linearly degenerate mobility $m(u)=1-u^2$ on the time scale $t\mapsto \ve^2 t$ and with vanishing temperature $\theta=O(\ve^\alpha)$, $\alpha>0$, obtaining the surface diffusion flow
 \begin{align}\label{eq:sd}
 	\sfV_\Gamma=-\frac{\sigma}{\delta}\,\Delta_\Gamma \kappa_\Gamma,
 \end{align}
where $\frac{\sigma}{\delta}=\tfrac{\pi^2}{16}>0$ (cf.~\eqref{eq:sig-del.u}), 
as the geometric law governing, at leading order, the interface evolution.
 In~\eqref{eq:sd}, $\sfV_\Gamma$ denotes the scalar normal velocity and $\kappa_\Gamma$ the mean curvature of the moving interface $\Gamma=\cup_{t\in\iv}\{t\}{\times}\Gamma(t)$ (for details, see \Cref{sec:prelim}), while $\Delta_\Gamma$ denotes the Laplace--Beltrami operator. 
 The authors of~\cite{CEN_1996} further show that the law~\eqref{eq:sd} can equally be obtained for a simplified degenerate Cahn--Hilliard model, where the logarithmic potential with small temperature is replaced by its deep quench limit, the double-obstacle potential. 
The idea in these asymptotics is that, on the slow time scale $t\mapsto \ve^2t$, solutions to the Cahn--Hilliard equation should mimic, at leading order, the asymptotic behaviour as $\ve\downarrow0$ of the minimisers of the free energy:
 letting $w_\ve^*=-\ve^2\Delta u_\ve+f'(u_\ve)\in\mathbb{R}$ denote the chemical potential associated to a minimiser of the volume-constrained Cahn--Hilliard free energy $F_\ve$, which acts as a Lagrange multiplier,
 it is a classical result that, asymptotically as $\ve\downarrow0$,
 \begin{align}\label{eq:quasistat}
 	w_\ve^*=\ve \frac{\sigma}{\llbracket u\rrbracket}\kappa+o(\ve),\qquad \sigma=
 	\int_{-1}^{+1}\sqrt{2f(u)}\,\dd u,\qquad\llbracket u\rrbracket=2.
 \end{align}
 See~\cite{LM_1989} for smooth double-well potentials $f(u)$, and \cite{BE_1991} for the non-smooth case.
For the degenerate Cahn--Hilliard equation with logarithmic potential $f(u)$ as in~\eqref{eq:log.pot}, the formal asymptotics in~\cite{CEN_1996} even entail the quantitative asymptotic behaviour~\eqref{eq:quasistat} for the inner solution.
It should be noted that the problem of a rigorous derivation of the surface diffusion flow as the sharp-interface limit of a degenerate Cahn--Hilliard equation is still open.
 
 The choice of the mobility in degenerate Cahn--Hilliard equations is well-known to be able to impact the precise structure of the formal effective interface law, and a subtle interplay 
 between mobility and potential has been observed~\cite{GSK_2008}, not necessarily leading to pure surface diffusion in the sharp-interface asymptotics.
 See also~\cite{LMS_2016_sharp}, where an additional bulk-diffusion term (of lower differential order) was observed numerically and through asymptotic analysis in the interfacial dynamics. In the present paper, we investigate the effect of a rank-deficient matrix-valued degenerate mobility inducing a cross-diffusive coupling to the scalar variable $q$ on the sharp-interface evolution law,
  where the singular part of the bulk free energy part is chosen to be the double-obstacle potential.

\paragraph{Outline of this manuscript.}
In \Cref{ssec:Onsager} we identify the formal gradient-flow structure of the diffuse-interface problem~\eqref{eq:Zhou}, which we use as a basis for introducing generalisations of the model~\eqref{eq:Zhou}. Our findings on the geometric evolution laws governing the sharp-interface dynamics for two variants of the model~\eqref{eq:Zhou}, obtained for constant resp.\ for strictly monotonic coupling, 
are summarised in~\Cref{ssec:results}.
Section~\ref{sec:prelim} introduces suitable parametrisations and coordinate transformations needed in the formal asymptotic analysis.
Sections~\ref{sec:asymptotics},~\ref{sec:constr-elliptic}, and~\ref{sec:geom.law} comprise the main contributions of this work. In Section~\ref{sec:asymptotics}, the sharp-interface asymptotic expansions are performed.
For non-constant monotonic cross-diffusive coupling the asymptotic analysis at third order leads to a constrained second-order elliptic equation in tangential and normal variables (cf.\ Section~\ref{sssec:fsd}), which to the authors' knowledge is new and does not usually occur in sharp-interface asymptotic analyses. An (independent) rigorous well-posedness analysis of the constrained elliptic equation is developed in Section~\ref{sec:constr-elliptic}. As a consequence, we obtain an abstract characterisation of the propagation operator inducing the interface dynamics (cf.\  \Cref{sssec:varchar}).
\Cref{sec:geom.law} is devoted to a structural analysis of the geometric evolution law derived in \Cref{sssec:varchar}. First, based on the rigorous framework in \Cref{sec:constr-elliptic}, we establish the formal gradient-flow structure in the sense of proving symmetry and positivity of the propagation operator (cf.\ \Cref{ssec:nonlin.str}). Subsequently, in \Cref{ssec:symbol}, we focus on explicitly identifying the (leading-order contribution of the) propagation operator.
Relying on spectral and semi-explicit ODE methods, 
we here mostly focus on a specific class of coefficient functions, where bulk modulus and relaxation time are linked to the mobility function.
Generalisations and the investigation of more singular models closer to~\eqref{eq:Zhou} will be left to future research.
Finally, in \Cref{ssec:vanishing.slope}, we show how the two different interface evolution laws derived in Sections~\ref{sec:asymptotics} resp.\ in  Sections~\ref{sec:asymptotics}--\ref{sec:geom.law} for constant resp.\ for strictly monotonic coupling, are formally connected in a singular limit by considering coupling functions with small positive slope.
Some auxiliary geometric identities and transformation rules are recalled in Appendix~\ref{app:diffgeo}.

\subsection{Onsager gradient-flow structure}\label{ssec:Onsager}

The model~\eqref{eq:Zhou} belongs to a class of dissipative evolution equations  characterised by a formal Onsager structure~\cite{Onsager_1931_recipr_I,Mielke_2013_bulk-interface}
\begin{subequations}\label{eq:Ons}
\begin{align}\label{eq:OnsStr}
	\dot{\boldu}=-\mathcal{K}(\boldu)DH(\boldu),
\end{align}
where $H$ denotes the driving functional acting on the state variables $\boldu$, and $DH$ an appropriate differential. 
The linear map $\mathcal{K}=\mathcal{K}(\boldu)$ is the so-called Onsager operator,  a symmetric positive semi-definite operator at each point $\boldu$ in state space.
In the context of~\cite{ZZE_2006}, the state $\boldu=(u,q)$ 
consist of an order parameter $u$ and a quantity $q$ related to bulk stress, while
the driving functional $H(\boldu)=H(u,q)$ is of the form
\begin{align}\label{eq:free.energy}
	H(u,q)=F(u)+\int_\Om\frac{q^2}{2}\,\dd x,
\end{align}
where 
\begin{align}\label{eq:F.gen}
	F(u)=F_\ve(u)=\int_\Om\big(\frac{\ve^2}{2}|\nabla u|^2+f(u)\big)\,\dd x,
\qquad\quad\nabla u\cdot\npo=0, \;\;x\in\pa\Om,
\end{align}
for $\ve>0$. In the notation below, 
 we identify $DH$ with the $L^2$-gradient of $H$, so that 
$DH(u,q)\simeq\left(\begin{matrix}\frac{\delta F}{\delta u}\\q\end{matrix}\right)$.
Then, the Onsager operator takes the form
\begin{align}\label{eq:defOnsager}
	\mathcal{K}(u,q) \square= -N_1(u)^T\divv \big(\mathsf{M}(u)\nabla(N_1(u)\square)\big) + \mathsf{L}(u)\square
\end{align}
with no-flux boundary conditions $\mathsf{M}(u)\nabla(N_1(u)\square)\cdot\npo=0$ for $x\in\pa\Om$,
where $\mathsf{M}(u),\mathsf{L}(u)\in \mathbb{R}^{2{\times}2}_\text{sym}$ are positive semi-definite, and $N_1(u)\in\mathbb{R}^{2{\times}2}$. 
\end{subequations}
Notice that $\mathcal{K}(\boldu)$ is indeed formally symmetric and positive semi-definite with respect to $L^2(\Om)$.
The matrices $N_1(u), \mathsf{L}(u)$ are chosen such that the invariance property $\mathcal{K}(\boldu)\begin{pmatrix}
	1_\Om\\0
\end{pmatrix}\equiv0$ is fulfilled, where $1_\Om$ denotes the constant function, equal to $1$, on $\Om$.

Symmetry and positivity of $\mathcal{K}$ imply,  along suitably regular solution trajectories $\boldu=\boldu(t)$, the entropy--entropy-dissipation identity
\begin{align*}
	\frac{\dd}{\dd t}H(\boldu) = -\mathcal{D}(\boldu),
\end{align*}
where for $\boldu=(u,q)$ 
\begin{align*}
	\mathcal{D}(\boldu) = \int_\Om\nabla (N_1(u)DH(\boldu)):\mathsf{M}(u)\nabla(N_1(u)DH(\boldu))\,\dd x+
	\int_\Om DH(\boldu)\cdot\mathsf{L}(u)DH(\boldu)\,\dd x\ge0.
\end{align*}
The invariance property, in turn, combined with $\mathcal{K}=\mathcal{K}^*$ entails volume conservation $\frac{\dd}{\dd t}\int_\Om u\,\dd x = 0$.

Below, we provide some examples.

\paragraph{The Zhou--Zhang--E model.} 
Let $H$ be of the form~\eqref{eq:free.energy}, \eqref{eq:F.gen} with $f$ given by~\eqref{eq:log.pot}.
Then, system~\eqref{eq:Zhou} is obtained from~\eqref{eq:OnsStr}--\eqref{eq:defOnsager} by choosing
\begin{subequations}\label{eq:zze}
\begin{align}\label{eq:M.zze}
	\mathsf{M}(u) &= N_2(u)^Tm(u)(\mathbf{1}\otimes\mathbf{1})N_2(u),\quad\text{where }\mathbf{1}=(1,1)^T,
	\\\mathsf{L}(u)&=\left(\begin{matrix}
		0&0\\0&\frac{1}{\tau(u)}
	\end{matrix}\right),\qquad \label{eq:L.zze}
	\\N_1(u)&=\diag(1,-A(u)),\quad N_2(u)=\diag(1,\frac{1}{n(u)}),\label{eq:N1N2.zze}
\end{align}
\end{subequations}
where
$$m(u)=(1-u^2)^2\tilde m(u),\qquad n(u)=1-u^2.$$
Observe that the $2{\times}2$-mobility matrix $\mathsf{M}(u)$ in~\eqref{eq:M.zze} is singular of rank one for all $u\in(-1,1)$.

\paragraph{A special variant.}
Taking $n(u)\equiv 1$ in~\eqref{eq:Ons},~\eqref{eq:zze} yields the PDE system
\begin{align*}
	\partial_tu&=-\divv(m(u)\boldsymbol{j}),
	\\\partial_tq&=A(u)\divv(m(u)\boldsymbol{j})-\frac{1}{\tau(u)}q,
\end{align*}
where $\boldsymbol{j}=\nabla(\frac{\delta F}{\delta u}-A(u)q)$.
Equivalently, this PDE system can be written in the form
\begin{align*}
	\begin{split}
	\partial_tu&=-\divv(m(u)\boldsymbol{j}),
	\\\partial_tz&=-\frac{1}{\tau(u)}q,\qquad z:= q+R(u),\qquad \boldsymbol{j}=-\nabla(\frac{\delta F}{\delta u}-A(u)q),
	\end{split}
\end{align*}
where $R'=\frac{A}{n}$, which exposes the hyperbolic/ODE-like features of viscoelasticity.

\paragraph{Double-obstacle potential.}
A primary purpose of this article is to understand the effect of the dissipation mechanism~\eqref{eq:defOnsager}--\eqref{eq:zze} on the sharp-interface asymptotics in~\eqref{eq:Ons}.
To focus on the main ideas, we will directly work with the deep quench limit of the logarithmic entropy function in~\eqref{eq:log.pot}. 
Thus, we consider~\eqref{eq:free.energy} with a Cahn-Hilliard free energy $F=F_\ve^{\rm\textsc{(DO)}}$~\eqref{eq:F.gen}, where $f=	f^{\rm\textsc{(DO)}}$ is of double obstacle type
\begin{align*}
	f^{\rm\textsc{(DO)}}(u)=\iota_{[-1,1]}(u)+\frac12(1-u^2),\qquad\iota_{[-1,1]}(u)=\begin{cases}
		0&\text{ if }u\in[-1,1],
		\\+\infty&\text{ if }u\in\mathbb{R}\setminus[-1,1].
	\end{cases}
\end{align*}
As will be detailed in Section~\ref{sec:asymptotics}, the double-obstacle potential turns the diffuse-interface model into a free-boundary problem.
The global existence of weak solutions to the Cahn--Hilliard equation ($A\equiv0$) with double-obstacle potential in the case of a constant mobility has been established in~\cite{BE_1991}. 
For results concerning degenerate mobilities, we refer to~\cite{EG_1996,BL_2022_vps_degmob}.

\subsection{Main results}\label{ssec:results}
Our basic strategy is to adapt the approach of~\cite[Section~3]{CEN_1996} involving the double-obstacle potential to a class of Onsager-type VPS models~\eqref{eq:Ons},~\eqref{eq:zze}. After rescaling to the appropriate slow time scale, $t\mapsto \ve^2 t$, the equations for $(u,q,w)=(u_\ve,q_\ve,w_\ve)$ take the form
	\begin{subequations}\label{eq:vpsn}
	\begin{align}
		\label{eq:vpsn.u}
		&\ve^2\partial_tu=-\divv\big(m(u)\,\boldsymbol{j}\big),
		\qquad \boldsymbol{j}=-\Big[\nabla w 
		-\frac{1}{n(u)}\nabla (A(u)q)\Big],
		\quad&& t>0,x\in\Om,
		\\&\ve^2\partial_tq=-\frac{1}{\tau(u)}q
		+A(u)\divv\Big(\tfrac{m(u)}{n(u)}\boldsymbol{j}\Big),
		&& t>0,x\in\Om,
		\label{eq:vpsn.q}
		\\[2mm]	&m(u)\boldsymbol{j}\cdot\npo=0,\qquad\nabla u\cdot\npo=0,\;\qquad&& t>0,x\in\pa\Om,
	\end{align}
	where 
	\begin{align}
		w\in\pa_u F_\ve^{\rm\textsc{(DO)}}=-\ve^2\Delta u-u+\pa\iota_{[-1,1]}.
	\end{align}
	\end{subequations}
	
The mobility function $m$  is assumed to degenerate precisely in the two pure phases $\{u=\pm1\}$. In our asymptotic analysis, we restrict to linearly degenerate mobilities of the form
\begin{enumerate}[label=(m\arabic*)]
	\item\label{hp:m.full} $m(u)=(1-u^2)\tilde m(u)$, where  $\tilde m\in C^\infty([-1,1])$ with $\min_{[-1,1]} \tilde m>0$,
\end{enumerate}
which is the classical choice when combined with the logarithmic singular potential. 

The relaxation time $\tau$ and the bulk modulus $A$ are chosen in such a way that 
\begin{enumerate}[label=(\text{$\tau$}\arabic*)]
	\item\label{hp:tauA.reg}  $\tau, A\in C^\infty([-1,1])$ with 
		$A^2\tau>0$ on $(-1,1)$.  
\end{enumerate}\medskip 

\noindent
Our formal asymptotics are based on the assumption of a well-defined smooth interface motion in the limit $\ve\to0:$
\begin{enumerate}[label=(G\arabic*)]
	\item \label{hp:Gamma}
	There exists a non-trivial compact time interval $I\subset\mathbb{R}_{\ge0}$ such that
	the zero level set $\Gamma_\ve\Subset I\times\Om$ of $u_\ve:I\times\Om\to\mathbb{R}$ approaches an evolving 
	hypersurface $\Gamma=\cup_{t\in\iv}\{t\}{\times}\Gamma(t)\Subset I\times\Om$ with the property that, for all $t\in \iv$, $\Gamma(t)\Subset\Om$ is a smooth, closed, connected, and embedded hypersurface smoothly varying in $t\in I$.
	\\Furthermore, the order parameter $u_\ve$ converges, as $\ve\to0$, to a pointwise limit $u=u(t,x)\in\{\pm1\}$ in $\iv\times\Omega$, and with $\Om^\pm(t):=\{u(t,\cdot)=\pm1\}$ it holds that 
	\[\Om=\Om^-(t)\cup \Gamma(t)\cup \Om^+(t),\qquad t\in\iv.\]
\end{enumerate}
Throughout this article, by a \textit{closed} hypersurface we mean a $(\dimx-1)$-dimensional differentiable submanifold of $\mathbb{R}^\dimx$ that is topologically compact and without boundary.

The starting point of our asymptotic analysis is the assumption that the late-stage evolution captured along the slow time scale inherits property~\eqref{eq:quasistat} \textit{at leading order} in the sense that, to leading order, the chemical potential vanishes across the interface. While this hypothesis leads to a consistent asymptotic analysis, we leave it open whether or not it may be deduced from our set-up.
Let us point out that, in the pure Cahn--Hilliard case, a similar assumption was made in~\cite[Section~3]{CEN_1996}.
We emphasise that the quantitative property~\eqref{eq:quasistat} up to first order cannot be expected for the present VPS models unless $A\equiv0$.  
Finally, let us note that it is not necessary to impose an analogous stationarity assumption on the leading order contribution of the bulk stress variable, whose $O(\ve)$ behaviour is a consequence of our asymptotics procedure.  

Depending on the choice of the coupling function $n=n(u)$, our asymptotic analysis leads to two different non-local lower-order variants of the surface diffusion flow:

\paragraph{Intermediate surface diffusion.} 
The following result is a by-product of our asymptotics.
\begin{thm}[Intermediate surface diffusion]
	\label{ass:isd}
	Consider~\eqref{eq:vpsn} assuming~\ref{hp:m.full},~\ref{hp:tauA.reg}, and $n\equiv1$ 
	on the model coefficients and~\ref{hp:Gamma} on the limiting geometry.
	Then,  as $\ve\downarrow0$, the formal sharp-interface asymptotics lead to the 
	intermediate surface diffusion flow
	\begin{align}\label{eq:isd.main}
		\sfV_\Gamma=-\sigma\Delta_\Gamma\big(\delta\,\text{\normalfont Id}-\om\Delta_\Gamma\big)^{-1}\kappa_\Gamma,
	\end{align}
	where 
	\begin{subequations}\label{eq:sig-del-om.u}
		\begin{align}\label{eq:sig-del.u}
			&\sigma=\int_{-1}^{+1}\sqrt{(1-u^2)}\,\dd u=\int_{-1}^{+1}\sqrt{2f^{\rm\textsc{(DO)}}(u)}\,\dd u,
			\quad
			\delta=4\Big(\int_{-1}^{+1}\frac{m(u)}{\sqrt{1-u^2}}\,\dd u\Big)^{-1},
			\\\label{eq:om.u}
			&\omega=\int_{-1}^{+1}A^2(u)\tau(u)\sqrt{1-u^2}\,\dd u.
		\end{align}	
	\end{subequations}
\end{thm}
The intermediate surface diffusion flow~\eqref{eq:isd.main}
was introduced by Cahn and Taylor~\cite{CahnTaylor_1994_surfaceDiffusion,TaylorCahn_1994_anisotropic} as a volume-preserving and area-decreasing geometric evolution connecting the classical volume-preserving mean-curvature flow ($\delta\downarrow0$) to the surface diffusion flow ($\om\downarrow0$). It is the formal gradient flow of the surface area functional with respect to a metric structure induced by the weighted sum of the (volume-preserving) $\dot L^2(\Gamma)$ and $\dot H^{-1}(\Gamma)$.
Elliott and Garcke~\cite{EG_1997_surface_motion} proposed the viscous degenerate Cahn--Hilliard equation as its diffuse-interface counterpart, see also~\cite{CahnTaylor_1994_surfaceDiffusion,TaylorCahn_1994_anisotropic}. In view of \Cref{ass:isd}, the model~\eqref{eq:vpsn} with $n\equiv1$ provides an alternative phase-field approximation.
Heuristically, the  viscous degenerate Cahn--Hilliard equation can be obtained from the viscoelastic Cahn--Hilliard model~\eqref{eq:vpsn} with $n\equiv 1$ and $A\equiv A_0>0,\tau\equiv\tau_0>0$ in the regime $\tau_0\ll1$ with $A^2_0\tau_0\sim 1$.

The first existence result under a smallness condition (short time or close to a steady state) for the intermediate surface diffusion flow was  obtained in~\cite{EG_1997_surface_motion} for planar curves by means of energy estimates.
In the general multi-dimensional case,  well-posedness results under smallness were established by Escher and Simonett~\cite{ES_1999_abstractParabolic} for smooth, closed, embedded, connected hypersurfaces based on tools from maximal parabolic regularity and analytic semigroups.  
The rigorous singular limit, locally in time, towards the volume-preserving mean-curvature flow 
was performed in~\cite{EGI_2001_limiting_isd_curves,EGI_2002_limiting_isd}. We refer to~\cite{EscherIto_2004_intermediate} for a review and further qualitative properties of the intermediate surface diffusion flow.

\paragraph{Fractional surface diffusion.} The main result of this work pertains to strictly monotonic coupling functions $n=n(u)$ satisfying 
\begin{enumerate}[label=(n\arabic*)]
	\item\label{hp:nm}  $n\in C^\infty([-1,1])$ with $\min_{[-1,1]} |n|>0$ 
	\item\label{hp:n.ndeg} $\min_{[-1,1]}|n'|>0$
\end{enumerate}
under the condition that 
\begin{enumerate}[label=(\text{$\tau$}\arabic*)]\setcounter{enumi}{1}
	\item\label{hp:am} the function $a(u):=\Big(\frac{1}{A^2\tau}\big(\frac{n^2}{n'}\big)^2\Big)_{|u}\,\frac{1}{1-u^2}$ satisfies $a(u)=\frac{\tilde a(u)}{m(u)}$ 	for some $\tilde a\in C^\infty([-1,1])$ with $\min_{[-1,1]}\tilde a>0$.
\end{enumerate}
Hypothesis~\ref{hp:n.ndeg} is complementary to the case $n\equiv 1$, where this condition is clearly violated.
Under hypotheses~\ref{hp:m.full}, \ref{hp:tauA.reg}, \ref{hp:nm}, and~\ref{hp:n.ndeg}, assumption~\ref{hp:am} essentially means that $A^2\tau\sim1$. 

Below, for a smooth closed connected embedded hypersurface $\Sigma$ we let $\{\ee_k\}_{k\in \mathbb{N}}$ denote an orthonormal basis of $\dot L^2(\Sigma):=\{\sfh\in L^2(\Sigma):\av_{\Sigma} \sfh\,\dd\mathcal{H}^{\dimx-1}=0\}$ composed of eigenfunctions of the minus Laplace--Beltrami operator $-\Delta_{\Sigma}$ with associated eigenvalues $0<\lambda_1\le\lambda_2\le\dots$ satisfying $\lambda_k\to\infty$ (see also \Cref{ssec:regularity}). 
We set $\Lambda:=\{\lambda_k:k\in\mathbb{N}\}$.

\begin{thm}[Square-root minus Laplace--Beltrami]
	\label{ass:fsd}
	Consider~\eqref{eq:vpsn}, and assume hypotheses~\ref{hp:m.full}, \ref{hp:tauA.reg}, \ref{hp:nm}, \ref{hp:n.ndeg}, and~\ref{hp:am} on the model coefficients and~\ref{hp:Gamma} on the limiting geometry. Further, let $\sigma,\delta$ be as in~\eqref{eq:sig-del.u}.
	Then, the sharp-interface asymptotics lead to fractional versions of the surface diffusion flow
	\begin{align*}
		\sfV_\Gamma=\mathcal{G}_\Gamma\kappa_\Gamma.
	\end{align*}
	For any smooth closed connected embedded hypersurface $\Sigma$,  $\mathcal{G}_\Sigma$ is an unbounded linear operator with respect to $L^2(\Sigma)$ enjoying the following properties
	(cf. Sections~\ref{sec:asymptotics}--\ref{sec:geom.law}):
		\begin{itemize}
		\item 
		{\normalfont Curvature flow:} $\mathcal{G}_\Sigma$ is symmetric and positive 
		\item {\normalfont Volume preservation:}  $\mathcal{G}_{\Sigma}1_{\Sigma}=0$ 
		\item {\normalfont Dominance by surface diffusion:} $\mathcal{G}_\Sigma\le-\frac\sigma\delta\Delta_{\Sigma}$ 
		\item {\normalfont Representation via $-\Delta_\Sigma:$}  $\exists!$  $\zeta:\Lambda\to\mathbb{R}_{>0}$ such that
		$(\mathcal{G}_\Sigma \ee_k,\ee_l)_{L^2(\Sigma)}=\zeta(\lambda_k)\delta_{kl}$ for all $k,l\in \mathbb{N}$
		\item {\normalfont Fractional surface diffusion:} 
		Let $a(u)m(u)=1$. Then 	\begin{align}\label{eq:G=sigsqrt}
			\mathcal{G}_\Sigma=\sigma\eta\sqrt{-\Delta_\Sigma}+\sigma\mathcal{R}(\sqrt{-\Delta_\Sigma}),
		\end{align}
		with $\eta=\big((\tfrac{n(1)}{n'(1)})^2+(\tfrac{n(-1)}{n'(-1)})^2\big)^{-1}$, where $\mathcal{R}(\sqrt{-\Delta_\Sigma})$
		stands for a lower-order perturbation:
		\\the map
		$\varrho:\Lambda\to\mathbb{R}$ given by
		$\varrho(\lambda_k)=(\mathcal{R}(\sqrt{-\Delta_\Sigma})\ee_k,\ee_k)_{L^2(\Sigma)}$ 
		satisfies $\lim_{\lambda\uparrow\infty}\tfrac{|\varrho(\lambda)|}{\sqrt{\lambda}}=0$.
		\item {\normalfont Asymptotically close to $\sigma\eta\sqrt{-\Delta_\Sigma}:$}  Let $a(u)m(u)=1$ and $n(u)=\beta_0+\beta_1(u+1)$, $\beta_0,\beta_1>0$. Then
	\[(\mathcal{R}(\sqrt{-\Delta_\Sigma})\ee_k,\ee_k)_{L^2(\Sigma)}\to0\;\;\text{ exponentially
		 as }\sqrt{\lambda_k}\to\infty.\] 
   Consequently, the pseudo-differential operator $\mathcal{R}$ has order $-\infty$.
	\end{itemize}
\end{thm}
Rigorous statements concerning the construction and properties of the operator $\mathcal{G}_\Gamma$ are provided in \Cref{prop:solveP.vn} (abstract definition), \Cref{prop:gs} (curvature flow), and~\Cref{prop:spectral} (PDE structure).

It appears that the geometric flow induced by fractional versions of
 the surface Laplacian, 
  even in the case of the square root minus Laplace--Beltrami as the propagation operator, has so far not been investigated systematically in the literature.
We expect that the local existence and uniqueness of classical solutions may be obtained, for instance,
by an adaptation of the maximal parabolic regularity approach~\cite{EMS_1998_surface-diffusion,ES_1999_abstractParabolic}.

\paragraph{Connecting the two laws.}
Observe that, with regard to differential order, there is an apparent discontinuity between the interface evolution laws in \Cref{ass:fsd} (second order) and \Cref{ass:isd} (third order). 
In \Cref{ssec:vanishing.slope}, we will show that the special case considered in \Cref{ass:isd} with $n\equiv1$ can formally be recovered from the laws derived in \Cref{ass:fsd} by taking the singular limit $\eps\downarrow0$ in a family of problems involving coupling coefficients $n_\eps$ with small positive slope $\eps$. A summary of this result is provided in the following remark. 
\begin{remark}[Intermediate surface diffusion as a singular limit in the fractional third-order laws]
	\label{ass:fsd->isd}
Let the hypotheses of \Cref{ass:fsd} be in force and let $m(u)=(1-u^2)\tilde m(u)$ be even. Consider the family of coupling functions $n=n_\eps$ satisfying
\begin{align*}
n_\eps(u)=1+\eps u, \qquad u\in[-1,1],
\end{align*}
with $0<\eps\ll1$.
Further let $A^2_\ve\tau_\eps=n_\eps^4\tilde m$, so that $a_\eps=\frac{\eps^{-2}}{m}$.
Let $\mathcal{G}_{\eps,\Sigma}$ denote the propagation operator of the interface law derived in \Cref{ass:fsd} and set $\zeta_\eps(\lambda_k):=(\mathcal{G}_{\eps,\Sigma}\ee_k,\ee_k)_{L^2(\Sigma)}$.
 Then, 
	\[\zeta_\eps(\lambda)=\sigma\lambda\big(\delta+\om\lambda+O_R(\eps)\big)^{-1} \qquad \text{if }\lambda=\lambda_k\le R, \]
	as long as $0<\eps\ll_R1$.
	Here, the coefficients $\delta$ and $\omega$ are identical to those in \Cref{ass:isd} for the given, $\eps$-independent functions $m(u)=(1-u^2)\tilde m(u)$, $n\equiv 1$, and $A^2\tau=n^4\tilde m=\tilde m$.
\end{remark}
Thus, loosely speaking, at low frequencies we recover the propagation operator associated to the intermediate surface diffusion flow in the sense that, on compact subsets in frequency space and for $0<\eps\ll1$,
\[\mathcal{G}_{\eps,\Gamma}=\;{\text{\Large$^{\mlq\mlq}$}}\;-\sigma\Delta_\Gamma\big(\delta\,{\rm Id}-\om\Delta_\Gamma+O_{\Delta_\Gamma}(\eps)\big)^{-1}{\text{\Large$^{\mrq\mrq}$}}\;,\]
where $O_{\Delta_\Gamma}(\eps)$ stands for an (unbounded) linear operator that converges to zero as $\eps\downarrow0$, at least linearly, on finite linear  combinations of the basis functions $\{\ee_k\}_{k\in \mathbb{N}}$.

For the precise closed formula for $\zeta=\zeta_\eps(\lambda)$ in the setting of \Cref{ass:fsd->isd}, we refer to equation~\eqref{eq:zeta.to.isd.exact}.

\section{Preliminaries}\label{sec:prelim}

In this preparatory section, we introduce the coordinate transformations and geometric identities needed in the formal asymptotic analysis. 
The setting chosen below is motivated by the following. We expect that for $0<\ve\ll1$ the phase field component $u_\ve$ of the solution to~\eqref{eq:vpsn} changes from one phase to the other on a thin interfacial layer of width $\sim\ve$.
In the transition layer, which lies in the vicinity of the limiting interface $\Gamma$ (cf.~\ref{hp:Gamma}), we introduce new coordinates mostly following~\cite{AGG_2012}.

\subsection{Evolving interface}

For a non-trivial compact time interval $I\subset\mathbb{R}_{\ge0}$, consider a finite family of smooth local parametrisations $\gamma_\param:I\times \mathcal{O}_\param\to\mathbb{R}^\dimx$ with $\mathcal{O}_\param\subset\mathbb{R}^{d-1}$ open and 
 $\gamma_\param(t,\cdot):\mathcal{O}_\param\to\gamma_{[\alpha]}(t,\mathcal{O}_\param)\subset \Gamma(t)$  a diffeomorphism  for every
$1\le\alpha\le N$ such that 
 $\Gamma(t)=\cup_{1\le\alpha\le N}\gamma_\param(t,\mathcal{O}_\param)$  for all $t\in I$.
In the following, we let $\alpha\in\{1,\dots,N\}$ be fixed but arbitrary and abbreviate $\gamma:=\gamma_\param$,  $\mathcal{O}:=\mathcal{O}_\param$.
Unless stated otherwise, geometric quantities of the evolving interface $\cup_{t\in I}\{t\}\times\big(\Gamma(t)\cap\gamma(t,\mathcal{O})\big)$ will be considered as functions on $I\times\mathcal{O}$ by means of the parametrisation $\gamma$. The unit normal field to $\Gamma(t)$ pointing towards $\Om^+(t)$ will be denoted by $\nu(t,\cdot):\mathcal{O}\to \mathbb{R}^d$.
Then the (scalar) normal velocity $V:I\times\mathcal{O}\to\mathbb{R}$ of the evolving interface $\Gamma$ is defined via (see e.g.\ \cite[Chapter 2.2.5]{PruessSimonett_2016}) \[V=\partial_t\gamma\cdot\nu.\]
Let $d(t,\cdot):\Om\to\mathbb{R}$ denote the signed distance function to $\Gamma(t)$, with the convention that $d>0$ in the phase $\Om^+=\{u=+1\}$.
Then there exists $\ol d>0$ such that $d(t,\cdot)$ is smooth 
 in the $\ol d$-tubular neighbourhood $\mathcal{N}_{\ol d}(t):=\{|d(t,\cdot)|<\ol d\}\Subset\Om$ of $\Gamma(t)$ for all $t\in I$ and such that  on $\mathcal{N}_{\ol d}(t)$ the orthogonal projection $\proj_{\Gamma(t)}$ onto $\Gamma(t)$ is well-defined.
 We note the following basic identities for $x\in\mathcal{N}_{\ol d}(t)$ (see e.g.\ \cite{Ambrosio_2000,BMST_2022_M3AS_sd}):
 \begin{align}\label{eq:signed-dist.bulk}
 	\nabla_x d(t,x)=\nu_{\Gamma}(t,\proj_{\Gamma(t)}(x)),\qquad
 	\pa_td(t,x)=-\sfV_{\Gamma}(t,\proj_{\Gamma(t)}(x)),
 \end{align}
 where $\nu_{\Gamma}:\Gamma\to\mathbb{R}^d$ denotes the unit normal field to $\Gamma$ determined by $\nu_\Gamma(t,\gamma(t,s))=\nu(t,s)$, 
 and
 $\sfV_{\Gamma}:\Gamma\to\mathbb{R}$ the normal velocity of the moving interface related to $V$ by $\sfV_{\Gamma}(t,\gamma(t,s))=V(t,s)$.
Below, by $\kappa_\gamma$ we denote the (scalar) mean curvature of $\Gamma$, 
i.e.\ the sum of its principle curvatures, considered as a function on $I\times \mathcal{O}$, where we adopt the sign convention that $\kappa_\gamma(t,\cdot)\le0$ if $\Om^-(t)$ is convex. 
By $\kappa_\Gamma:\Gamma\to\mathbb{R}$ we denote the mean curvature of $\Gamma$, considered as a function on $\Gamma$, so that $\kappa_{\Gamma}(t,\gamma(t,s)):=\kappa_{\gamma}(t,s)$.

\subsection{Parametrisation for the bulk region}

Based on the mappings $\gamma(t,\cdot)$, we construct local parametrisations of the tubular neighbourhood $\mathcal{N}_{\ol d}(t)$ of  $\Gamma(t)$ via
\begin{align*}
	\gamma^\ve_t(s,\rho)=\gamma(t,s)+\ve\rho\nu(t,s),\quad (t,s)\in I\times\mathcal{O}, \quad \rho\in J_\ve:=(-\ve^{-1}\ol d,\ve^{-1}\ol d).
\end{align*}
Here, the rescaling $\rho=\frac{d}{\ve}$ serves to normalise, at leading order, the thickness of the interfacial transition region in the new coordinates.
We sometimes omit the dependence on the time parameter $t$, and simply write $\gamma^\ve(\cdot,\rho)$. Furthermore, we abbreviate $\gamma_{\ve\rho}=\gamma^\ve(\cdot,\rho)$, if no confusion arises with the time subscript.
Then,  the map
  \begin{align*}
G^\ve:I\times\mathcal{O}\times J_\ve\to G^\ve(I\times\mathcal{O}\times J_\ve)=:\mathcal{N},\qquad  
G^\ve(t,s,\rho)=(t,\gamma_t^\ve(s,\rho))
  \end{align*}  
  is a local parametrisation of the (spatial) $\ol d$-tubular neighbourhood $\mathcal{N}$ of $\Gamma$.
  We denote its inverse $(t,x)\mapsto (t,s,\rho)$ by
  $ (\text{id}_I,\mathfrak{S},\RD):=(G^\ve)^{-1}: \mathcal{N}\to I\times \mathcal{O}\times J_\ve.$
Thus, $\RD(t,x)=\frac{d(t,x)}{\ve}$ and, owing to~\eqref{eq:signed-dist.bulk}, we deduce
 \begin{align}\label{eq:dRdt}
 	\pa_t\RD\circ G^\ve=\ve^{-1}\pa_t d\circ G^\ve=-\ve^{-1} V.
 \end{align}
We now compute the differential operators in the new coordinates. 
For differentiable scalar functions $u=u(t,x), b=b(t,x)$, and a vectorial function $\boldsymbol{j}=\boldsymbol{j}(t,x)$, we write $U(t,s,\rho):=u(G^\ve(t,s,\rho))$, $B(t,s,\rho):=b(G^\ve(t,s,\rho))$, and $\boldsymbol{J}(t,s,\rho):=\boldsymbol{j}(G^\ve(t,s,\rho))$.  
 From~\eqref{eq:dRdt} we infer
\begin{subequations}\label{eq:diffop.traf}
\begin{align}\label{eq:diffop.traf.time}
\pa_tu\circ G^\ve &= -\ve^{-1}V\partial_\rho U+\partial_t\SL\circ G^\ve\cdot\nabla_sU+\partial_tU,
\end{align}
The following identities follow from basic geometric calculus (cf.\  Appendix~\ref{app:diffgeo}):
\begin{align}\label{eq:diffop.traf.space}
	\begin{aligned}
\nabla_xu\circ G^\ve&=\ve^{-1}\partial_\rho U\,\nu+\nablaGeps U, \qquad 
\\(\divv_x \boldsymbol{j})\circ G^\ve&=\ve^{-1}\pa_\rho \boldsymbol{J}\cdot \nu+\divGeps \boldsymbol{J},
\\\divv_x(b\nabla_x u)\circ G^\ve&=\ve^{-2}\pa_\rho(B\pa_\rho U)+\ve^{-1}B\pa_\rho U \Delta_x d\circ G^\ve+\divGeps(B\nablaGeps U),
\end{aligned}
\end{align}
\end{subequations}
and, in particular, $\Delta_xu\circ G^\ve=\ve^{-2}\partial_\rho^2 U+\ve^{-1}\Delta_x d\circ G^\ve\,\partial_\rho U+\DeltaGeps U$.
Here, $\nablaGeps U:=\nabla_{\Gamma_{\ve\rho}} u\circ G^\ve$ resp.\ $\DeltaGeps U:=\Delta_{\Gamma_{\ve\rho}} u\circ G^\ve$ denote the surface gradient resp.\ Laplace--Beltrami operator of $u$ with respect to the hypersurface ($t$-dependence omitted) 
\[\Gamma_{\ve\rho}=\{\gamma^\ve(s,\rho),\; s\in\mathcal{O}\},\]
expressed in terms of the parametrisation $(\mathcal{O},\gamma^\ve(\cdot,\rho))$.
Likewise, $\divGeps\boldsymbol{J}:=(\divv_{\Gamma_{\ve\rho}} \boldsymbol{j})\circ G^\ve$ denotes the surface divergence of $\boldsymbol{j}$ with respect to $\Gamma_{\ve\rho}$ in local coordinates.

We want to expand these operators in terms of their $\ve$-independent counterparts $\nablaG U:=\nabla_\Gamma u\circ \gamma$, $\DeltaG U:=\Delta_\Gamma u\circ\gamma$, where  $\nabla_\Gamma u$ and $\Delta_\Gamma u$ denote the surface gradient and Laplace--Beltrami operator applied to $u_{|\Gamma}:\Gamma\to\mathbb{R}$.
As shown in Appendix~\ref{ssec:diffgeo.transform}, for any smooth scalar $U=U(s,\rho)$ and vectorial $\boldsymbol{J}=\boldsymbol{J}(s,\rho)$ 
\begin{equation}\label{eq:diffop.expansion}
	\begin{split}
\nablaGeps U&=\nablaG U+
\ve\rho\sum_{i=1}^{\dimx-1} \mrem^i\pa_{s_i}U+O(|\ve\rho|^2),
\\\divGeps \boldsymbol{J}&=\divG \boldsymbol{J}
+\ve\rho\sum_{i=1}^{\dimx-1} \mrem^i\cdot\pa_{s_i}\boldsymbol{J}+O(|\ve\rho|^2),
\end{split}
\end{equation}
 for tangential fields $\mrem^i(s)$ (satisfying  $\nu\cdot\mrem^i\equiv0$), $ i=1,\dots,\dimx-1$, 
  that only depend on $\gamma$.
In particular, 
\begin{align}\label{eq:divgrad.expansion}
	\divGeps(B\nablaGeps U)=\divG(B\nablaG U)+O(|\ve\rho|).
	\end{align}
 We further note that (cf.\ Appendix~\ref{ssec:diffgeo.id})
\begin{align}\label{eq:d.Lapl}
	\Delta_xd\circ G^\ve&= -\kappa_\gamma-\ve\rho |\mathcal{W}_\gamma|^2-\ve^2\rho^2k_3^3+O(|\ve\rho|^3),
\end{align}
where $|\mathcal{W}_\gamma|=(\sum_{i=1}^{\dimx-1}\kappa_i^2)^{1/2}$ denotes the Frobenius norm of the Weingarten tensor of $\Gamma$, and $k_3^3:=\sum_{i=1}^{\dimx-1}\kappa_i^3$, where $\kappa_i$ are the principle curvatures of $\Gamma$, considered as functions on $I\times\mathcal{O}$.

In the next section, we will adapt the approach of~\cite{CEN_1996} to study the sharp-interface asymptotics of the cross-diffusion models~\eqref{eq:vpsn}. 
We caution that the authors in~\cite{CEN_1996} use a different parametrisation. 

\section{Sharp-interface asymptotics}
\label{sec:asymptotics}

In this section, we apply the method of formal asymptotic expansions to the Onsager VPS models~\eqref{eq:vpsn}. Throughout this section, we assume~\ref{hp:Gamma} and impose hypotheses~\ref{hp:m.full},~\ref{hp:tauA.reg}, and~\ref{hp:nm}. In addition, we will assume that either $n\equiv1$ (cf.\ \Cref{ass:isd}) or~\ref{hp:n.ndeg} holds (cf.\ \Cref{ass:fsd}).

 To begin with, we rewrite equation~\eqref{eq:vpsn.q} using~\eqref{eq:vpsn.u}, to obtain the formally equivalent problem
	\begin{subequations}\label{eq:phn}
	\begin{align}
		\label{eq:phn.u}
		&\ve^2\partial_tu=-\divv\big(m(u)\,\boldsymbol{j}\big),
		\qquad \boldsymbol{j}=-\Big[\nabla w 
		-\frac{1}{n(u)}\nabla (A(u)q)\Big],
		\quad&& t>0,x\in\Om,
		\\&\ve^2\partial_tz=-\frac{1}{\tau(u)}q
		+A(u)m(u)\nabla(\frac{1}{n(u)})\cdot \boldsymbol{j},\quad
		z=q+R(u), 
		&& t>0,x\in\Om,
		\label{eq:phn.q}
		\\[2mm]	&m(u)\boldsymbol{j}\cdot\npo=0,\qquad\nabla u\cdot\npo=0,\;\qquad&& t>0,x\in\pa\Om,	\label{eq:phn.bc}
	\end{align}
 where $R'=\frac An$, $R(0)=0$, and 
	\begin{align*}
		w\in\pa_u F_\ve^{\rm\textsc{(DO)}}=-\ve^2\Delta u-u+\pa\iota_{[-1,1]}.
	\end{align*}
\end{subequations}
Notice that for $n\equiv 1$, equation~\eqref{eq:phn.q} reduces to a $u$-dependent ordinary differential equation for $z$, and in fact, 
the sharp-interface analysis of~\eqref{eq:phn} turns out to be much less delicate if $n$ is constant.

\paragraph{Free boundary problem.}

We wish to study the asymptotic behaviour of solutions $(u_\ve,q_\ve,w_\ve)=(u,q,w)$ of~\eqref{eq:phn} as $\ve\downarrow0$.
Equation~\eqref{eq:phn} can formally be written as a free boundary problem, where at each point in time the domain $\Om$ is decomposed as
\begin{align*}
	\Om=\Om_\ve^-(t)\cup\Om_\ve^\sfI(t)\cup\Om_\ve^+(t),
\end{align*}
with $\Om_\ve^\pm(t):=\{u_\ve(t,\cdot)=\pm1\}$, and where for $t>0$,  $x\in\Om_\ve^\sfI(t):=\{|u_\ve(t,\cdot)|<1\}$ the unknowns $(u_\ve,q_\ve,w_\ve)$ are subject to the equations
	\begin{subequations}\label{eq:fb}
	\begin{align}
		\label{eq:fb.u}
		&\ve^2\partial_tu=-\divv\big(m(u)\,\boldsymbol{j}\big),
		\qquad \boldsymbol{j}=-\Big[\nabla w 
		-\frac{1}{n(u)}\nabla (A(u)q)\Big],
		\\&\ve^2\partial_tz=-\frac{1}{\tau(u)}q
		+A(u)m(u)\nabla(\frac{1}{n(u)})\cdot \boldsymbol{j},\quad
		z=q+R(u), 
		\label{eq:fb.q}
	\\&w=-\ve^2\Delta u-u,\label{eq:fb.w}
	\end{align}
	where $R'=\frac An$, $R(0)=0$.  We complement these equations by appropriate continuity conditions on the free boundary  $\pa\Om_\ve^\sfI(t)\cap\Omega_\ve^\pm(t)$, which take the form
\begin{align}\label{eq:match.toy.ori}
	\begin{aligned}
		&m(u)\boldsymbol{j}\cdot\nu_\ve^\sfI=0,
		\\&	u=\pm1,\quad
		\nabla u\cdot\nu_\ve^\sfI=0.
	\end{aligned}
\end{align}
Here, $\nu_\ve^\sfI$ denotes the outer unit normal field to $\Om_\ve^\sfI(t)$.
\end{subequations}

The last two conditions in~\eqref{eq:match.toy.ori} necessarily follow if $u_\ve(t,\cdot)\in C^1(\Om)$. Combined with the first line in~\eqref{eq:match.toy.ori}, these continuity conditions ensure the conservation of the quantitiy $\int_\Om u_\ve\,\dd x$ in time.
 In this paper, we study the sharp-interface asymptotics  postulating the strong formulation~\eqref{eq:fb.u}--\eqref{eq:match.toy.ori}. 
 A more detailed investigation based on the logarithmic potential will be left to future research.
\medskip

In our asymptotic analysis, we focus on a simple geometric setting  without boundary effects, assuming that $\Om_\ve^\sfI(t)\Subset\Om$ is connected and annular-like (of thickness at most $\sim\ve$), encloses the domain $\Om_\ve^-(t)$, which is supposed to be simply connected, and is separated from $\pa\Om$ by $\Om_\ve^+(t)$. Then, the conditions~\eqref{eq:phn.bc} on the outer boundary $\pa\Om$ are trivially satisfied. 
We henceforth let (cf.\ \Cref{fig:geom}) \[\Gamma_\ve^\pm(t):=\pa\Om_\ve^\sfI(t)\cap\Om_\ve^\pm(t)\qquad\text{and}\qquad\nu_\ve^{\sfI,\pm}:={\nu_\ve^{\sfI}}_{|{\Gamma_\ve^\pm}}.\]
\begin{figure}[h]
	\centering
\includegraphics{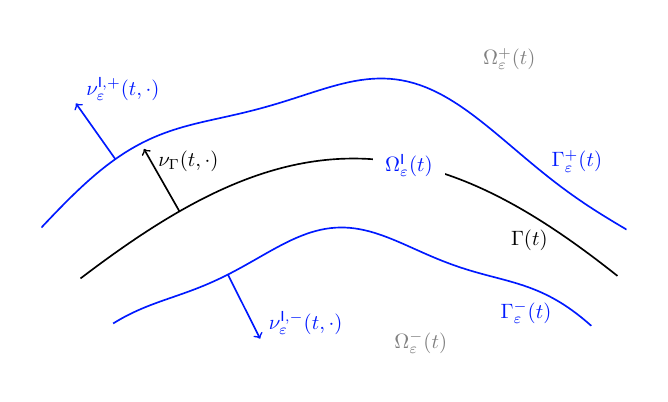}\vspace{-.1cm}
	\caption{Free transition layer $\Om_\ve^{\sfI}(t)$ and sharp interface $\Gamma(t)$.
	}\label{fig:geom}
\end{figure}

\subsection{Formulation in local reference coordinates}\label{sssec:new.coord.toy}

\paragraph{Free boundary.}
Our geometric set-up implies that the moving boundary $\pa\Om_\ve^\sfI(t)$
is composed of two connected components $\Gamma_\ve^\pm(t)$, which are part of the unknowns. 
For $\ve$ small, we can assume that $|d(t,x)|<\ol d$ for all $x\in \overline{\Om_\ve^\sfI(t)}$. Thus, in line with our setting, we may assume that, locally, each of the components $\Gamma_\ve^\pm$ can be written as a graph over $\Gamma$ in the sense that
\[\Gamma_\ve^\pm
\cap 
\mathcal{N}
=\{G^\ve(t,s,Y_\ve^\pm(t,s)): (t,s)\in I{\times}\mathcal{O}\},\qquad 
\mathcal{N}:=G^\ve(I\times\mathcal{O}\times J_\ve),\]
or equivalently
\begin{align}\label{eq:fb.explicit}
	\Gamma_\ve^\pm(t)\cap\mathcal{N}(t)=\{\gamma(t,s)+\ve Y_\ve^\pm(t,s)\nu(t,s) : s\in \mathcal{O}\}, \quad t\in I,
\end{align}
with $\mathcal{N}(t):=\gamma^\ve_t(\mathcal{O}\times J_\ve)$,
where the height functions
\[Y_\ve^\pm:I\times\mathcal{O}\to\mathbb{R}\]
are part of the unknowns.
In the reference coordinates $(s,\rho)$, the transition region $\Om_\ve^\sfI(t)$ then takes the form
\begin{align*}
	(G^\ve(t,\cdot))^{-1}(\Om_\ve^\sfI(t)\cap\mathcal{N}(t))= \{(s,\rho): s\in\mathcal{O}, \rho \in (Y_\ve^-(t,s),Y_\ve^+(t,s))\}, \quad t\in I.
\end{align*}

\paragraph{Equations in the transition layer.}

The transformation rules~\eqref{eq:diffop.traf} allow us to reformulate equations~\eqref{eq:fb.u}--\eqref{eq:fb.w} in terms of $(U,Q,W)=(u,q,w)\circ G^\ve$ as 
\begin{subequations}\label{eq:innerVPS.vn}
		\begin{align}\label{eq:ch.in.full}
			-\ve\pa_\rho U V
			+\ve^2\nabla_sU\cdot\,\pa_t\SL\circ G^\ve+\ve^2\partial_tU&=\ve^{-2}\pa_\rho(m(U)[\pa_\rho W-\frac{1}{n(U)}\pa_\rho(A(U)Q)])
			\\&\quad+\ve^{-1}m(U)[\pa_\rho W-\frac{1}{n(U)}\pa_\rho(A(U)Q)]\Delta_xd\circ G^\ve
			\nonumber\\&\quad +\divGeps(m(U)[\nablaGeps
			W-\frac{1}{n(U)}\nablaGeps(A(U)Q)]),
			\nonumber\\[2mm]\label{eq:q.in.full}
			-\ve\pa_\rho Z V
			+\ve^2\nabla_sZ\cdot\,\pa_t\SL\circ G^\ve+\ve^2\partial_tZ&= - \frac{1}{\tau(U)}Q 
			\\&\quad-\ve^{-2}A(U)m(U)\pa_\rho \big(\frac{1}{n(U)}\big)\,[\pa_\rho W-\frac{1}{n(U)}\pa_\rho(A(U)Q)]
			\nonumber\\&\quad -A(U)m(U)\nablaGeps \big(\frac{1}{n(U)}\big)\cdot[\nablaGeps W-\frac{1}{n(U)}\nablaGeps(A(U)Q)],\nonumber
		\end{align}
		where $Z=Q+R(U)$, $R'=\frac An$, and
		\begin{align}
			W&=-(\pa_\rho^2U+U)-\ve\pa_\rho U \Delta_xd\circ G^\ve-\ve^2\DeltaGeps U.\label{eq:w.in.full}
		\end{align}
\end{subequations}
These equations are to be imposed on $\{(t,s,\rho): \; Y_\ve^-(t,s)<\rho<Y_\ve^+(t,s), \; (t,s)\in I\times\mathcal{O}\}$.

\paragraph{Conditions at the free boundary.}  The continuity conditions~\eqref{eq:match.toy.ori} at the free boundary turn into conditions at $\{\rho=Y_\ve^\pm(t,s)\}$ in the reference coordinates, and take the form
\begin{subequations}\label{eq:match.full}
	\begin{align}\label{eq:ch.m.full}
		\ve^{-1}m(U) (\pa_\rho W-\frac{1}{n(U)}\pa_\rho(A(U)Q))\nu\cdot\nu_\ve^\pm+m(U)\big(\nablaGeps W-\frac{1}{n(U)}\nablaGeps(A(U)Q)\big)\cdot\nu_\ve^\pm=0,\\\label{eq:ell1.m.full}
		U=\pm1,	\\[1mm]
		\ve^{-1}\pa_\rho U\nu\cdot\nu_\ve^\pm+\nablaGeps U\cdot\nu_\ve^\pm=0,\label{eq:ell2.m.full}
	\end{align}
\end{subequations}
where $\nu_\ve^\pm(t,s):=\nu_\ve^{\sfI,\pm}(G_\ve(t,s,Y^\pm_\ve(t,s)))$ denotes the outer unit normal field $\nu_\ve^\sfI(t,\cdot)$ restricted to $\Gamma_\ve^\pm(t)$ in the local coordinates.
The equations~\eqref{eq:match.full} are to be understood in the trace sense.

In view of~\eqref{eq:fb.explicit}, $\nu_\ve^\pm$ is determined by the conditions
\begin{align}\label{eq:cond.nu.pm}
	\nu^\pm_\ve \perp \pa_{s_i}\gamma+\ve Y_\ve^\pm\pa_{s_i}\nu+\ve\pa_{s_i}Y^\pm\nu, \quad i=1,\dots,\dimx-1,\qquad |\nu^\pm_\ve|=1,\quad \pm\nu^\pm_\ve\cdot\nu\ge0.
\end{align}

\subsection{Asymptotic expansions}
We assume the following expansions of the unknowns written in the local reference coordinates $(U,Q,W)(t,s,\rho;\ve)=(u_\ve,q_\ve,w_\ve)\circ G^\ve(t,s,\rho)$ and the height functions $Y_\ve^\pm=Y_\ve^\pm(t,s)$ determining the moving boundary
\begin{align*}
	U(\cdot;\ve)&=\sum_{i\ge0}\ve^iU^i,
	\quad	Q(\cdot;\ve)=\sum_{i\ge0}\ve^iQ^i, 
	\quad	W(\cdot;\ve)=\sum_{i\ge0}\ve^iW^i,
	\\	  Y_\ve^\pm&=\sum_{i\ge0}\ve^iY_\pm^i.
\end{align*}
Thus, in view of~\eqref{eq:cond.nu.pm}, we also have expansions 
\[\nu_\ve^\pm=\nu^0_\pm
+\ve\nu^1_\pm+O(\ve^2),
\qquad \nu^0_\pm=
\pm\nu.
\] 
In particular, $\pm\nu\cdot\nu_\pm^0=1.$
The first-order corrections $\nu^1_\pm$ are determined by 
\[\nu^1_\pm\cdot\pa_{s_i}\gamma=\mp\pa_{s_i}Y^0_\pm\quad\text{for all }i=1,\dots,\dimx-1, \qquad\text{and}\quad \nu^1_\pm\cdot\nu=0.\]
We now insert the above expansions of the dependent variables in the transformed equations~\eqref{eq:innerVPS.vn}, \eqref{eq:match.full}, and then, treating $0<\ve\ll1$ as a small parameter, use Taylor expansions to separate terms of different order. Taking also into account the expansions of the differential operators in~\eqref{eq:diffop.expansion},~\eqref{eq:divgrad.expansion} as well as the identity~\eqref{eq:d.Lapl}, we then sort by orders of $\ve$. This leads to a hierarchy of linear equations for the higher-order corrections. 
Our main focus is the formal derivation of the interface evolution laws, which will
emerge at `third order'.

Before we start, let us briefly illustrate the expansion procedure for
$g(Y_\ve^\pm):=g(t,s,Y_\ve^\pm(s,t))$ 
assuming that $\rho\mapsto g(t,s,\rho)$ is smooth enough. With the above ansatz $Y_\ve^\pm=\sum_{i\ge0}\ve^iY_\pm^i$, for $0<\ve\ll1$, formal Taylor expansion of $\rho\mapsto g(t,s,\rho)$ yields
\begin{align*}g(Y_\ve^\pm)=g(Y_\pm^0)&+\ve\pa_\rho g(Y_\pm^0) Y_\pm^1
	+\ve^2\big(\pa_\rho g(Y_\pm^0)Y_\pm^2+\frac12\pa_\rho^2g(Y^0_\pm)(Y^1_\pm)^2\big)
	+O(\ve^3).
\end{align*}

\paragraph{Leading order.}Transition layer: $O(\ve^{-2}),  O(\ve^{-2}), O(1)$.  Continuity conditions: $O(\ve^{-1}), O(1),  O(\ve^{-1})$.
The starting point in the hierarchy is to assume that $W^0=0$, which can be interpreted as a quasi-stationarity condition and leads to an asymptotic analysis that is consistent with the present continuity conditions at the free boundary. 
Given this hypothesis, the leading order equations are
\begin{subequations}\label{eq:0.full}
	\begin{align}
		0&=-\pa_\rho\Big(\frac{m(U^0)}{n(U^0)}\pa_\rho (A(U^0)Q^0)\Big),\label{eq:0.full.ch}
		\\0&=A(U^0)\frac{m(U^0)}{n(U^0)}\pa_\rho \big(\frac{1}{n(U^0)}\big)\pa_\rho(A(U^0)Q^0),
		\label{eq:0.full.q}
		\\0&=-\pa_\rho^2U^0-U^0.\label{eq:0.full.w}
	\end{align}
\end{subequations}
These equations are imposed for $\rho\in(Y_-^0,Y_+^0)=:J$ and are supplemented by the leading order equations of~\eqref{eq:match.full}, to be understood in the trace sense,
\begin{subequations}\label{eq:0.m}
	\begin{align}
		-\frac{m(U^0)}{n(U^0)}\pa_\rho(A(U^0)Q^0) \,\nu\cdot\nu_\pm^0=0\quad \text{on }\{\rho=Y_\pm^0\},\label{eq:0.m.ch}\\
		\label{eq:0.m.ell1.toy}
		U^0=\pm1 \quad \text{on }\{\rho=Y_\pm^0\},\\
		\pa_\rho U^0 \,\nu\cdot\nu_\pm^0=0\quad \text{on }\{\rho=Y_\pm^0\}.\label{eq:0.m.ell2.toy}
	\end{align}
\end{subequations}
We first consider the problem for $U^0$. To this end,  recall that our hypothesis that the zero level sets of $\{U(\cdot;\ve)\}_\ve$ converge to $\Gamma$, i.e.\ to $\{\rho=0\}$,  enforces $U^0_{|\rho=0}=0$.
Combining this condition with  equations~\eqref{eq:0.full.w},~\eqref{eq:0.m.ell1.toy},~\eqref{eq:0.m.ell2.toy}, and recalling that $\nu\cdot\nu_\pm^0=\pm1$, yields the following discrete family of solutions 
\[U^0(t,s,\rho)=\sin\rho,\qquad Y_\pm^0(t,s,\rho)=\pm\pi\big(\tfrac{1}{2}+k_\pm(t,s)\big)\]
with free parameters $k_\pm(t,s)\in 2\mathbb N_0$.
Since we are assuming that $U^0=U^0(t,s,\rho), Y_\pm^0=Y_\pm^0(t,s)$ are continuous functions of their arguments, the discreteness of this family of possible solutions 
implies that $U^0$ and $Y_\pm^0$ must be constant in $(t,s)$, and hence that $k_\pm\in 2\mathbb N_0$ are fixed.
So far, our reasoning is the same as in~\cite{CEN_1996}. 	In order to extract a `unique' solution from this discrete family, one needs to impose a selection criterion. 
One possibility is to impose monotonicity of the profile $U^0$ as an extra property, as was done in~\cite{CEN_1996}.
A more fundamental, and thus perhaps preferrable criterion is that of energy minimality, see~\cite[Theorem~5]{CNH_2024}. 
Under either of these two criteria, one ends up with the same unique solution triple $(U^0,Y^0_-,Y^0_+)$ given by 
\begin{align}\label{eq:U0Y0.sol.toy}
	U^0(\rho)=\sin\rho,\qquad \rho \in J=(Y_-^0,Y_+^0),\quad Y^0_\pm=\pm\frac\pi2.
\end{align}
This further entails $\nabla_sY^0_\pm\equiv0$, and therefore $\nu^1_\pm\equiv0$.

Equation~\eqref{eq:0.full.ch} implies that 
$\frac{m(U^0)}{n(U^0)}\pa_\rho(A(U^0)Q^0)=c_0$ in $J$ for a function $c_0=c_0(t,s)$ that is independent of $\rho$. Invoking~\eqref{eq:0.m.ch},  we deduce that $c_0\equiv0$, and hence
$\frac{m(U^0)}{n(U^0)}\pa_\rho(A(U^0)Q^0)=0$ in $J$.
  Since $\frac{m(U^0)}{n(U^0)}\neq0$ for all $\rho\in(Y_-^0,Y_+^0)$ (cf.~\ref{hp:m.full},~\ref{hp:nm}), we infer that $\pa_\rho(A(U^0)Q^0)=0.$ Consequently,  
\begin{align*}
	A(U^0)Q^0= a_0,\quad \text{  where }a_0=a_0(t,s).
\end{align*}

\paragraph{First order.} 
Transition layer: $O(\ve^{-1}),  O(\ve^{-1}), O(\ve)$. Continuity conditions: $O(1), O(\ve),  O(1)$. 
The bulk equations at first order are imposed for $\rho\in J$
\begin{subequations}
	\begin{align}\label{eq:lead.inner.CH}
		0&=\pa_\rho(m(U^0)E_1),
		\\0&=-A(U^0)m(U^0)\pa_\rho \big(\frac{1}{n(U^0)}\big)E_1,\label{eq:lead.inner.q}
		\\W^1&=-\pa_\rho^2U^1-U^1+\pa_\rho U^0\kap_\gamma,\label{eq:first.n.w}
	\end{align}
\end{subequations}
where 
\begin{align*}
	E_1:=\pa_\rho W^1-\frac{1}{n(U^0)}\pa_\rho(A(U^0)Q^1+A'(U^0)U^1Q^0).
\end{align*}
They are supplemented by the appropriate continuity conditions stemming from~\eqref{eq:match.full}
\begin{subequations}
	\begin{align}
		&(m(U^0)E_1)_{|\rho=Y^0_\pm}=0,
		\label{eq:match.Flux0}\\
		\label{eq:match.U0}
		&U^1_{|\rho=Y^0_\pm}=0,\quad \big(\pa_\rho U^1+\pa_\rho^2U^0Y_\pm^1\big)_{|\rho=Y^0_\pm}=0.
	\end{align}
\end{subequations}
Here, for equation~\eqref{eq:match.Flux0}, we used the orthogonality $\nu\cdot\nabla_\gamma\equiv0$.

Equations~\eqref{eq:lead.inner.CH},~\eqref{eq:match.Flux0} imply that  
$m(U^0)E_1\equiv0$,
and thus, since $m(U^0)>0$ in $J$,
\begin{align}\label{eq:vps.zeroth.prelim}
	E_1=0.
\end{align}
This also means that~\eqref{eq:lead.inner.q} is trivially satisfied.

We next consider the linear elliptic Dirichlet problem~\eqref{eq:first.n.w},~\eqref{eq:match.U0} in $\rho$ for $U^1$ with `right-hand side' data $r^1:=W^1-\pa_\rho U^0\kap_\gamma$. By elliptic theory (cf.~\cite[Chapter~8]{GT_2001}), solvability of this problem is ensured if and only if $r^1$ is $L^2(J)$-orthogonal to the kernel of the elliptic operator $-\pa_\rho^2-\text{Id}$, which is spanned by $\pa_\rho U^0$.  This leads to the solvability condition $(\pa_\rho U^0,W^1)_{L^2(J)}-\|\pa_\rho U^0\|_{L^2(J)}^2\kap_\gamma=0$.
Abbreviating $\sigma:=\int_J(\pa_\rho U^0)^2\,\dd\rho$, it becomes
\begin{align}\label{eq:condW1.v}
	\int_J W^1\pa_\rho U^0\,\dd\rho = \sigma \,\kappa_\gamma.
\end{align}

Let us also note that the second equation in~\eqref{eq:match.U0} combined with~\eqref{eq:U0Y0.sol.toy} determines $Y_\pm^1$ in terms of $U^1$ via
\begin{align*}
	Y_\pm^1=\pm \pa_\rho U^1_{|\rho=Y^0_\pm}.
\end{align*}
Since the actual values of the higher-order corrections $Y^i_\pm$, $i\ge1$, will not be needed directly for our purpose, we will not explicitly consider~\eqref{eq:ell2.m.full} at the subsequent higher orders. 

\paragraph{Second order.} 
Transition layer: $O(1), O(1), O(\ve^2)$. Continuity conditions: $O(\ve), O(\ve^2), O(\ve)$.
Using~\eqref{eq:vps.zeroth.prelim}, we obtain the equations
\begin{subequations}
	\begin{align}
		0&=\pa_\rho(m(U^0)E_2)-\frac{m(U^0)}{n(U^0)}\DeltaG a_0,\label{eq:first.n.u}
		\\	0&= 
		-\frac{1}{\tau(U^0)}Q^0 -
		A(U^0)m(U^0)\pa_\rho(\frac{1}{n(U^0)})E_2,
		\label{eq:2.n.q}
		\\W^2&=-\pa_\rho^2U^2-U^2+\pa_\rho U^1\kappa_\gamma
		+\pa_\rho U^0\rho|\mathcal{W}_\gamma|^2,
		\label{eq:2.in.ell.full}
	\end{align}
\end{subequations}
where 
\begin{align*}
	E_2:=\pa_\rho W^2&-\frac{1}{n(U^0)}\pa_\rho\big(A(U^0)Q^2+A'(U^0)U^1Q^1+(A'(U^0)U^2+\frac12A''(U^0)(U^1)^2)Q^0\big)
	\\&+\frac{n'(U^0)}{n(U^0)^2}U^1\pa_\rho(A(U^0)Q^1).
\end{align*}
Due to~\eqref{eq:vps.zeroth.prelim}, the fact that $\nu_\ve^\pm=\pm\nu+O(\ve^2)$, and thanks to the orthogonality relation $\nu\cdot\nablaGeps\equiv0$ (in its expanded form:
 $\nu\cdot\nablaG\equiv0$, $\nu\cdot\mrem^i\equiv0$ with $\mrem^i$ as in~\eqref{eq:diffop.expansion}), the continuity condition associated to~\eqref{eq:ch.m.full}
states
\begin{align}\label{eq:2.m.ch.full.s}
	(m(U^0)E_2)_{\rho=Y^0_\pm} =0.
\end{align} 
Equation~\eqref{eq:ell1.m.full} at the relevant order states $U^2_{|\rho=Y^0_\pm}=0$,   thus complementing~\eqref{eq:2.in.ell.full}. 

Owing~\eqref{eq:2.m.ch.full.s}, integration of~\eqref{eq:first.n.u} over $\rho\in J$ 
 implies that 
\begin{align*}
	-\int_J\frac{m(U^0)}{n(U^0)}\,\dd\rho \;\DeltaG a_0(t,s)=0.
\end{align*}
Since $\frac{m(U^0)}{n(U^0)}$ has a sign (cf.\ \ref{hp:nm}) and hence $\int_J\frac{m(U^0)}{n(U^0)}\,\dd\rho\not=0$, we deduce that $-\DeltaG a_0\equiv0$.
This, in turn, combined with~\eqref{eq:first.n.u} and~\eqref{eq:2.m.ch.full.s} yields
\begin{align}\label{eq:101.vn}
	E_2=0.
\end{align}
Inserting~\eqref{eq:101.vn} into~\eqref{eq:2.n.q} and using the finiteness of $\tau$ (cf.\ hypothesis~\ref{hp:tauA.reg}), 
we thus arrive at
\begin{align*}
	Q^0=0.
\end{align*}
Equation~\eqref{eq:vps.zeroth.prelim} therefore becomes
\begin{align}\label{eq:vps.zeroth.vn}
	\pa_\rho W^1-\frac{1}{n(U^0)}\pa_\rho(A(U^0)Q^1)=0.
\end{align}

\paragraph{Third order.} 
Transition layer: $O(\ve), O(\ve), O(\ve^3)$. Continuity conditions: $O(\ve^2), O(\ve^3), O(\ve^2)$.
Using~\eqref{eq:vps.zeroth.vn} and~\eqref{eq:101.vn}, the equations~\eqref{eq:ch.in.full} and~\eqref{eq:q.in.full} at order $O(\ve)$ can be cast in the form
\begin{subequations}
\begin{align}\label{eq:2nd.vn.u*}
	-\pa_\rho U^0V&=\pa_\rho(m(U^0)E_3)+\DeltaG(m(U^0)f),
	\\\label{eq:2nd.vn.q*}	-\pa_\rho R(U^0)V&=-\frac{1}{\tau(U^0)}Q^1
	-A(U^0)\pa_\rho\big(\frac{1}{n(U^0)}\big)\, (m(U^0)E_3),
\end{align}
with $R'=\frac An$, and where we introduced
\begin{align}\label{eq:def.f}
	f:=W^1-\frac{1}{n(U^0)}A(U^0)Q^1
\end{align}
and 
\begin{align*} E_3:=
	&\pa_\rho W^3-\frac{1}{n(U^0)}\pa_\rho\big(A(U^0)Q^3+A'(U^0)U^1Q^2+(A'(U^0)U^2+\frac{1}{2}A''(U^0)(U^1)^2)Q^1\big)
	\\&+\frac{n'(U^0)}{n(U^0)^2}U^1\pa_\rho\big(A(U^0)Q^2+A'(U^0)U^1Q^1\big)
	+\bigg(\frac{n'(U^0)}{n(U^0)^2}U^2-\frac{1}{2}\left(\frac{1}{n}\right)''_{|u=U^0}(U^1)^2\bigg)	\pa_\rho(A(U^0)Q^1).
\end{align*}
For later use, we observe that
$W^1$ and $Q^1$ are uniquely determined by $f$ through the linear system
\begin{align}\label{eq:W1.f}
	W^1&=f+\frac{1}{n(U^0)} A(U^0)Q^1,
	\\\pa_\rho f&=-\pa_\rho\big(\frac{1}{n(U^0)}\big) A(U^0)Q^1
	=\frac{n'(U^0)}{n(U^0)^2}A(U^0)Q^1\pa_\rho U^0,
	\label{eq:Q1.f}
\end{align}
where in~\eqref{eq:Q1.f} we used \eqref{eq:vps.zeroth.vn} to find that $\pa_\rho f=\pa_\rho W^1-\pa_\rho(\frac{1}{n(U^0)} A(U^0)Q^1)=-\pa_\rho(\frac{1}{n(U^0)})A(U^0)Q^1$.

For completeness, we note that the equation coming from~\eqref{eq:w.in.full} states
	\begin{align*}
		W^3=-\pa_\rho^2U^3-U^3+\pa_\rho U^2\kappa_\gamma
	+\pa_\rho U^1\rho|\mathcal{W}_\gamma|^2+\pa_\rho U^0\rho^2 k_3^3.
\end{align*}
It is supplemented by $U^3_{|\rho=Y^0_\pm}=0$, which stems from the continuity condition~\eqref{eq:ell1.m.full}.

The continuity condition associated to~\eqref{eq:ch.m.full} at $O(\ve^2)$ states
\begin{align}\label{eq:3.m.ch.full.s}
	(m(U^0)E_3)_{\rho=Y^0_\pm}=0.
\end{align}
\end{subequations}

To proceed with the equations at `third order', we need to distinguish between constant coupling $n\equiv1$ and functions $n$ satisfying the complementary hypothesis~\ref{hp:n.ndeg}. 
In the remaining part of the asymptotic expansions,  we will focus on identifying the equations that 
determine the interface evolution law. 

\subsubsection{Third order for \texorpdfstring{$n\equiv 1$}{}}\label{sssec:isd}

In this paragraph, we consider the setting of \Cref{ass:isd}. In particular, we let $n\equiv1$. In this case, the identity~\eqref{eq:vps.zeroth.vn} implies that $f=f(t,s)$ is independent of $\rho$.
Thus, using~\eqref{eq:3.m.ch.full.s} and integrating~\eqref{eq:2nd.vn.u*} over $\rho\in J$, yields
\begin{align}\label{eq:V0.v}
	V=	-\frac{2}{\delta}\DeltaG f,
\end{align}
where $\delta=4(\int_Jm(U^0)\,\dd\rho)^{-1}=4(\int_{-1}^{+1}\frac{m(u)}{\sqrt{1-u^2}}\,\dd u)^{-1}$ because of $\pa_\rho U^0=\sqrt{1-(U^0)^2}$, $U^0(\rho)=\sin\rho$.

We now turn to~\eqref{eq:2nd.vn.q*}, which for $n\equiv1$ reduces to
\[	-\pa_\rho U^0A(U^0)V=-\frac{1}{\tau(U^0)}Q^1.\]
Multiplying this equation by $\tau(U^0) A(U^0)$ and substituting $W^1-f$ for $A(U^0)Q^1$ (cf.~\eqref{eq:def.f}) yields
\begin{align}\label{eq:Z.vel.1}
	\pa_\rho U^0\tau(U^0) A(U^0)^2\, V &=W^1-f.
\end{align}
We multiply~\eqref{eq:Z.vel.1} by $\pa_\rho U^0$ and integrate over $\rho\in J$. 
Combined with~\eqref{eq:V0.v} and~\eqref{eq:condW1.v}, this gives
\begin{align}\label{eq:-Lapl+1.f}
	-\frac{2\omega}{\delta}\DeltaG f+\bbU f=\sigma\kappa_\gamma,
\end{align}
where $\omega=\int_JA(U^0)^2\tau(U^0)(\pa_\rho U^0)^2\,\dd\rho=\int_{-1}^{+1}A(u)^2\tau(u)\sqrt{1{-}u^2}\,\dd u$ and 
$\bbU:=U^0(Y^0_+){-}U^0(Y^0_-)=2$. 

For a smooth closed hypersurface and any $\hat\om>0$, the linear operator 
$\sff\mapsto-\hat\om\Delta_\Gamma \sff+ \sff$
induces an isomorphism from $H^2(\Gamma)$ onto $L^2(\Gamma)$.
Hence, in global notation, equations~\eqref{eq:V0.v},~\eqref{eq:-Lapl+1.f} amount to the interface evolution law
\begin{align*}
	\sfV_\Gamma=-\sigma\Delta_\Gamma(\delta\,\textrm{Id}-\omega\Delta_\Gamma)^{-1}\kappa_\Gamma,
\end{align*}
where we recall that $\sfV_\Gamma,\kappa_\Gamma:\Gamma\to\mathbb{R}$ denote the normal velocity resp.\ the mean curvature of $\Gamma$.

\subsubsection{Third order for non-constant coupling  \texorpdfstring{$n$}{}}\label{sssec:fsd}

Here, we consider the setting of \Cref{ass:fsd}. To solve equation~\eqref{eq:2nd.vn.u*} for $m(U^0)E_3$, we integrate over $(-\frac\pi2,\rho)$  and use~\eqref{eq:3.m.ch.full.s} to deduce
\begin{align*}
	m(U^0)E_3=-(U^0+1)V-\DeltaG\int_{-\pi/2}^\rho m(U^0)f\dd\rho'.
\end{align*}
Inserted in~\eqref{eq:2nd.vn.q*}, this gives
\begin{multline}\label{eq:mF.ins}
	\bigg(	-A(U^0)\pa_\rho\Big(\frac{1}{n(U^0)}\Big)(U^0+1)	-\pa_\rho R(U^0)\bigg)V
	\\=-\frac{1}{\tau(U^0)}Q^1
	+A(U^0)\pa_\rho\Big(\frac{1}{n(U^0)}\Big)\, \bigg(\DeltaG\int_{-\pi/2}^\rho m(U^0) f\dd\rho'\bigg).
\end{multline}
Owing to hypothesis~\ref{hp:n.ndeg}, we may divide~\eqref{eq:mF.ins} by $A(U^0)\pa_\rho(\frac{1}{n(U^0)})$. We then recall~\eqref{eq:Q1.f} 
 to substitute $\frac{1}{A(U^0)\pa_\rho(\frac{1}{n(U^0)})}\pa_\rho f$ for $-Q^1$. After multiplying the resulting equation by 
$-\frac{1}{A(U^0)\pa_\rho(\frac{1}{n(U^0)})}$ and computing \[\frac{\frac{A(U^0)}{n(U^0)}\pa_\rho U^0}{A(U^0)\pa_\rho(\frac{1}{n(U^0)})}=-\frac{n(U^0)}{n'(U^0)},\] we deduce 
\begin{align*}
\Big((U^0+1)-\frac{n(U^0)}{n'(U^0)}\Big)
	 V=-\frac{1}{A(U^0)^2\tau(U^0)}\big(\frac{n(U^0)^2}{\pa_\rho n(U^0)}\big)^2\pa_\rho f
	-\DeltaG \int_{-\pi/2}^\rho m(U^0) f\,\dd\rho'.
\end{align*}
Upon differentiation in $\rho$, we arrive at the equation
\begin{subequations}\label{eq:problem.fV}
{\begin{align}\label{eq:f1.vn}
		\boxed{\mathfrak{L}f:=-\pa_\rho\big(
			a(U^0)\pa_\rho f\big)
			-m(U^0)\DeltaG f
			=\Big(\pa_\rho U^0{-}\pa_\rho\big(\frac{n(U^0)}{n'(U^0)}\big)\Big) V\quad\text{ in }\;\{-\tfrac\pi2<\rho<\tfrac\pi2\},}
\end{align}}%
where we abbreviated (cf.~\ref{hp:am})
\[a(u):=\bigg(\frac{1}{A^2\tau}\Big(\frac{n^2}{n'}\Big)^2\bigg)_{|u}\frac{1}{1-u^2},\]
and used the fact that $(\pa_\rho U^0)^2=1-(U^0)^2$.

In order to identify the boundary conditions for $f$ at $\{\rho=\pm \frac\pi2\}$ that supplement
equation~\eqref{eq:f1.vn}, we subtract~\eqref{eq:2nd.vn.u*} from~\eqref{eq:f1.vn}, simplify, and rearrange terms to find
\begin{align*}
	\pa_\rho\big(-a(U^0)\pa_\rho f+\frac{n(U^0)}{n'(U^0)}V+m(U^0)E_3\big)=0.
\end{align*}
Hence, there exists $c_1=c_1(t,s)$, independent of $\rho$, such that $-a\pa_\rho f+\frac{n(U^0)}{n'(U^0)}V+m(U^0)E_3=c_1$. 
Inserting $m(U^0)E_3=c_1-(-a\pa_\rho f+\frac{n(U^0)}{n'(U^0)}V)$ into~\eqref{eq:2nd.vn.q*}, and substituting $\frac{1}{A(U^0)\pa_\rho\big(\frac{1}{n(U^0)}\big)}\pa_\rho f$
for $-Q^1$ in~\eqref{eq:2nd.vn.q*}, we deduce, upon rearranging terms, that $c_1=0$. 
Owing to~\eqref{eq:3.m.ch.full.s}, we thus arrive at the boundary conditions 
 \begin{align}\label{eq:bc.f}
 \boxed{-a(U^0)\pa_\rho f=-\frac{n(U^0)}{n'(U^0)}V\quad\text{ on }\;\{\rho=\pm\tfrac\pi2\}.}
 \end{align}
We next formulate the solvability condition~\eqref{eq:condW1.v} in terms of $f$, using~\eqref{eq:W1.f}, \eqref{eq:Q1.f}. This gives
\begin{align*}
	\int_J W^1\pa_\rho U^0\,\dd\rho
	=\int_J\big( \pa_\rho U^0f+\frac{n(U^0)}{n'(U^0)}\pa_\rho f\big)\,\dd\rho.
\end{align*}
Hence, condition~\eqref{eq:condW1.v} takes the form
{\begin{align}\label{eq:constraint.f.vn}
		\boxed{	\mathfrak{C}f:= \int_J\big( \pa_\rho U^0f+\frac{n(U^0)}{n'(U^0)}\pa_\rho f\big)\,\dd\rho
			=\sigma\kap_\gamma.}
\end{align}}%
\end{subequations}
Note that $D\mathfrak{C}(f)= \pa_\rho U^0-\pa_\rho\big(\frac{n(U^0)}{n'(U^0)}\big)$ in the sense of distributions. Hence, the velocity field $V=V(t,s)$ on the right-hand side of~\eqref{eq:f1.vn},~\eqref{eq:bc.f}, which is independent of $\rho$, arises as the Lagrange multiplier associated to the constraint~\eqref{eq:constraint.f.vn}.
In order to derive the geometric evolution law, we are thus left to determine the couple $(f,V)$ satisfying the equations~\eqref{eq:problem.fV}.

\section{Well-posedness of the constrained elliptic problem}
\label{sec:constr-elliptic}
In this section, we rigorously establish the existence, uniqueness, and regularity of a solution $(f,V)$ to~\eqref{eq:problem.fV} by recasting the equations as a variational problem for sufficiently regular closed connected embedded hypersurfaces $\Gamma$. We further derive an abstract formula for the interface evolution law by identifying the operator that maps given curvature data $\kappa$ to the normal velocity $V$.

\subsection{Notation and hypotheses}\label{ssec:notations.constr-ell}
The problem in this section being purely spatial, we here drop any temporal dependence and write $\gamma:\mathcal{O}\to\Gamma$, $f=f(s,\rho)$, $s\in\mathcal{O},\rho\in J:=(-\frac\pi2,\frac\pi2)$, etc. 
As our analysis of problem~\eqref{eq:problem.fV} is essentially independent of the preceding formal asymptotics, let us separately formulate a relaxed set of hypotheses on the (time-independent) geometry $\Gamma$ and the coefficients $m,n,A^2\tau$ that suffices for the analysis of the present section. 

\paragraph{Hypotheses.}
\begin{enumerate}[label=(e\arabic*)]
	\item\label{hp:e.geom} $\Gamma\Subset \mathbb{R}^d$ is a smooth, closed (incl.\ compact), connected, embedded hypersurface
	\item\label{hp:e.coeff} $m(u)=(1-u^2)^i\tilde m(u)$ for some $i\in \mathbb{N}$, where $\tilde m\in C^\infty([-1,1])$ with $\min_{[-1,1]} \tilde m>0$; \\$ n,A^2\tau\in C^\infty([-1,1])$, $n,n'\not=0$ a.e.,  and \ref{hp:tauA.reg}
	\item\label{hp:tauA^2n.pos} 
	$\underline\iota:=\inf_{(-1,1)}\frac{n^2(u)}{(A^2\tau)(u)\sqrt{1-u^2}}>0$.
\end{enumerate}

\paragraph{Global coordinates.}
For the variational arguments below, it is natural to formulate the problem globally in terms of unknowns $\sff:\Gamma\times [-1,1]\to\mathbb{R}$ and $\sfV:\Gamma\to\mathbb{R}$, which will then yield the local solution $(f,V)$ to~\eqref{eq:problem.fV} for $(s,\rho)\in \mathcal{O}\times J$ (at a fixed time $t$) via 
\[	f(s,\rho)=\sff(\sfs,u),\quad 	V(s)=\sfV(\sfs)\qquad \text{ with }\; (\sfs,u)=(\gamma(s),U^0(\rho)),\quad s\in \mathcal{O}, \rho\in J,\]
where we recall that $U^0(\rho)=\sin\rho$.
Here, $\gamma=\gamma(t,\cdot):\mathcal{O}\subset\mathbb{R}^{d-1}\to \Gamma(t)$ stands for any of the local parametrisations of the evolving hypersurface, evalued at time $t$. 
Note that  for differentiable functions $\sfg=\sfg(\sfs,u)$, due to $\pa_\rho U^0=\sqrt{1-(U^0)^2}$ and the definition of $\Delta_\gamma$ (cf.\ \Cref{sec:prelim}),
\begin{align*}
\frac{1}{\sqrt{1-(U^0)^2}}\frac{\pa}{\pa\rho}\sfg(\sfs,U^0(\cdot))&=(\pa_u\sfg)(\sfs,U^0(\cdot)),
	\\\DeltaG \sfg(\gamma(\cdot),u)&=(\Delta_\Gamma {\sf g})(\gamma(\cdot),u).
\end{align*}
Hence, in the $(\sfs,u)$-coordinates, problem~\eqref{eq:problem.fV} takes the form
\begin{subequations}\label{eq:u-var.key-problem}
	\begin{alignat}{3}
		-\pa_u(\sfa\pa_u \sff)-\sfm\Delta_\Gamma \sff &= \big(1-\pa_u(\frac{n}{n'})\big)\sfV \qquad&&\text{ in }\Gamma\times[-1,1],\label{eq:u-var.elliptic}
		\\-\sfa\pa_u \sff &= - \frac{n}{n'}\sfV&&\text{ on }\Gamma\times\{\pm1\},
		\label{eq:u-var.bc}
		\\ \int_{-1}^{+1}(\sff+\frac{n}{n'}\pa_u\sff)\,\dd u&=\sigma\kappa&& \text{ on }\Gamma\label{eq:u-var.constraint}
	\end{alignat}
\end{subequations}
with $\kappa=\kappa_\Gamma$ and $\sfV=\sfV_\Gamma$, where
here and in the rest of this manuscript, we adopt the notation
\begin{subequations}\label{eq:def.sfa-sfm}
	\begin{align}\label{eq:def.a(u)}
		&\sfa(u):=\Big(\frac{1}{A^2\tau}\big(\frac{n^2}{n'}\big)^2\Big)_{|u}\,\frac{1}{\sqrt{1-u^2}},
		\\\label{eq:def.m(u)/}
		&\sfm(u):=\frac{m(u)}{\sqrt{1-u^2}}.
	\end{align}
\end{subequations}
Observe that~\ref{hp:tauA^2n.pos} implies the bound
\begin{align}\label{eq:e3LC.an}
\sfa\ge\ul\iota\,\Big(\frac{n}{n'}\Big)^2.
\end{align}
This will ensure compatibility of the constraint~\eqref{eq:u-var.constraint} with the functional setting induced by the elliptic operator in~\eqref{eq:u-var.elliptic}.

Since we are interested in determining the propagation operator inducing the interface dynamics, we will develop the well-posedness theory for general functions $\kappa:\Gamma\to\mathbb{R}$, a priori not equal to the mean curvature $\kappa_\Gamma$ of $\Gamma$. 
We will always assume that $\kappa\in\Hd$.

\paragraph{Surface divergence theorem.}

Let us briefly recall the following integration-by-parts formula for sufficiently regular functions $\sff,\mathsf{g}:\Gamma\to\mathbb{R}$ 
\begin{align*}
	\int_{\Gamma}\nabla_\Gamma \sff\cdot\nabla_\Gamma \mathsf{g}\,\dd\mathcal{H}^{d-1} 
	= \int_{\Gamma}(-\Delta_\Gamma \sff)\, \mathsf{g}\,\dd\mathcal{H}^{d-1},
\end{align*}
which is a consequence of the surface divergence theorem on $\Gamma$ for tangential vector fields. This formula will be used below without explicit mention.

\subsection{Function spaces}\label{sssec:spaces}

Let 
\[\mathcal{C}\sff(\sfs)=\int_{-1}^{+1}\big(\sff(\sfs,u)+\frac{n(u)}{n'(u)}\pa_u\sff(\sfs,u)\big)\,\dd u,\]
whever the integral converges.
Then, define the space
\[\hat H:=\left\{\sff\in C^\infty(\Gamma\times [-1,1]):\,
\sqrt{\sfa}\pa_u \sff\in L^2(\Gamma\times [-1,1]),\; \mathcal{C}\sff\in\Hd
\right\}.
\]
Note that, due to~\eqref{eq:e3LC.an},  
\begin{align*}
	\int_{\Gamma}\int_{[-1,1]} 
	\sfa |\pa_u \sff|^2\,\dd u\,\dd \mathcal{H}^{d-1}\ge \ul\iota\int_{\Gamma}\int_{[-1,1]} \left|\frac{n}{n'}\pa_u \sff\right|^2\dd u\,\dd\mathcal{H}^{d-1}.
\end{align*}
Consequently, $\frac{n}{n'}\pa_u \sff\in L^2(\Gamma\times [-1,1])\subset L^1(\Gamma\times [-1,1])$ for $\sff\in C^\infty(\Gamma\times[-1,1])$ with $\sqrt{\sfa}\pa_u \sff\in L^2(\Gamma\times [-1,1])$, showing that the integral $\mathcal{C}\sff=\int_{-1}^{1}\big(\sff+\frac{n}{n'}\pa_u \sff\big)\,\dd u$ is well-defined a.e.\ in $\Gamma$. Thus, the space $\hat H$ is well-defined.

For $\sff,\sfg\in \hat H$ let
\begin{align*}
	\big(\sff,\sfg\big)_{\mathcal{E}}:=\int_{\Gamma}\int_{[-1,1]} \Big(
	\sfa\pa_u \sff\,\pa_u \sfg
	+\sfm\nabla_\Gamma \sff\cdot \nabla_\Gamma \sfg\Big)\,\dd u\,\dd \mathcal{H}^{d-1},
\end{align*}
and 
\begin{align*}
	(\sff,\sfg)_{H}:=(\sff,\sfg)_{\mathcal{E}}
	+(\mathcal{C}\sff,\mathcal{C}\sfg)_{\Hd}.
\end{align*}
The non-negative bilinear form $(\cdot,\cdot)_{H}$ defines an inner product on the space $\hat H$. To see the definiteness, suppose that $(\bar \sff,\bar \sff)_{H}=0$ for some $\bar \sff\in \hat H$.
Since $\sfm,\sfa$ are positive a.e.\ in $[-1,1]$, this implies that $\nabla_\Gamma\bar \sff=0$, $\pa_u\bar f=0$, and hence $\bar \sff\equiv c$ for a fixed constant $c\in\mathbb{R}$. Thus, $2c=\int_{-1}^{1}\bar \sff\,\dd u=\mathcal{C}\bar \sff=0$.
Hence $c=0$, showing the definiteness. 

We now define the Hilbert space $H$ as the completion of $\hat H$ with respect to $\|\cdot\|_H:=(\cdot,\cdot)_H^{1/2}$.
Furthermore, given $\kappa\in \Hd$, we let
\begin{align*}
	M_\kappa = \left\{\sff\in H: \;\mathcal{C}\sff=\sigma\kap\right\}.
\end{align*}
The set $M_\kappa$ is non-empty (since the function $\sff(\sfs,u)\equiv\frac\sigma2\kappa(\sfs)$ lies in $M_\kappa$) and forms an affine subspace of $H$.
Furthermore, due to $\|\mathcal{C}\sff\|_{\Hd}\le \|\sff\|_H$, the linear operator $\mathcal{C}:H\to \Hd$ is continuous, which implies that $M_\kap\subset H$ is closed.

\subsection{Variational characterisation and interface dynamics}\label{sssec:varchar}

For $\sff\in H$ define the quadratic functional
\begin{align*}
\mathcal{E}(\sff)=\frac12\int_{\Gamma}\int_{[-1,1]}\Big(
\sfa(\pa_u \sff)^2
+\sfm|\nabla_\Gamma \sff|^2\Big)\,\dd u\,\dd \mathcal{H}^{d-1},
\end{align*}
i.e.\ $\mathcal{E}(\sff)=\frac12(\sff,\sff)_{\mathcal{E}}$.

Consider the minimisation problem of $\mathcal{E}$ on $M_\kap$: find $\sff\in M_\kap$ such that 
\begin{align}\label{eq:c-min.f.vn}
\mathcal{E}(\sff)=\inf_{\tilde\sff\in M_\kap}\mathcal{E}(\tilde\sff).
\end{align}
The Lagrangian $L:H\times\Hd^*\to\mathbb{R}$ associated to~\eqref{eq:c-min.f.vn} is given by \[L(\sff,\sfV)=\mathcal{E}(\sff)-\la\sfV,\mathcal{C}\sff-\sigma\kap\ra_{\Hd^*,\Hd}.\] At any critical point $(\sff,\sfV)$ it holds that $\pa_\sff L(\sff,\sfV)=0$, $\pa_\sfV L(\sff,\sfV)=0$. Hence,
\begin{align*}
D\mathcal{E}(\sff)-\la\sfV,\mathcal{C}\cdot\ra_{\Hd^*,\Hd}=0\quad\text{ in }H^*,
\end{align*}
which is the appropriate weak formulation of~\eqref{eq:u-var.elliptic},~\eqref{eq:u-var.bc} and 
\begin{align*}
	\mathcal{C}\sff-\sigma\kap=0\quad\text{ in }\Hd,
\end{align*}
which specifies~\eqref{eq:u-var.constraint}. 
Thus, the system~\eqref{eq:u-var.key-problem} are
 the Euler--Lagrange equations $DL_{|(\sff,\sfV)}=0$ of~\eqref{eq:c-min.f.vn}. 
 We formalise these observations in the following proposition.

\begin{proposition}\label{prop:solveP.vn}
Assume hypotheses~\ref{hp:e.geom},~\ref{hp:e.coeff}, and~\ref{hp:tauA^2n.pos}.
	Given a function $\kap\in \Hd$,
	there exists a unique  couple $(\sff,\sfV)\in M_\kap\times \Hd^*$ solution to
		\begin{subequations}
			\label{eq:var.abstract}
		\begin{alignat}{3}
			\label{eq:Lf.F}
		D\mathcal{E}(\sff)&=\la\sfV,\mathcal{C}\cdot\ra_{\Hd^*,\Hd}\quad&&\text{ in }H^*,
\\\label{eq:LV.F}
		\mathcal{C}\sff&=\sigma\kap\quad&&\text{ in }\Hd.
		\end{alignat}
	\end{subequations}
	In particular, $\int_{-1}^{1}\big(\sff+\frac{n}{n'}\pa_u \sff\big)\,\dd u=\sigma\kap$ a.e.\ in $\Gamma$, and 
	for all $\vp\in H$
	\begin{align}
		\int_{\Gamma}\int_{[-1,1]} \Big(\sfa\pa_u \sff\,\pa_u \vp
		+\sfm\nabla_\Gamma \sff\cdot \nabla_\Gamma \vp\Big)\,\dd u\,\dd \mathcal{H}^{d-1}= 
		\big\la\sfV,\int_{-1}^{1}\big(\vp+\frac{n}{n'}\pa_u \vp\big)\,\dd u\big\ra_{\Hd^*,\Hd}.\qquad
	\label{eq:weakForm.vn}\end{align}
	Define the linear solution operator
		\begin{align*}
		\mathcal{\wh G}=(\mathcal{F},\mathcal{G}): \Hd&\to H\times \Hd^*
	\\	\kap&\mapsto (\sff,\sfV).
		\end{align*}
Then, $\mathcal{G}\kappa=-\frac12\Delta_\Gamma\int_{-1}^1\sfm\sff\,\dd u$, where $\sff:=\mathcal{F}\kappa$. More precisely, for all $\psi\in \Hd$
\begin{align}\label{eq:Gkappa.formula}
\la\mathcal{G}\kappa,\psi\ra_{\Hd^*,\Hd}=\frac12\int_{\Gamma}\nabla_\Gamma\left(\int_{-1}^1\sfm\mathcal{F}\kappa\,\dd u\right)\cdot\nabla_\Gamma\psi\,\dd\mathcal{H}^{d-1}.
\end{align}
Furthermore, $\mathcal{\wh G}$ is continuous and 
\begin{subequations}\label{eq:hatG.cont}
\begin{align}\label{eq:Fkap.bd}
(\sff,\sff)_{\mathcal{E}}^\frac12&\le\frac{\sigma}{\sqrt{\delta}}\|\nabla_\Gamma\kappa\|_{L^2(\Gamma)},\quad\sff:=\mathcal{F}\kappa,
 \\
\sup_{\{\psi\in H^1:\|\nabla_\Gamma\psi\|_{L^2}\le 1\}} \la\mathcal{G}\kappa,\psi\ra_{H^1(\Gamma)^*,H^1(\Gamma)}&\le
\frac\sigma\delta\,\|\nabla_\Gamma\kappa\|_{L^2(\Gamma)},
 \label{eq:Gkap.bd}
\end{align}
\end{subequations}
where $\sigma,\delta$ are given by~\eqref{eq:sig-del.u}.
\end{proposition}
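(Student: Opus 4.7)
The plan is to treat~\eqref{eq:var.abstract} as the Euler--Lagrange system of the constrained quadratic minimisation~\eqref{eq:c-min.f.vn} and to produce $\sfV$ as the associated Lagrange multiplier. The structural observation that makes this clean is that on $\ker(\mathcal{C})\subset H$ the two inner products $(\cdot,\cdot)_{\mathcal{E}}$ and $(\cdot,\cdot)_H$ coincide (since $\mathcal{C}\vp=0$ there), so $(\ker(\mathcal{C}),(\cdot,\cdot)_{\mathcal{E}})$ is itself a Hilbert space on which the $\mathcal{E}$-inner product is genuinely coercive.

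First, I would select the $u$-independent reference lift $\sff_0(\sfs,u):=\tfrac{\sigma}{2}\kap(\sfs)$, which lies in $M_\kap$ because $\mathcal{C}\sff_0=\sigma\kap$ and $\pa_u\sff_0=0$, and verify it belongs to $H$ by approximating $\kap\in\Hd$ by smooth functions and using $\int_{-1}^{1}\sfm\,\dd u<\infty$ to control the $\nabla_\Gamma$-contribution. Writing $\sff=\sff_0+g$ with $g\in\ker(\mathcal{C})$, the minimisation reduces to finding the Riesz representer $g$ of $-(\sff_0,\cdot)_{\mathcal{E}}$ on $(\ker(\mathcal{C}),(\cdot,\cdot)_{\mathcal{E}})$, which yields a unique $\sff\in M_\kap$ with $(\sff,\vp)_{\mathcal{E}}=0$ for all $\vp\in\ker(\mathcal{C})$. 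To extract $\sfV$, I would use that $\mathcal{C}:H\to\Hd$ admits the bounded right inverse $\psi\mapsto\tfrac{\psi}{2}$; consequently $\mathcal{C}$ is surjective with closed range, and the continuous linear functional $\vp\mapsto(\sff,\vp)_{\mathcal{E}}$ on $H$, which vanishes on $\ker(\mathcal{C})$, factors uniquely as $\la\sfV,\mathcal{C}\cdot\ra_{\Hd^*,\Hd}$ for some $\sfV\in\Hd^*$.

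The representation $\mathcal{G}\kap=-\tfrac{1}{2}\Delta_\Gamma\int_{-1}^{1}\sfm\sff\,\dd u$ then falls out of testing~\eqref{eq:weakForm.vn} against $\vp(\sfs,u):=\psi(\sfs)$ for arbitrary $\psi\in\Hd$: since $\pa_u\vp=0$ and $\mathcal{C}\vp=2\psi$, the $\sfa$-term drops out and the identity collapses to~\eqref{eq:Gkappa.formula}. For the continuity bounds, the minimality inequality $\mathcal{E}(\sff)\le\mathcal{E}(\sff_0)$ combined with $\int_{-1}^{1}\sfm\,\dd u=4/\delta$ (immediate from~\eqref{eq:sig-del.u} and~\eqref{eq:def.m(u)/}) yields $\mathcal{E}(\sff)\le\tfrac{\sigma^2}{2\delta}\|\nabla\kap\|_{L^2(\Gamma)}^2$, which is~\eqref{eq:Fkap.bd}; the dual estimate~\eqref{eq:Gkap.bd} follows from Cauchy--Schwarz applied to~\eqref{eq:Gkappa.formula} together with the same integral identity.

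The main subtle point is the interplay between the completion defining $H$ and the concrete integral constraint $\mathcal{C}$: I would need to confirm that $\mathcal{C}:H\to\Hd$ remains continuous and surjective after passing to the completion and that the $u$-constant lifts used for surjectivity are indeed admissible elements of $H$, which rests on the integrability of $\sfm$ built into hypothesis~\ref{hp:e.coeff}. Once this functional-analytic foundation is secured, the entire argument reduces to a Riesz representation on $\ker(\mathcal{C})$ and a factorisation through $\mathcal{C}$, with all stated estimates following from the elementary comparison with $\sff_0$.
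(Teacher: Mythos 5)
Your proof is correct, and it takes a genuinely different (and arguably more self-contained) route than the paper. The paper invokes the direct method of the calculus of variations to produce a unique minimiser of $\mathcal{E}$ on $M_\kappa$ and then appeals to the abstract Lagrange multiplier theorem (Zeidler, Theorem~43D) to extract $\sfV\in H^1(\Gamma)^*$, using the submersion property of $\mathcal{C}$. You instead bypass both black boxes: you pick the explicit $u$-independent lift $\sff_0=\tfrac{\sigma}{2}\kappa\in M_\kappa$, observe that $(\cdot,\cdot)_{\mathcal{E}}$ and $(\cdot,\cdot)_H$ coincide on the closed subspace $\ker(\mathcal{C})$ so that it is already a Hilbert space for $(\cdot,\cdot)_{\mathcal{E}}$, solve the minimisation via Riesz representation there, and then obtain $\sfV$ from the classical factorisation of a functional vanishing on $\ker(\mathcal{C})$ through the bounded surjection $\mathcal{C}$ (the right inverse $\psi\mapsto\tfrac{\psi}{2}$ giving the open-mapping/quotient identification $H/\ker(\mathcal{C})\cong H^1(\Gamma)$). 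Your derivation of the bound~\eqref{eq:Fkap.bd} is also slightly different and more direct: you use the minimality comparison $\mathcal{E}(\sff)\le\mathcal{E}(\sff_0)$ together with $\int_{-1}^{1}\sfm\,\dd u=4/\delta$, whereas the paper first estimates $\mathcal{G}\kappa$ from~\eqref{eq:Gkappa.formula} and then tests~\eqref{eq:Lf.F} with $\mathcal{F}\kappa$; both give the same constant. What the paper's route buys is brevity by citing known theorems; what yours buys is a transparent Hilbert-space picture in which the existence and uniqueness of both $\sff$ and $\sfV$ are reduced to Riesz representation and a quotient argument, and the coercivity needed is exhibited structurally rather than checked ad hoc. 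The one point you flag yourself --- that $\sff_0\in H$ and that $\mathcal{C}$ remains continuous and surjective on the completion --- is indeed the only place where care is required, and your sketched approximation argument using $\int_{-1}^{1}\sfm\,\dd u<\infty$ handles it; the paper treats this implicitly when asserting $M_\kappa\neq\emptyset$.
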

The second component $\mathcal{G}=\mathcal{G}_\Gamma$ of the operator $\mathcal{\wh G}$ determines the evolution law of the moving hypersurface $\Gamma=\Gamma(t)$ through $\sfV_\Gamma=\mathcal{G}_\Gamma\kappa_\Gamma$, where $\kappa_\Gamma\in H^1(\Gamma)$ denotes the mean curvature of $\Gamma$, and $\sfV_\Gamma$ the normal velocity (cf.\ problem~\eqref{eq:u-var.key-problem} resp.~\eqref{eq:problem.fV}).
\begin{definition}
We call the operator $\mathcal{G}_\Gamma:\kappa\to -\frac12\Delta_\Gamma\int_{-1}^1\sfm\mathcal{F}\kappa\,\dd u$ the \emph{propagation operator}.
\end{definition}
\begin{proof}[Proof of Proposition~\ref{prop:solveP.vn}]
	The functional $\mathcal{E}:H\to \mathbb{R}$ is
convex and continuous, and thus weakly lower semi-continuous. Furthermore, the restriction 
$\mathcal{E}:M_\kap\to \mathbb{R}$ is
 mildly coercive on $M_\kap$ ensuring that minimising sequences of $\mathcal{E}$ in $M_\kap$ are bounded with respect to $\|\cdot\|_H$.
The affine space $M_\kap\subset H$ is closed, and thus weakly closed. 
Consequently, a standard application of the direct method of the calculus of variations (cf.~\cite[Proposition~41.2]{Zeidler_Variational}) yields a unique solution $f\in M_\kap$ to the constrained minimisation problem
\begin{align}\label{eq:c-min.f.F.vn}
	\mathcal{E}(\sff)=\inf_{\tilde\sff\in M_\kap}\mathcal{E}(\tilde\sff).
\end{align}
The uniqueness of the solution $\sff$ to~\eqref{eq:c-min.f.vn} follows from the strict convexity of $\mathcal{E}_{|M_\kap}$.

Equation~\eqref{eq:LV.F} is immediate, since $\sff\in M_\kap$. To deduce~\eqref{eq:Lf.F}, we note that the continuous linear operator $\mathcal{C}:H\to \Hd$ is a submersion (since $\mathcal{C}(\frac12 h)=h$ for all $h=h(\sfs)\in \Hd$). Therefore, the theory of Lagrange multipliers (see e.g.~\cite[Theorem 43\,D\,(1)]{Zeidler_Variational}) yields the existence of a unique $\sfV\in \Hd^*$ such that for all $\vp\in H$
\begin{align}\label{eq:LagrangeMultiplierCond.vn}
\la D\mathcal{E}(\sff),\vp\ra_{H^*,H}=\la\sfV, D\mathcal{C}(\sff)\vp\ra_{\Hd^*,\Hd}=\la\sfV, \mathcal{C}\vp\ra_{\Hd^*,\Hd},
\end{align}
where the second equality follows from the linearity of $\mathcal{C}$.
The uniqueness of solutions to~\eqref{eq:LagrangeMultiplierCond.vn} follows by invoking the converse direction~\cite[Theorem 43\,D\,(2)]{Zeidler_Variational} of the Lagrange multiplier rule and the uniqueness of the solution $\sff$ to~\eqref{eq:c-min.f.F.vn}.

By construction, $D\mathcal{E}(\sff)=(\sff,\cdot)_{\mathcal{E}}$. Inserting this identity in~\eqref{eq:LagrangeMultiplierCond.vn}, we conclude the weak formulation~\eqref{eq:weakForm.vn}.

Choosing in~\eqref{eq:weakForm.vn} the test function $\vp\equiv\psi$ with $\psi\in \Hd$, which is  admissible since $\vp\in H$, we deduce~\eqref{eq:Gkappa.formula}.

It remains to show the bounds~\eqref{eq:Fkap.bd},~\eqref{eq:Gkap.bd}, which imply the continuity of the linear map $\mathcal{\wh G}$ from $\Hd$ to $H\times \Hd^*$.
From~\eqref{eq:Gkappa.formula} we deduce the bound
\[
\la\mathcal{G}\kappa,\psi\ra_{H^1(\Gamma)^*,H^1(\Gamma)}\le\frac12 \left(\int_{-1}^1\sfm\,\dd u\right)^\frac12(\sff,\sff)_{\mathcal{E}}^\frac12\|\nabla_\Gamma\psi\|_{L^2(\Gamma)}=\frac{1}{\sqrt{\delta}}(\sff,\sff)_{\mathcal{E}}^\frac12\|\nabla_\Gamma\psi\|_{L^2(\Gamma)},\quad \sff=\mathcal{F}\kappa.\]
Choosing $\mathcal{F}\kappa\;(=\sff)$ itself as a test function in~\eqref{eq:Lf.F} then gives
\begin{align*}(\mathcal{F}\kappa,\mathcal{F}\kappa)_{\mathcal{E}}
&\le \sigma \sup_{\{\psi\in H^1:\|\nabla_\Gamma\psi\|_{L^2}\le 1\}} \la\mathcal{G}\kappa,\psi\ra_{H^1(\Gamma)^*,H^1(\Gamma)}\|\nabla_\Gamma\kappa\|_{L^2(\Gamma)}
\\&\le\frac\sigma2 \left(\int_{-1}^1\sfm\,\dd u\right)^\frac12(\mathcal{F}\kappa,\mathcal{F}\kappa)_{\mathcal{E}}^\frac12\|\nabla_\Gamma\kappa\|_{L^2(\Gamma)}
=\frac{\sigma}{\sqrt{\delta}} 
(\mathcal{F}\kappa,\mathcal{F}\kappa)_{\mathcal{E}}^\frac12\|\nabla_\Gamma\kappa\|_{L^2(\Gamma)}. 
\end{align*}
\end{proof}

\subsection{Regularity}\label{ssec:regularity}
Here, we show a basic regularity property of the solution $(\sff,\sfV)$ to the constrained elliptic equation~\eqref{eq:Lf.F},~\eqref{eq:LV.F} in tangential variables.
To this end, it will be convenient to work with a suitable orthonormal basis 
of eigenfunctions of the minus Laplace--Beltrami operator $-\Delta_{\Gamma}$. We therefore begin with some preliminaries introducing the appropriate functional setting and spectral decomposition, which we will also utilise in Section~\ref{ssec:symbol}.

\paragraph{Homogeneous Sobolev spaces.} Given a hypersurface $\Gamma$ satisfying~\ref{hp:e.geom}, we denote by $\dot L^2(\Gamma):=\{\sfh\in L^2(\Gamma):\av_{\Gamma} \sfh\,\dd\mathcal{H}^{\dimx-1}=0\}$
 the Hilbert space of square-integrable real-valued functions on $\Gamma$ with zero average.
The minus Laplace--Beltrami operator $-\Delta_\Gamma$, considered as an unbounded operator $-\Delta_\Gamma:D(-\Delta_\Gamma)\Subset\dot L^2(\Gamma)\to \dot L^2(\Gamma)$ with compactly embedded domain, is selfadjoint and strictly positive. 
Thus, by the spectral theorem, there exists an orthonormal basis of eigenfunctions $\{\ee_k\}_{k\in\mathbb{N}}\subset \dot L^2(\Gamma)$ of  $-\Delta_\Gamma$ with associated eigenvalues  $0<\lambda_1\le\lambda_2\le \dots$ satisfying $\lambda_k\uparrow+\infty$ as $k\to\infty$.
For $s\in\mathbb{R}$ and $\mathsf{h}=\sum_{k\in\mathbb{N}}h_k\ee_k$, $h_k\in \mathbb{R}$, we define 
\begin{align*}
	\|\mathsf{h}\|_{\dot H^s}^2:=\sum_{k\in\mathbb{N}}\lambda_k^s|h_k|^2,
\end{align*}
and let
\begin{align*}
	\dot H^s(\Gamma):=\{{\sf h}=\sum_{k\in\mathbb{N}}h_k\ee_k:
	\|\mathsf{h}\|_{\dot H^s}<\infty\}
\end{align*}
denote the homogeneous $L^2$-based Sobolev space of order $s$. Observe that $\dot H^2(\Gamma)$ is the domain of $-\Delta_\Gamma$, and that, owing to~\eqref{eq:G.upper-bd}, the domain of $\mathcal{G}_\Gamma$ contains $\dot H^2(\Gamma)$.
Further note that $-\Delta_\Gamma: \dot H^s(\Gamma)\to  \dot H^{s-2}(\Gamma)$ is an isometric isomorphism.
Finally, observe the natural isomorphism $\dot H^{-s}(\Gamma)\simeq\dot H^s(\Gamma)^*$
given by
\begin{align*}
	\dot H^{-s}(\Gamma)\ni {\sf h}=\sum_{k\in \mathbb{N}}h_k\ee_k\mapsto \tilde{\sf h}, \quad \la \tilde {\sf h},\phi\ra_{\dot H^s(\Gamma)^*,\dot H^s(\Gamma)}=\sum_{k\in\mathbb{N}}h_k\phi_k.
\end{align*}
We further let 
\begin{align*}
	\Lambda=\{\lambda_k:k\in \mathbb{N}\}, \qquad\text{ and }\qquad\Lambda_R=\{\lambda_k\in\Lambda:\lambda\le R\}, \; R>0.
\end{align*}
In general, an eigenvalue $\lambda\in\Lambda$ may, of course, have multiplicity strictly larger than one in the sense that
$\lambda=\lambda_k=\lambda_l$ for certain $k\not=l$.

\paragraph{Projection on eigenspace.}

\begin{lemma}\label{l:G.diagonal} The following holds true:
	\begin{enumerate}
		\item For all $k,l\in \mathbb{N}$ with $k\neq l$ it holds that
		\begin{align}\label{eq:phikl=0}
			(\mathcal{F}\ee_k,\ee_l)_{L^2(\Gamma)}=0\quad\text{a.e.\ in }(-1,1).
		\end{align}
		\item There exists $\zeta:\Lambda\to\mathbb{R}_{>0}$ such that for all $\sfh\in \dot H^2(\Gamma)$,
		$\sfh=\sum_{k\in\mathbb{N}}h_k\ee_k$, 
		\begin{align}\label{eq:GGam.spectral}
			\mathcal{G}_{\Gamma}\sfh = \sum_{k\in\mathbb{N}}\zeta(\lambda_k)h_k\ee_k, \qquad 
		\end{align}
		The map $\zeta$ is uniquely determined by 
		\begin{align}\label{eq:zetak}
			\zeta(\lambda_k)=(\mathcal{G}_\Gamma\ee_k,\ee_k)_{L^2(\Gamma)}=\frac12\lambda_k\int_{-1}^1\sfm f_k\,\dd u,\quad f_k:=(\mathcal{F}\ee_k,\ee_k)_{L^2(\Gamma)},
			\quad k\in\mathbb{N}.
		\end{align}
	\end{enumerate}
\end{lemma}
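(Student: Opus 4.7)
My approach hinges on the observation that, by hypothesis~\ref{hp:e.coeff}, the coefficients $\sfa(u)$, $\sfm(u)$, $\tfrac{n(u)}{n'(u)}$ appearing in the constrained elliptic system~\eqref{eq:u-var.key-problem} depend only on the normal variable $u$, so the problem decouples tangentially in the orthonormal eigenbasis $\{\ee_k\}_{k\in \mathbb{N}}$ of $-\Delta_\Gamma$. Concretely, I would prove both parts simultaneously by explicitly constructing the solution for $\kappa=\ee_k$ via the separation-of-variables ansatz
\[\sff(\sfs,u)=\phi_k(u)\,\ee_k(\sfs),\qquad \sfV(\sfs)=v_k\,\ee_k(\sfs),\]
with $\phi_k:[-1,1]\to\mathbb{R}$ and $v_k\in\mathbb{R}$ to be determined. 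Using $-\Delta_\Gamma \ee_k=\lambda_k\ee_k$, substitution into~\eqref{eq:u-var.elliptic}--\eqref{eq:u-var.constraint} reduces the full problem on $\Gamma\times[-1,1]$ to a one-dimensional constrained two-point problem on $[-1,1]$:
\[-(\sfa\phi_k')'+\lambda_k\sfm\phi_k=\bigl(1-\pa_u(\tfrac{n}{n'})\bigr)v_k,\quad -\sfa\phi_k'\big|_{\pm1}=-\tfrac{n}{n'}v_k\big|_{\pm1},\quad \int_{-1}^{1}\!\bigl(\phi_k+\tfrac{n}{n'}\phi_k'\bigr)du=\sigma,\]
which depends on $k$ only through the eigenvalue $\lambda=\lambda_k$. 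Existence of a solution $(\phi_k,v_k)$ to this reduced problem follows from exactly the same variational argument as in \Cref{prop:solveP.vn} applied fibrewise (or, more directly, by verifying that the ansatz lies in $H$ and satisfies the weak formulation~\eqref{eq:weakForm.vn} with the right-hand side reduced to a single basis function). The global uniqueness statement of \Cref{prop:solveP.vn} then forces $\mathcal{F}\ee_k=\phi_k(u)\ee_k(\sfs)$ and $\mathcal{G}_\Gamma\ee_k=v_k\,\ee_k(\sfs)$.

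With this separated representation in hand, assertion~(1) is immediate from the orthonormality of $\{\ee_k\}$: for $k\not=l$,
\[(\mathcal{F}\ee_k,\ee_l)_{L^2(\Gamma)}(u)=\phi_k(u)\int_\Gamma \ee_k\ee_l\,\dd\mathcal{H}^{d-1}=0,\]
and simultaneously $f_k(u)=(\mathcal{F}\ee_k,\ee_k)_{L^2(\Gamma)}=\phi_k(u)$. For assertion~(2), define $\zeta(\lambda_k):=v_k$; well-definedness as a function on $\Lambda$ (independent of which index realises a given eigenvalue) follows because the reduced ODE problem depends on $k$ only through $\lambda_k$. Given $\sfh=\sum_k h_k\ee_k\in \dot H^2(\Gamma)$, linearity of $\mathcal{\widehat G}$ on finite partial sums gives $\mathcal{G}_\Gamma(\sum_{k\le N}h_k\ee_k)=\sum_{k\le N}h_k\zeta(\lambda_k)\ee_k$; the upper bound~\eqref{eq:G.upper-bd} yields $\zeta(\lambda_k)\le\tfrac{\sigma}{\delta}\lambda_k$, so the right-hand side converges in $L^2(\Gamma)$ as $N\to\infty$, and the limit agrees with $\mathcal{G}_\Gamma \sfh$ by continuity of $\mathcal{G}_\Gamma$. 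Formula~\eqref{eq:zetak} then follows by inserting the separated form into the definition $\mathcal{G}_\Gamma\kappa=-\tfrac12\Delta_\Gamma\int_{-1}^1\sfm\mathcal{F}\kappa\,\dd u$:
\[\mathcal{G}_\Gamma\ee_k=-\tfrac12\Delta_\Gamma\ee_k\cdot\int_{-1}^1\sfm\phi_k\,\dd u=\tfrac12\lambda_k\,\ee_k\int_{-1}^1\sfm f_k\,\dd u,\]
and pairing with $\ee_k$. Strict positivity $\zeta(\lambda_k)>0$ follows from the equality case in \Cref{prop:gs}\,\ref{it:G.pos}: since $\ee_k$ has zero mean and unit $L^2$-norm it is non-constant, so $(\mathcal{G}_\Gamma\ee_k,\ee_k)_{L^2(\Gamma)}>0$.

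\textbf{Expected obstacle.} The real work is in justifying the separation-of-variables ansatz rigorously --- that is, verifying that $\phi_k(u)\ee_k(\sfs)$ genuinely lies in the completion $H$ (requiring in particular $\sqrt{\sfa}\,\phi_k'\in L^2(-1,1)$ and $\mathcal{C}(\phi_k\ee_k)\in H^1(\Gamma)$) and that the resulting pair satisfies the weak form~\eqref{eq:weakForm.vn} for \emph{all} test functions $\vp\in H$, not just those of product form. Since $\{\ee_k\ee_l\}$ is not a basis of $H$, a density argument is needed: test functions of the form $\psi(\sfs)\chi(u)$ with $\psi\in C^\infty(\Gamma)$, $\chi\in C^\infty([-1,1])$ are dense in $H$, and on such test functions the identity reduces --- via expansion of $\psi$ in $\{\ee_k\}$ --- to the one-dimensional weak form of the reduced ODE problem. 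Once this density step is in place, uniqueness from \Cref{prop:solveP.vn} does the rest, and no further estimates on $\phi_k$ are needed beyond those built into the definition of $H$.
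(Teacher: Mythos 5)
Your proposal is correct in spirit but follows a genuinely different route from the paper. You construct the solution in separated form $\phi_k(u)\ee_k(\sfs)$ and invoke the uniqueness from \Cref{prop:solveP.vn} to identify $\mathcal{F}\ee_k$ with it; the burden then falls on verifying that the product ansatz is a weak solution against \emph{all} test functions in $H$, which forces the density/tensor-product argument you flag as the ``real work.'' The paper goes the other way round: starting from the abstract weak solution $\sff=\mathcal{F}\ee_k$, it sets $\phi_{kl}:=(\sff,\ee_l)_{L^2(\Gamma)}$ and tests the weak formulation~\eqref{eq:weakForm.vn} with the single, tailor-made test function $\vp=\phi_{kl}\ee_l$. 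The constraint~\eqref{eq:LV.F} gives $\mathcal{C}\phi_{kl}=\sigma(\ee_k,\ee_l)_{L^2(\Gamma)}=0$, so the right-hand side drops out and, using $-\Delta_\Gamma\ee_l=\lambda_l\ee_l$ and orthonormality, the left-hand side collapses to $\int_{-1}^1(\sfa|\phi_{kl}'|^2+\lambda_l\sfm|\phi_{kl}|^2)\,\dd u=0$, forcing $\phi_{kl}\equiv0$. Testing then with $\vp=\ee_l$ gives $(\mathcal{G}_\Gamma\ee_k,\ee_l)=\zeta_k\delta_{kl}$, and well-definedness of $\zeta$ on $\Lambda$ is, as you say, read off from the fact that the reduced problem depends on $k$ only through $\lambda_k$. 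The paper's argument thus avoids the density step entirely: it never needs to check that the product ansatz satisfies the weak form against arbitrary test functions, because it proves the separated structure of the \emph{given} solution directly.

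Two small points worth flagging in your write-up. First, your density claim is stated loosely: products $\psi(\sfs)\chi(u)$ are not themselves dense in $H$ (you need finite linear combinations of such products), and expanding $\psi$ in $\{\ee_k\}$ introduces an infinite series whose convergence in $\|\cdot\|_H$ must be controlled before the reduction to the one-dimensional weak form is legitimate; this is more work than your phrasing suggests, even though it is ultimately doable since $\hat H=C^\infty(\Gamma\times[-1,1])$ and $\|\cdot\|_H\lesssim\|\cdot\|_{C^1}$ under the hypotheses~\ref{hp:e.coeff},~\ref{hp:tauA^2n.pos}. Second, your deduction of strict positivity $\zeta(\lambda_k)>0$ from the equality case in \Cref{prop:gs}\,\ref{it:G.pos} is valid and non-circular (that proposition precedes the lemma and is proved from \Cref{prop:solveP.vn} alone), and indeed it is the natural place to source this claim, which the paper leaves implicit in its proof.
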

\begin{proof}
	Given $k\neq l$, we take $\kappa=\ee_k$ and $\vp(\sfs,u)=\phi_{kl}(u)\ee_l(\sfs)$, where
	$\phi_{kl}=(\mathcal{F}\ee_k,\ee_l)_{L^2(\Gamma)}$ in \Cref{prop:solveP.vn} (cf.\ \eqref{eq:LV.F}, \eqref{eq:weakForm.vn}).
	Then 
	\[\mathcal{C}\phi_{kl}=\mathcal{C}(\mathcal{F}\ee_k,\ee_l)_{L^2(\Gamma)}=\sigma(\ee_k,\ee_l)_{L^2(\Gamma)}=0.\]
	Hence, with the above choice of $\vp$, the right-hand side of equation \eqref{eq:weakForm.vn} vanishes, and we infer, upon rearranging terms,
	\[\int_{-1}^1 \big(\sfa|\pa_u\phi_{kl}|^2 
	+\lambda_l\sfm|\phi_{kl}|^2\big)\dd u= 0,\]
	which implies~\eqref{eq:phikl=0}. Choosing $\vp\equiv\ee_l$ in~\eqref{eq:weakForm.vn} then yields $(\mathcal{G}_\Gamma\ee_k,\ee_l)_{L^2(\Gamma)}=\zeta_k\delta_{kl}$ with $\zeta_k$ given by the right-hand side of~\eqref{eq:zetak}. In view of the completeness of the orthonormal system $\{e_k\}_{k\in \mathbb{N}}\subset \dot L^2(\Gamma)$, we thus infer~\eqref{eq:GGam.spectral} with $\zeta(\lambda_k)$ replaced by $\zeta_k$. 
	
	It remains to show that $\zeta_k=\zeta_l$ whenever $\lambda_k=\lambda_l$.
This is a consequence of the fact that the problem uniquely determining $f_k=(\mathcal{F}\ee_k,\ee_k)_{L^2(\Gamma)}$ only depends on $k$ through $\lambda_k$:
specifically, the equations for $f_k$ are obtained by choosing in~\eqref{eq:var.abstract} the data $\kappa=\ee_k$ and in the weak formulation~\eqref{eq:weakForm.vn} the test function $\vp(\sfs,u)=\phi(u)\ee_k(\sfs)$ for $\phi\in C^\infty([-1,1])$ with $\phi'\in C^\infty_c((-1,1))$, and by taking the $L^2(\Gamma)$-inner product of
\eqref{eq:LV.F} with $\ee_k$:
\begin{subequations}\label{eq:k.gen.abstract}
	\begin{align}
		\label{eq:k.gen.abstract.e}
		&	\int_{-1}^1 \big(\sfa \pa_u f_k\,\pa_u \phi
		+\lambda_k\sfm f_k\phi\big)\dd u= 
		\zeta_k\int_{-1}^1\big(\phi+\frac{n}{n'}\pa_u \phi\big)\dd u\;\;\;
		\forall \,\phi\in C^\infty,\, \supp\phi'\Subset(-1,1), 
		\\& \int_{-1}^1\big(f_k+\frac{n}{n'}\pa_u f_k\big)\,\dd u=\sigma,
		\label{eq:k.gen.abstract.c}
	\end{align}
\end{subequations}
where we recall that $\sigma$ is given by~\eqref{eq:sig-del.u}. Here, $\zeta_k$ is the Lagrange parameter to~\eqref{eq:k.gen.abstract.c}.
Thus, under the constraint~\eqref{eq:k.gen.abstract.c}, the solution $f_k$ to equation~\eqref{eq:k.gen.abstract.e} is already uniquely determined when restricting to
 test functions $\phi$ satisfying $\mathcal C\phi\equiv0$, for which the right-hand side of~\eqref{eq:k.gen.abstract.e} vanishes. This shows that the solution $f_k$ only depends on $k$ through $\lambda_k$, and hence the same is true for $\zeta_k$. Thus, we can set $\zeta(\lambda_k):=\zeta_k$ for all $k\in \mathbb N$.
\end{proof}
	
\paragraph{Regularity.}
Taking advantage of the fundamental orthogonality relations derived in Lemma~\ref{l:G.diagonal}, we are now in a position to establish regularity in tangential variables.
\begin{lemma}[Higher regularity in tangential variables]\label{l:hreg.s}$ $
	Let $\kappa\in H^{k+1}(\Gamma)$ for some $k\in \mathbb{N}$. 
	Then $(-\Delta_\Gamma)^{k/2}\mathcal{F}\kappa\in H$ and $(-\Delta_\Gamma)^{k/2}\mathcal{G}\kappa\in H^1(\Gamma)^*$.
\end{lemma}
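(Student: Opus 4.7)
The strategy is to exploit the translation invariance of the constrained elliptic system~\eqref{eq:u-var.key-problem} in the tangential variable $\sfs \in \Gamma$: the coefficients $\sfa(u)$, $\sfm(u)$, and the weight $n(u)/n'(u)$ depend only on $u$, while $\sfs$ enters exclusively through $-\Delta_\Gamma$. Therefore, the linear maps $\mathcal{F}$ and $\mathcal{G}$ ought to commute with any tangential function of $-\Delta_\Gamma$, and the claim should reduce to the continuity bounds \eqref{eq:Fkap.bd}--\eqref{eq:Gkap.bd} applied to $(-\Delta_\Gamma)^{k/2}\kappa$, which lies in $\Hd$ whenever $\kappa \in H^{k+1}(\Gamma)$.

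To formalise this, the key step is a separation-of-variables identity. For each eigenpair $(\ee_j,\lambda_j)$ of $-\Delta_\Gamma$, I would seek a solution of~\eqref{eq:u-var.key-problem} with $\kappa = \ee_j$ of the form $\sff(\sfs,u) = \phi_j(u)\ee_j(\sfs)$, $\sfV(\sfs) = v_j\ee_j(\sfs)$. Substitution reduces the system to a scalar constrained BVP on $(-1,1)$,
\begin{equation*}
-(\sfa\phi_j')' + \lambda_j\sfm\,\phi_j = \big(1-(\tfrac{n}{n'})'\big)v_j,\qquad -\sfa\phi_j' = -\tfrac{n}{n'}v_j\text{ at }u = \pm 1,\qquad \int_{-1}^{+1}\!\big(\phi_j + \tfrac{n}{n'}\phi_j'\big)\dd u = \sigma,
\end{equation*}
with scalar Lagrange multiplier $v_j \in \mathbb{R}$. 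This one-variable problem fits the variational framework of \Cref{sec:constr-elliptic} verbatim (with $\Gamma$ collapsed to a point), hence admits a unique solution $(\phi_j,v_j)$; uniqueness in \Cref{prop:solveP.vn} then identifies $\mathcal{F}\ee_j = \phi_j(u)\ee_j(\sfs)$ and $\mathcal{G}\ee_j = v_j\ee_j$. The mean part is trivial: for constant $\kappa \equiv \bar\kappa$, the pair $\sff \equiv \sigma\bar\kappa/2$, $\sfV \equiv 0$ solves~\eqref{eq:u-var.key-problem}, so $\mathcal{F}\bar\kappa = \sigma\bar\kappa/2$ and $\mathcal{G}\bar\kappa = 0$, both annihilated by $(-\Delta_\Gamma)^{k/2}$ for $k \geq 1$.

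Decomposing $\kappa = \bar\kappa + \sum_j \hat\kappa_j\ee_j$ and using linearity together with the continuity bounds \eqref{eq:hatG.cont}, the partial sums $\kappa_N := \bar\kappa + \sum_{j\le N}\hat\kappa_j\ee_j$ converge to $\kappa$ in $\Hd$ and yield, with convergence in $H$ and $\Hd^*$ respectively,
\begin{equation*}
\mathcal{F}\kappa = \tfrac{\sigma\bar\kappa}{2} + \sum_{j}\hat\kappa_j\phi_j(u)\ee_j(\sfs),\qquad \mathcal{G}\kappa = \sum_j\hat\kappa_j v_j\ee_j.
\end{equation*}
Interpreting $(-\Delta_\Gamma)^{k/2}$ as an unbounded tangential operator on $\Gamma\times[-1,1]$ via this eigenbasis (acting pointwise in $u$), the commutation identities
\begin{equation*}
(-\Delta_\Gamma)^{k/2}\mathcal{F}\kappa = \mathcal{F}\big((-\Delta_\Gamma)^{k/2}\kappa\big),\qquad (-\Delta_\Gamma)^{k/2}\mathcal{G}\kappa = \mathcal{G}\big((-\Delta_\Gamma)^{k/2}\kappa\big)
\end{equation*}
can be read off from the expansion. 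Since $\kappa\in H^{k+1}(\Gamma)$ implies $(-\Delta_\Gamma)^{k/2}\kappa\in \Hd$, applying \eqref{eq:Fkap.bd}--\eqref{eq:Gkap.bd} to this datum finishes the proof.

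The main obstacle is the separation-of-variables step: verifying well-posedness of the reduced one-dimensional problem for every $\lambda_j \geq 0$ and the convergence of the eigenfunction series in $H$. The former is an adaptation of the Lagrange-multiplier argument of \Cref{prop:solveP.vn}, using that $\int_{-1}^{+1}(\sfa|\phi'|^2 + \lambda_j\sfm|\phi|^2)\,\dd u$ is coercive on the affine constraint set (with the mean mode $\lambda=0$ treated separately by inspection); the latter follows from the bound $\|\mathcal{F}\kappa_N - \mathcal{F}\kappa_M\|_H \lesssim \|\kappa_N - \kappa_M\|_{\Hd}$ implied by \eqref{eq:Fkap.bd} together with $\mathcal{C}\mathcal{F}\kappa = \sigma\kappa$.
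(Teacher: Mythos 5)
Your proposal is correct and follows the same overall strategy as the paper: decompose $\kappa$ in the eigenbasis of $-\Delta_\Gamma$, exploit the fact that $\mathcal{F}$ and $\mathcal{G}$ are diagonal in this basis, and apply the continuity bounds~\eqref{eq:Fkap.bd}--\eqref{eq:Gkap.bd} to the truncated series, passing to the limit. The one substantive difference is \emph{how} the diagonal structure is established. You re-derive it constructively: posit the separated ansatz $\sff = \phi_j(u)\ee_j(\sfs)$, set up and solve the reduced one-dimensional constrained BVP for each $\lambda_j>0$, and invoke uniqueness from \Cref{prop:solveP.vn}. The paper instead appeals to the (forward-referenced) orthogonality statement of \Cref{l:G.diagonal}, which is proved by a short testing argument in the weak formulation: choosing $\vp = (\mathcal{F}\ee_k,\ee_l)_{L^2(\Gamma)}\ee_l$ with $k\neq l$, the right-hand side of~\eqref{eq:weakForm.vn} vanishes since $\mathcal{C}\vp = 0$, which forces $(\mathcal{F}\ee_k,\ee_l)_{L^2(\Gamma)}\equiv 0$. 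The paper's route is leaner because it never needs to re-verify well-posedness of the reduced ODE or to confirm that the ansatz $\phi_j(u)\ee_j(\sfs)$ lies in the abstract completion $H$ (your sketch leaves these as acknowledged technical gaps); your route has the virtue of making the one-dimensional structure explicit, which the paper in fact develops anyway in Section~\ref{ssec:symbol} (cf.~equations~\eqref{eq:k.gen.abstract}), but it duplicates part of the Lagrange-multiplier argument. One imprecision worth noting: the reduced problem is not the original framework ``with $\Gamma$ collapsed to a point''---that would kill the $\sfm|\nabla_\Gamma\sff|^2$ term entirely---but rather the one-dimensional problem with the $\lambda_j\sfm|\phi|^2$ term, which is the version you write down subsequently.
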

\begin{proof}
By hypothesis,  $\kappa-\av_\Gamma\kappa=\sum_{j\in \mathbb{N}}\kappa_j\ee_j$ for coefficients $\kappa_j$ satisfying
$\sum_{j\in \mathbb{N}}\lambda_j^{k+1}|\kappa_j|^2<\infty$.
Let $N\in \mathbb{N}$. Due to the linearity of the operator $\mathcal{\widehat{G}}$, we know that 
$\big(\sum_{j=1}^N\lambda^{k/2}_j\mathcal{F}\kappa_j\ee_j,\sum_{j=1}^N\lambda^{k/2}_j\mathcal{G}\kappa_j\ee_j\big)\in H\times H^1(\Gamma)^*$ is the solution to~\eqref{eq:var.abstract} with datum
$\sum_{j=1}^N\lambda^{k/2}_j\kappa_j\ee_j$. 
Thus, owing to Lemma~\ref{l:G.diagonal}, the estimates~\eqref{eq:hatG.cont} provide us with $N$-truncated versions of the bounds
\begin{align*}
\mathcal{E}((-\Delta_\Gamma)^{k/2}\mathcal{F}\kappa)^{1/2}&\lesssim \|(-\Delta_\Gamma)^{k/2}\kappa\|_{H^1(\Gamma)}
\\\|(-\Delta_\Gamma)^{k/2}\mathcal{G}\kappa\|_{H^1(\Gamma)^*}&\lesssim \|(-\Delta_\Gamma)^{k/2}\kappa\|_{H^1(\Gamma)}.
\end{align*}
Since $\|(-\Delta_\Gamma)^{k/2}\kappa\|_{H^1(\Gamma)}^2\sim\sum_{j\in \mathbb{N}}\lambda_j^{k+1}|\kappa_j|^2<\infty$, the asserted regularity follows in the limit $N{\to}\infty$.
\end{proof}
The regularity in the normal variable depends on the choice of the coefficients $m,n,A^2\tau$.

	\begin{remark}[Analyticity]\label{r:analyticity}
	In Section~\ref{ssec:symbol} we explicitly determine the operators $\mathcal{F},\mathcal{G}$ by computing their action on the basis $\{\ee_j\}_{j\in \mathbb{N}}$.  
	There, we will see that, for a specific choice of coefficients as in \Cref{ass:fsd}, $\mathcal{F}\ee_j$ is analytic in $u$ for all $j$  as long as the coefficient functions are analytic in $u$.
\end{remark}

\section{The (new) geometric evolution law}\label{sec:geom.law}

We now investigate the structural properties of the propagation operator 
$\mathcal{G}_\Gamma:=\mathcal{G}$ given by
\begin{align*}
\mathcal{G}_\Gamma:\kappa\mapsto -\frac12\Delta_\Gamma\int_{-1}^1 \sfm\,\mathcal{F}\kap\,\dd u,
\end{align*}
which, as we have seen in Proposition~\ref{prop:solveP.vn}, determines the interface dynamics via $\sfV_\Gamma=\mathcal{G}_\Gamma\kappa_\Gamma$.

Throughout this section, we assume the general hypotheses~\ref{hp:e.geom},~\ref{hp:e.coeff}, and~\ref{hp:tauA^2n.pos} from \Cref{sec:constr-elliptic}, ensuring that $\mathcal{G}_\Gamma: H^1(\Gamma)\to H^1(\Gamma)^*$ is well-defined,  and adopt the notations introduced in \Cref{ssec:notations.constr-ell}. Recall, in particular, the definition~\eqref{eq:def.m(u)/} of $\sfm=\sfm(u)$.
In the context of an evolving hypersurface, the hypotheses~\ref{hp:e.geom} on the geometry are to be understood pointwise in time.

\subsection{Gradient-flow structure}\label{ssec:nonlin.str}
 
Below, we will use, without further notice, the observation that the regularity property in Lemma~\ref{l:hreg.s} implies that 
$\mathcal{G}_\Gamma h\in L^2(\Gamma)$ for all $h\in H^2(\Gamma)$.

\begin{proposition}[Symmetry, invariance, and positivity of the propagation operator]\label{prop:gs}$ $
\begin{enumerate}
	\item\label{it:sym} \emph{Symmetry.} The operator $\mathcal{G}_\Gamma$ is symmetric with respect to $L^2(\Gamma)$ in the sense that 
	\begin{align}\label{eq:G.sym}
		(\mathcal{G}_\Gamma h,\kap)_{L^2(\Gamma)}=(h,\mathcal{G}_\Gamma\kap)_{L^2(\Gamma)}\qquad \text{for all } h,\kappa\in H^2(\Gamma).
	\end{align}
	\item \emph{Invariance.}
	 It holds that 
	 \begin{align}\label{eq:G.inv}
	 \mathcal{G}_\Gamma 1_\Gamma\equiv0,
	 \end{align}
	  where $1_\Gamma$ denotes the constant function on $\Gamma$ that is identically equal to $1$.
	\item\label{it:G.pos} \emph{Positivity.} It holds that 
	\begin{align}\label{eq:G.pos}
	(\mathcal{G}_\Gamma \kap,\kap)_{L^2(\Gamma)}\ge0\qquad \text{for all } \kappa\in H^2(\Gamma).
	\end{align}
 Furthermore, the equality $(\mathcal{G}_\Gamma \kap,\kap)_{L^2(\Gamma)}=0$ with $\kap\in H^2(\Gamma)$ holds true if and only if $\kap\equiv c$ on $\Gamma$ for some constant $c\in \mathbb{R}$.
 \item\label{it:upper.bd} \emph{Upper bound.} It holds that 
 \begin{align}\label{eq:G.upper-bd}
 \mathcal{G}_\Gamma\le -\frac\sigma\delta\Delta_\Gamma
 \end{align}
 in the sense that
 $(\mathcal{G}_\Gamma\kappa,\kappa)_{L^2(\Gamma)}\le (-\frac\sigma\delta\Delta_\Gamma\kappa,\kappa)_{L^2(\Gamma)}$  for all $\kappa\in H^2(\Gamma)$.
\end{enumerate}
\end{proposition}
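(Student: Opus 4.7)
My plan is to reduce all four claims to a single fundamental identity linking $\mathcal{G}_\Gamma$ to the symmetric bilinear form $(\cdot,\cdot)_{\mathcal{E}}$: for $\kappa\in H^2(\Gamma)$ and $h\in H^1(\Gamma)$,
\begin{equation}\label{eq:star}
\sigma\,\langle\mathcal{G}_\Gamma\kappa,h\rangle_{H^1(\Gamma)^*,H^1(\Gamma)}=(\mathcal{F}\kappa,\mathcal{F}h)_{\mathcal{E}}.
\end{equation}
To establish \eqref{eq:star} I would insert the admissible test function $\vp=\mathcal{F}h\in H$ into the variational identity \eqref{eq:Lf.F} satisfied by $(\mathcal{F}\kappa,\mathcal{G}_\Gamma\kappa)$, and then invoke $\mathcal{C}(\mathcal{F}h)=\sigma h$ from \eqref{eq:LV.F}. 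Since the right-hand side of \eqref{eq:star} is manifestly symmetric in $(\kappa,h)$, the symmetry claim \eqref{eq:G.sym} follows by interchanging the roles of $\kappa$ and $h$, provided that the duality pairing in \eqref{eq:star} can be recast as an $L^2$ inner product when $h\in H^2(\Gamma)$. This is exactly where I would invoke \Cref{l:hreg.s} with $k=1$ to deduce $(-\Delta_\Gamma)^{1/2}\mathcal{G}_\Gamma\kappa\in H^1(\Gamma)^*$, which together with the zero-mean property of $\mathcal{G}_\Gamma\kappa$ (immediate from the representation $\mathcal{G}_\Gamma\kappa=-\tfrac12\Delta_\Gamma\!\int_{-1}^{1}\sfm\,\mathcal{F}\kappa\,\dd u$ on the closed surface $\Gamma$) upgrades to $\mathcal{G}_\Gamma\kappa\in L^2(\Gamma)$.

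For the positivity statement \eqref{eq:G.pos}, I would specialise \eqref{eq:star} to $h=\kappa$, obtaining
\[
\sigma\,(\mathcal{G}_\Gamma\kappa,\kappa)_{L^2(\Gamma)}=(\mathcal{F}\kappa,\mathcal{F}\kappa)_{\mathcal{E}}\ge 0.
\]
The case of equality forces $\sqrt{\sfa}\,\pa_u\mathcal{F}\kappa=0$ and $\sqrt{\sfm}\,\nabla_\Gamma\mathcal{F}\kappa=0$ a.e.\ on $\Gamma\times[-1,1]$; since $\sfa,\sfm>0$ a.e.\ by \ref{hp:e.coeff} and \ref{hp:tauA^2n.pos}, this yields $\mathcal{F}\kappa\equiv c$ for some $c\in\mathbb{R}$, and the constraint \eqref{eq:u-var.constraint} then gives $2c=\sigma\kappa$ on $\Gamma$, whence $\kappa$ is constant. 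The reverse implication follows once invariance is in hand.

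To prove invariance \eqref{eq:G.inv}, I would note that $\sff^\star(\sfs,u)\equiv\sigma/2$ belongs to $M_{1_\Gamma}$ (since $\pa_u\sff^\star=0$ and $\int_{-1}^{1}\sff^\star\,\dd u=\sigma$) and achieves $\mathcal{E}(\sff^\star)=0$; by strict convexity it is the unique minimiser, so $\mathcal{F}(1_\Gamma)\equiv\sigma/2$ and $\int_{-1}^{1}\sfm\,\mathcal{F}(1_\Gamma)\,\dd u$ is constant on $\Gamma$. For the upper bound \eqref{eq:G.upper-bd} I would use an analogous competitor: for arbitrary $\kappa\in H^2(\Gamma)$, the function $\sff^\star(\sfs,u)=\tfrac{\sigma}{2}\kappa(\sfs)$ lies in $M_\kappa$, and minimality of $\mathcal{F}\kappa$ yields
\[
\mathcal{E}(\mathcal{F}\kappa)\;\le\;\mathcal{E}(\sff^\star)\;=\;\frac{\sigma^2}{8}\Bigl(\int_{-1}^{1}\sfm\,\dd u\Bigr)\|\nabla_\Gamma\kappa\|_{L^2(\Gamma)}^2\;=\;\frac{\sigma^2}{2\delta}\|\nabla_\Gamma\kappa\|_{L^2(\Gamma)}^2,
\]
where I used $\int_{-1}^{1}\sfm\,\dd u=4/\delta$ from \eqref{eq:sig-del.u}. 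Plugging this into \eqref{eq:star} with $h=\kappa$ and recognising $\|\nabla_\Gamma\kappa\|_{L^2(\Gamma)}^2=(-\Delta_\Gamma\kappa,\kappa)_{L^2(\Gamma)}$ gives \eqref{eq:G.upper-bd}.

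The only genuine technicality I anticipate is the $L^2$-regularity upgrade needed to justify replacing the abstract pairing in \eqref{eq:star} by an honest $L^2$ inner product when both arguments lie in $H^2(\Gamma)$; this rests on \Cref{l:hreg.s} together with the zero-mean observation noted above. All remaining steps are essentially a mechanical exploitation of \eqref{eq:star} and of the variational characterisation in \Cref{prop:solveP.vn}.
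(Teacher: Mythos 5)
Your proposal is essentially correct and follows the same structural route as the paper: the whole proof hinges on the identity $\sigma\,\langle\mathcal{G}_\Gamma\kappa,h\rangle=(\mathcal{F}\kappa,\mathcal{F}h)_{\mathcal{E}}$, which you obtain by testing~\eqref{eq:Lf.F} against $\vp=\mathcal{F}h$ and using $\mathcal{C}(\mathcal{F}h)=\sigma h$ from~\eqref{eq:LV.F}. This is precisely the paper's key identity~\eqref{eq:reprG.F} (the paper first records the auxiliary orthogonality $(\mathcal{F}\kap,\mathcal{F}h-\tfrac\sigma2h)_{\mathcal{E}}=0$ and then substitutes, but the resulting formula is the same). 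Symmetry and positivity, including the characterisation of equality, are deduced exactly as in the paper, and your handling of the $L^2$-upgrade via Lemma~\ref{l:hreg.s} plus the zero-mean property mirrors the paper's prefatory remark before Proposition~\ref{prop:gs}.

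Where you deviate is in the subsidiary claims. For invariance, you explicitly identify the minimiser $\mathcal{F}(1_\Gamma)\equiv\sigma/2$ (unique because $\mathcal{E}\ge 0$ vanishes there and the constraint pins down the constant) and then read off $\mathcal{G}_\Gamma 1_\Gamma=-\tfrac12\Delta_\Gamma(\text{const})=0$; the paper instead goes through the already-established symmetry and the key identity, computing $(h,\mathcal{G}1_\Gamma)_{L^2}=\tfrac12(\mathcal{F}h,1_\Gamma)_{\mathcal{E}}=0$. For the upper bound, you use the competitor $\sff^\star=\tfrac\sigma2\kappa\in M_\kappa$ and minimality of $\mathcal{F}\kappa$ to bound $(\mathcal{F}\kappa,\mathcal{F}\kappa)_{\mathcal{E}}\le\tfrac{\sigma^2}{\delta}\|\nabla_\Gamma\kappa\|_{L^2}^2$ and then invoke the key identity with $h=\kappa$; the paper simply cites the estimate~\eqref{eq:Gkap.bd}, which was derived earlier in the proof of Proposition~\ref{prop:solveP.vn} via a Cauchy--Schwarz/absorption argument. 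Both routes yield the same constant $\sigma/\delta$. Your competitor argument is arguably slightly more self-contained since it does not rely on the a priori bound~\eqref{eq:Gkap.bd}, and the explicit identification of $\mathcal{F}(1_\Gamma)$ is a nice direct way to see the invariance; but there is no essential gain or loss either way.
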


\begin{remark}[Gradient structure]
	Since $-\kappa_\Gamma$ can be obtained by normal variation of the surface area functional,
	 the properties~\ref{it:sym},~\ref{it:G.pos} asserted in \Cref{prop:gs}, combined with the conservation law in Corollary~\ref{cor:vmc-flow}~\ref{it:vol.pres} below, mean that, formally, the interface evolution law $\sfV_\Gamma=\mathcal{G}_\Gamma\kappa_\Gamma$ has the structure of a volume-preserving gradient flow of the surface area functional, where the {\em formal} metric is induced by $\big(\eval{{\mathcal{G}_\Gamma}}_{{1_\Gamma}^\perp}\big)^{-1}$. 
\end{remark}

\begin{proof}[Proof of \Cref{prop:gs}] Abbreviate $\mathcal{G}=\mathcal{G}_\Gamma$.
	The proof relies on the characterisation of the solution operator $\mathcal{\widehat{G}}=(\mathcal{F},\mathcal{G})$ in Proposition~\ref{prop:solveP.vn}.
	The starting point is the equality
	\[\mathcal{C}\big(\mathcal{F}h-\frac\sigma2h\big)=0\qquad\text{for all }h\in H^1(\Gamma),\]
	which follows from~\eqref{eq:LV.F} and the definition of $\mathcal{C}$.
	Using~\eqref{eq:Lf.F} and the fact that $D\mathcal{E}(\sff)\vp=(\sff,\vp)_{\mathcal{E}}$, it allows us to deduce that
	\begin{align}\label{eq:test.with.H0}
		\Big(\mathcal{F}\kap,(\mathcal{F}h-\frac\sigma2h)\Big)_{\mathcal{E}}=0\qquad\text{ for all }\kap,h\in H^2(\Gamma).
	\end{align}
From~\eqref{eq:test.with.H0} and equation~\eqref{eq:Lf.F}, we then infer the key identity
\begin{align}\label{eq:reprG.F}
	\big(\mathcal{F}\kap,\mathcal{F}h\big)_{\mathcal{E}}=\big(\mathcal{F}\kap,\frac\sigma2h\big)_{\mathcal{E}}=\la\mathcal{G}\kap,\mathcal{C}(\tfrac\sigma2h)\ra_{H^1(\Gamma)^*,H^1(\Gamma)}
	=\sigma(\mathcal{G}\kap,h)_{L^2(\Gamma)}.
\end{align}
	Thus, assertion~\eqref{eq:G.sym} resp.~\eqref{eq:G.pos} follows from the symmetry resp.\ the non-negativity of the bilinear form $(\cdot,\cdot)_{\mathcal{E}}$ combined with the positivity of $\sigma>0$.
	
	To show the invariance property, we compute for $h\in H^2(\Gamma)$ fixed but arbitrary, using the symmetry of $\mathcal{G}_\Gamma$,~\eqref{eq:Lf.F}, and a calculation as in~\eqref{eq:reprG.F}:
	\begin{align*}
	(h,\mathcal{G} 1_\Gamma)_{L^2(\Gamma)}
	=(\mathcal{G} h, 1_\Gamma)_{L^2(\Gamma)}
	=\tfrac12\la \mathcal{G} h, \mathcal{C}1_\Gamma\ra_{H^1(\Gamma)^*,H^1(\Gamma)}
=\tfrac12(\mathcal{F}h,1_\Gamma)_{\mathcal{E}}=0.
	\end{align*}
Since $h\in H^2(\Gamma)$ was arbitrary, we infer that $\mathcal{G} 1_\Gamma\equiv0$ on $\Gamma$. Alternatively, this assertion can be deduced from~\eqref{eq:Gkap.bd}.

Suppose now that $(\mathcal{G}  \kap,\kap)_{L^2(\Gamma)}=0$ for some $\kap\in H^2(\Gamma)$. From the representation~\eqref{eq:reprG.F} and the definition of the bilinear form $(\cdot,\cdot)_{\mathcal{E}}$, we conclude that 
$\pa_u(\mathcal{F}\kappa)=0$, $\nabla_\Gamma(\mathcal{F}\kap)=0$ a.e.\ on $\Gamma\times [-1,1]$. 
Consequently, there exists $\tilde c\in \mathbb{R}$ such that $\mathcal{F}\kap=\tilde c$ a.e.\ on $\Gamma\times [-1,1]$, and thus $\sigma\kap=\mathcal{C}(\mathcal{F}\kap)=2\tilde c$. Hence $\kap\equiv c$ for $c:=\frac{2}{\sigma}\tilde c\in\mathbb{R}$.
The converse direction that $(\mathcal{G} c,c)_{L^2(\Gamma)}=0$ for constant functions $c$ follows from~\eqref{eq:G.inv}.

The upper bound is an immediate consequence of inequality~\eqref{eq:Gkap.bd}.
\end{proof}

Having established the relevant structural properties of the linear operator $\mathcal{G}_\Gamma$, we may now deduce volume preservation and area decrease of the associated geometric flow along classical solutions.

\begin{corollary}[Volume-preserving curvature flow]\label{cor:vmc-flow}
	Let $\Gamma=\cup_{t\in I}\{t\}{\times}\Gamma(t)$ be a smoothly evolving hypersurface governed by the geometric law \[\sfV_\Gamma=\mathcal{G}_\Gamma\kappa_\Gamma.\]
	Then:
	\begin{enumerate}[label=(\roman*)]
		\item\label{it:vol.pres} \emph{Volume preservation.}  $\frac{\dd}{\dd t}\mathcal{H}^{d}(\Om^-)=0$, where $\Om^-(t)$ denotes the domain enclosed by $\Gamma(t)$. 
		\item\label{it:area.decre} \emph{Area decrease.} $\frac{\dd}{\dd t}\mathcal{H}^{d-1}(\Gamma)\le0$.
		\item \emph{Equilibria.} $\sfV_\Gamma=0$ if and only if $\kappa_\Gamma$ is constant, i.e.\ if $\Gamma(t)\equiv S_{r}^{d-1}(x)$ is a Euclidean sphere.
	\end{enumerate}
\end{corollary}
\begin{proof}
The assertions of \Cref{cor:vmc-flow} are consequences of the properties of $\mathcal{G}_\Gamma$ obtained in \Cref{prop:gs}, see e.g.~\cite{PruessSimonett_2016}. A short derivation is provided below for completeness:

	\textit{Re (i):}
	We compute, using the transport theorem for moving domains (cf.~\cite[Chapter 2.5.5]{PruessSimonett_2016}), the symmetry property~\eqref{eq:G.sym} of $\mathcal{G}_\Gamma$, and the invariance~\eqref{eq:G.inv},
	\begin{align*}
		\frac{\dd}{\dd t}\int_{\Om^-} 1\,\dd x = \int_\Gamma \sfV_\Gamma\,\dd \mathcal{H}^{d-1}=(\mathcal{G}_\Gamma \kap_\Gamma,1_\Gamma)_{L^2(\Gamma)}
		=(\kap_\Gamma,\mathcal{G}_\Gamma 1_\Gamma)_{L^2(\Gamma)}=0.
	\end{align*}
	
		\textit{Re (ii):}
		It follows from the transport theorem for moving hypersurfaces (cf.~\cite[Chapter 2.5.4]{PruessSimonett_2016}) and the positivity of $\mathcal{G}_\Gamma$ (cf.~\cref{it:G.pos} in \Cref{prop:gs}) that the surface area functional is non-increasing along solutions
		\begin{align*}
			\frac{\dd}{\dd t}\int_{\Gamma}1\,\dd\hms &= -\int_\Gamma \sfV_\Gamma\,\kappa_\Gamma\,\dd\hms
			= -(\mathcal{G}_\Gamma\kappa_\Gamma,\kappa_\Gamma)_{L^2(\Gamma)}\le 0
		\end{align*}
		with strict inequality unless $\kap_\Gamma=c$ for some $c\in \mathbb{R}$.
		
		\textit{Re (iii):} It follows from the second part of \cref{it:G.pos} in  \Cref{prop:gs} that $\mathcal{G}_\Gamma\kap_\Gamma=\sfV_\Gamma=0$ is equivalent to $\kap_\Gamma\equiv c\in \mathbb{R}$. Combined with the properties~\ref{hp:e.geom} of the hypersurface $\Gamma(t)$ and 
		Aleksandrov's characterisation of closed connected $C^2$ hypersurfaces with constant mean curvature, embedded in $\mathbb{R}^\dimx$, (cf.~\cite{Aleksandrov_1956}), this amounts to $\Gamma(t)$ being a sphere.
	
\end{proof}

\subsection{Spectral representation of the propagation operator}\label{ssec:symbol}

Our next goal is to explicitly compute the action of the operator $\mathcal{G}_\Gamma:\kappa\mapsto -\frac12\Delta_\Gamma\int_{-1}^1 \sfm\sff\,\dd u$
in terms of $-\Delta_\Gamma$. 
In view of the invariance property $\mathcal{G}_\Gamma 1_\Gamma\equiv0$, it suffices to determine $\mathcal{G}_\Gamma$ on functions $\kappa:\Gamma\to\mathbb{R}$ with $\int_\Gamma\kappa=0$.
The present spectral approach takes advantage of the observation that the operator  $\mathcal{G}_\Gamma$ is diagonal with respect to the orthonormal basis
$\{\ee_k\}_{k\in \mathbb{N}}$ of eigenfunctions of $-\Delta_\Gamma$, as shown in Lemma~\ref{l:G.diagonal}.
We further observe that, in equation~\eqref{eq:k.gen.abstract} (henceforth to be understood with $\zeta_k=\zeta(\lambda_k)$), which determines $f_k:=(\mathcal{F}\ee_k,\ee_k)_{L^2(\Gamma)}$, the underlying geometry is solely encoded in the eigenvalue $\lambda_k$ of the minus Laplace--Beltrami operator on $\Gamma$ with respect to the $k$-th basis function $\ee_k$.
In the following, we will determine the solution $f_k$ to~\eqref{eq:k.gen.abstract} for specific choices of $m,n, A^2\tau$,  see hypotheses~\ref{hp:s.gen},~\ref{hp:s.a=1/m} in \Cref{ssec:explicit} below.
By virtue of identities~\eqref{eq:GGam.spectral},~\eqref{eq:zetak}, this will allow us to specify $\zeta(\lambda_k)$, and thus the propagation operator. A key interest lies in identifying the asymptotic growth law of $\zeta(\lambda)$ as $\lambda\to\infty$.	
We emphasise that the explicit solution $(\sff,\sfV)$ to be constructed below agrees with the unique weak solution of Proposition~\ref{prop:solveP.vn}.

\subsubsection{Problem formulation} 
\label{ssec:explicit}
Let us first list the hypotheses under which the subsequent analysis is valid.
\paragraph{Hypotheses.} 
\begin{enumerate}[label=(s\arabic*)]
	\item\label{hp:s.gen} Let~\ref{hp:e.geom},~\ref{hp:e.coeff},~\ref{hp:tauA^2n.pos} 
	as well as~\ref{hp:n.ndeg}  be in force.
	\item\label{hp:s.a=1/m} 
Assume~\ref{hp:am} with $\tilde a\equiv\text{const}>0$ (required as of \Cref{sssec:a=1/m}), and let $m$ be even.
\end{enumerate}
The first condition in~\ref{hp:s.a=1/m} amounts to requiring that $\sfa=\frac{\text{const}}{\sfm}$. The hypothesis that $m$ (or equivalently $\sfm$) be even has been made to simplify the presentation and can easily be removed.

Notice that the above assumptions are compatible with those in Assertion~\ref{ass:fsd}. 

\paragraph{Strong formulation.}
Upon an integration by parts in equation~\eqref{eq:k.gen.abstract.e} 
and in the constraint~\eqref{eq:k.gen.abstract.c}, problem~\eqref{eq:k.gen.abstract} may be formulated as follows. Determine for $\lambda=\lambda_k>0$ the solution couple $f=f_{\lambda},\vlo=\zeta(\lambda)$ of the system:
\begin{subequations}\label{eq:weak.ibp.all}
\begin{align}\label{eq:weak.a.lambda}
\int_{-1}^1\big((-\pa_u(\sfa\pa_u f)+\sfm\lambda f)\phi\big)\,\dd u
=\int_{-1}^1\frac{n''n}{(n')^2}\,\phi\,\dd u \,\vlo+ \bigg[(-\sfa\pa_u f+\frac{n}{n'}\vlo)\phi\bigg]_{-1}^{1} \qquad
\end{align}
for all $\phi\in C^\infty([-1,1])$ with $\phi'\in C^\infty_c((-1,1))$, and 
\begin{align}\label{eq:constr.expl}\int_{-1}^1\frac{n''n}{(n')^2} f\,\dd u+\bigg[\frac{n}{n'}f\bigg]_{-1}^{1}=\sigma.
\end{align}
\end{subequations}
Problem~\eqref{eq:weak.ibp.all} can be decomposed into three subproblems:
\begin{subequations}\label{eq:inhom.all.expl}
\begin{enumerate}
	\item\label{it:in} First considering $\phi\in C^\infty_c((-1,1))$, reduces~\eqref{eq:weak.a.lambda} to the second-order differential equation 
	\begin{align}\label{eq:inhom.am}
		-\pa_u(\sfa\pa_u f)+\sfm\lambda f=\frac{n''n}{(n')^2}\,\vlo
	\end{align}
	in the pointwise sense.
	\item\label{it:bc} Taking now into account that in~\eqref{eq:weak.a.lambda} general test functions $\phi\in C^\infty([-1,1])$ with $\phi'\in C^\infty_c((-1,1))$ are admitted, yields the associated boundary conditions on $(-1,1):$
	\begin{align}\label{eq:bc.am}\sfa\pa_u f=\frac{n}{n'}\,\vlo \quad\text{ for }u\in\{\pm1\}.
	\end{align}
	\item\label{it:constr} The constraint is taken as stated, i.e.
	\begin{align}\label{eq:constr.am}\int_{-1}^1\frac{n''n}{(n')^2} f\,\dd u+\bigg[\frac{n}{n'}f\bigg]_{-1}^{1}=\sigma.
		\end{align}
\end{enumerate}
\end{subequations}
If $f$ and $\sfa\pa_u f$ are sufficiently regular, the three equations~\eqref{eq:inhom.am}--\eqref{eq:constr.am} are equivalent to~\eqref{eq:k.gen.abstract}.

Our strategy is now to first compute the general solution $f$ to~\cref{it:in} for given $\vlo$. This solution has two degrees of freedom, denoted by $b_1,b_2\in\mathbb{R}$, which we then specify in such a way that $f$ fulfils the boundary conditions in \cref{it:bc}. In the last step, we fix $\vlo$ in such a way that \cref{it:constr} is fulfilled.

\subsubsection{Explicit solution}
\label{sssec:a=1/m}

Our explicit approach below takes advantage of the identity
$\sfa=\frac{\tilde a}{\sfm}$ with $\tilde a\equiv\text{const}>0$ imposed in 
hypothesis~\ref{hp:s.a=1/m}. To simplify the presentation, we suppose that $\tilde a=1$. The extension to the case of general $\tilde a\equiv\text{const}>0$ is straightforward by suitable rescalings, see also \Cref{ssec:vanishing.slope}. 

For $\sfa=1/\sfm$ the change of variables $r=\alpha(u)$,  $\alpha(u):=\int_0^u \sfm(u')\,\dd u'$ brings the homogeneous equation 
\begin{align}\label{eq:hom.a.m}
-\pa_u(\sfa\pa_u f)+\sfm\lambda f=0
\end{align}
into the constant-coefficient form 
\begin{align}\label{eq:const.coeff}
	-\pa_r^2\hat f+\lambda \hat f=0.
\end{align}
Equation~\eqref{eq:const.coeff} has two explicit linearly independent solutions $\hat f_\pm(r)=\frac{1}{\lambda^{1/4}}\ee^{\pm\sqrt{\lambda}r}$. 
Returning to the original variables,  the solutions $f_\pm=\hat f_\pm\circ\alpha$ to the homogeneous equation~\eqref{eq:hom.a.m} take the form 
\begin{align*}
	f_+(u)=\frac{1}{\lambda^{1/4}}\ee^{\sqrt{\lambda}\alpha(u)},\qquad f_-(u)=\frac{1}{\lambda^{1/4}}\ee^{-\sqrt{\lambda}\alpha(u)}.
\end{align*}
For later use, we note that, since $m,\sfm$ are even, the function $\alpha$ is odd.

The Wronskian $W$ associated to $(f_+,f_-)$ is given by
\[W=\pa_u f_+f_--\pa_u f_-f_+=2\sfm.\]
Let $\tilde F:=\sfm F:=\sfm \frac{nn''}{(n')^2}\,\vlo$.
Then $\tilde F/W=\frac12\ell\vlo$, where
\begin{align}\label{eq:def.ell}
\ell(u):=\frac{nn''}{(n')^2}.
\end{align}

We assert that, using the method of \textit{variation of parameters}, the general solution to the inhomogeneous equation~\eqref{eq:inhom.am} can be written in the form 
\begin{align}\label{eq:inhom.f.gen.n}
	f(u)=\Big(-f_+(u)\int_{1}^u f_-\tfrac{\ell}{2}\dd u'+f_-(u)\int_{-1}^u f_+\tfrac{\ell}{2}\dd u' +\frac{b_1}{\lambda^{1/4}}f_+(u)\ee^{-\sqrt{\lambda}\alpha(1)}+\frac{b_2}{\lambda^{1/4}}f_-(u)\ee^{-\sqrt{\lambda}\alpha(1)}\Big)\vlo,\qquad 
\end{align}
where $b_1,b_2\in \mathbb{R}$ are free parameters.
For convenience, we provide the calculations showing the  solution property:
first we compute, using~\eqref{eq:inhom.f.gen.n},
\begin{multline}\label{eq:a.df}
	\sfa\pa_u f=\Big(-\ee^{\sqrt{\lambda}\alpha(u)}\int_{1}^u  \ee^{-\sqrt{\lambda}\alpha}\tfrac{\ell}{2}\dd u'-\ee^{-\sqrt{\lambda}\alpha(u)}\int_{-1}^u \ee^{\sqrt{\lambda}\alpha}\tfrac{\ell}{2}\dd u' \\+b_{1}\ee^{-\sqrt{\lambda}(\alpha(1)-\alpha(u))}-b_{2}\ee^{-\sqrt{\lambda}(\alpha(1)+\alpha(u))}\Big)\vlo.
\end{multline}
Differentiating once more with respect to $u$, we deduce
\begin{multline*}
	-\pa_u(\sfa\pa_u f)
	=-\lambda^{1/2}\sfm\Big(-\ee^{\sqrt{\lambda}\alpha(u)}\int_{1}^u  \ee^{-\sqrt{\lambda}\alpha}\tfrac{\ell}{2}\dd u'+\ee^{-\sqrt{\lambda}\alpha(u)}\int_{-1}^u \ee^{\sqrt{\lambda}\alpha}\tfrac{\ell}{2}\dd u' 
	\\
	+b_{1}\ee^{-\sqrt{\lambda}(\alpha(1)-\alpha(u))}+b_{2}\ee^{-\sqrt{\lambda}(\alpha(1)+\alpha(u))}\Big)\vlo
	+\ell\vlo.
\end{multline*}
Observing that
\begin{multline*}
	\Big(-\ee^{\sqrt{\lambda}\alpha(u)}\int_{1}^u  \ee^{-\sqrt{\lambda}\alpha}\tfrac{\ell}{2}\dd u'+\ee^{-\sqrt{\lambda}\alpha(u)}\int_{-1}^u \ee^{\sqrt{\lambda}\alpha}\tfrac{\ell}{2}\dd u' 
	\\+b_{1}\ee^{-\sqrt{\lambda}(\alpha(1)-\alpha(u))}+b_{2}\ee^{-\sqrt{\lambda}(\alpha(1)+\alpha(u))}\Big)\vlo=\lambda^{1/2} f,
\end{multline*}
we deduce that $f$ chosen according to~\eqref{eq:inhom.f.gen.n} satisfies, in the pointwise sense, the equation~\eqref{eq:inhom.am}, i.e.
\[	-\pa_u(\sfa\pa_u f)+\lambda \sfm f=\ell\vlo\]
with $\ell$ given by~\eqref{eq:def.ell}.

The parameters $b_1,b_2$ and $\vlo$ will now be fixed in such a way that $f=f_k$ fulfils all remaining properties, which will ensure that $f_k\ee_k$ coincides with the unique weak solution $\sff=\mathcal{F}\ee_k$ constructed in Proposition~\ref{prop:solveP.vn} for data $\kappa=\ee_k$.
We recall that $\zeta>0$, and thus $\zeta\not=0$, which also follows from condition~\eqref{eq:constr.am} by virtue of
 $\sigma>0$.
Let us first impose the boundary conditions~\eqref{eq:bc.am}.
We abbreviate 
\begin{align}\label{eq:def.c} 
	c(u)=\frac{n(u)}{n'(u)}.
\end{align}
Then, using~\eqref{eq:a.df}, condition~\eqref{eq:bc.am} turns into the system
\begin{align*}
	-\ee^{-\sqrt{\lambda}\alpha(1)}\int_{-1}^{1} \ee^{\sqrt{\lambda}\alpha}\tfrac{\ell}{2}\dd u' +b_{1}-b_{2}\ee^{-2\sqrt{\lambda}\alpha(1)}&= c(1)
	\\	\ee^{\sqrt{\lambda}\alpha(-1)}\int_{-1}^{1} \ee^{-\sqrt{\lambda}\alpha}\tfrac{\ell}{2}\dd u'
	+b_{1}\ee^{-2\sqrt{\lambda}\alpha(1)}-b_{2}&= c(-1).
\end{align*}
Define the $2{\times}2$-matrix
\begin{align*}
	M:=\begin{pmatrix}
		1&-\ee^{-2\sqrt{\lambda}\alpha(1)}
		\\ \ee^{-2\sqrt{\lambda}\alpha(1)}&-1
	\end{pmatrix}.
\end{align*}
Note that $M$ is invertible for $\lambda>0$.
Thus, condition~\eqref{eq:bc.am} uniquely determines $b=(b_1,b_2)\in \mathbb{R}$ by $M b=\Mb$, where 
\begin{align*}
	\Mb=\begin{pmatrix}
		c(1)+\ee^{-\sqrt{\lambda}\alpha(1)}\int_{-1}^{1} \ee^{\sqrt{\lambda}\alpha}\tfrac\ell2\dd u'
		\\c(-1)-\ee^{\sqrt{\lambda}\alpha(-1)}\int_{-1}^{1} \ee^{-\sqrt{\lambda}\alpha}\tfrac\ell2\dd u'
	\end{pmatrix}.
\end{align*}
We next estimate the asymptotic behaviour of $p$ as $\lambda\to\infty$.
Owing to~\ref{hp:n.ndeg}, the factor $\frac{nn''}{(n')^2}$
 appearing in the definition of $\ell$ (cf.~\eqref{eq:def.ell}) is bounded: $C_n:=\sup_{u\in[-1,1]}\big|\frac{n(u)n''(u)}{(n'(u))^2}\big|<\infty$. Furthermore,
$\alpha(u)=\int_0^u \sfm(\tilde u)\,\dd\tilde u$ is odd and increasing with $\max_{[-1,1]}\alpha=\alpha(1)$. Therefore,
\begin{align*}
	\bigg|\ee^{-\sqrt{\lambda}\alpha(1)}\int_{-1}^{1} \ee^{\pm\sqrt{\lambda}\alpha(u')}\tfrac{\ell(u')}{2}\dd u'\bigg|
	&\le C_n\int_0^{1} \ee^{-\sqrt{\lambda}\int_{u'}^{1}\sfm(\tilde u)\,\dd\tilde u}\,\dd u'.
\end{align*}
Definition~\eqref{eq:def.m(u)/} and hypothesis~\ref{hp:e.coeff} imply that $\sfm(u)=\frac{m(u)}{\sqrt{1-u^2}}\gtrsim(1-u)^{i-\frac12}$ on $(0,1)$, and hence $\int_{u'}^{1}\sfm(\tilde u)\,\dd \tilde u\gtrsim(1-u')^{i+\frac12}$.
We thus obtain, for a small fixed constant $\delta>0$,
\begin{align*}
	\int_0^{1} \ee^{-\sqrt{\lambda}\int_{u'}^{1}\sfm(\tilde u)\,\dd\tilde u}\,\dd u'
	&\lesssim 
	\int_0^{1} \ee^{-\delta\sqrt{\lambda}(1-u')^{i+\frac12}}\dd u'
	\\&\lesssim
	\lambda^{-\frac{1}{2(i+\frac12)}}	\int_0^{\lambda^{\frac{1}{2(i+\frac12)}}} \ee^{-\delta u^{i+\frac12}}\,\dd u\lesssim
	\lambda^{-\frac{1}{2i+1}},
\end{align*}
where, in the second step, we employed the change of variables $u=\lambda^{\frac{1}{2(i+\frac12)}}(1-u')$.

In combination, the last two estimates show that 
\begin{align}\label{eq:bound.err.i}
	\bigg|\ee^{-\sqrt{\lambda}\alpha(1)}\int_{-1}^{1} \ee^{\pm\sqrt{\lambda}\alpha}\tfrac\ell2\dd u'\bigg|
	&\le C\lambda^{-\frac{1}{2i+1}}
\end{align}
for a constant $C\in(0,\infty)$ that is independent of $\lambda$.
Therefore, as $\lambda\to\infty$,
\begin{align*}
	\Mb_1=c(1)+O(\lambda^{\;-\errfi}),\qquad \Mb_2=c(-1)+O(\lambda^{\;-\errfi}).
\end{align*}
Since $M=\diag(1,-1)+O(\ee^{-2\sqrt{\lambda}\alpha(1)})$
 as $\lambda\to\infty$, 
we conclude that, as $\lambda\to\infty$,
\begin{align}\label{eq:b.asympt.lambda}
	b_1=c(1)+O(\lambda^{\;-\errfi}),\qquad b_2=-c(-1)+O(\lambda^{\;-\errfi}).
\end{align} 
It remains to determine $\vlo$ in such a way that~\eqref{eq:constr.am} holds true. 
To this end, let us compute the value of $\mathcal{C}\tilde f =\int_{-1}^1\frac{n''n}{(n')^2} \tilde f\,\dd u+\big[\frac{n}{n'}\tilde f\big]_{-1}^{1}$, where 
(cf.~\eqref{eq:inhom.f.gen.n})
\begin{align*}
	\tilde f:=f/\vlo=-f_+(u)\int_{1}^u f_-\frac\ell2\dd u'+f_-(u)\int_{-1}^u f_+\frac\ell2\dd u' +\frac{b_1}{\lambda^{1/4}}f_+(u)\ee^{-\sqrt{\lambda}\alpha(1)}+\frac{b_2}{\lambda^{1/4}}f_-(u)\ee^{-\sqrt{\lambda}\alpha(1)}.
\end{align*}
Reasoning similarly as in the derivation of the bound~\eqref{eq:bound.err.i} and using~\eqref{eq:b.asympt.lambda}, we find
\begin{align}\label{eq:est.main.constr}\big[\frac{n}{n'}\tilde f\big]_{-1}^{1}=\big(c(1)b_1-c(-1)b_2\big)\,\lambda^{-\frac12}+O(\lambda^{-\frac12-\errfi})=c_*\lambda^{-1/2}+O(\lambda^{-\frac12-\errfi})
\end{align}
with 
\begin{align*}
	c_*:=c(1)^2+c(-1)^2>0.
\end{align*}
Furthermore, we assert that 
\begin{align}\label{eq:est.remainder.constr}
	\Big|\int_{-1}^1\frac{n''n}{(n')^2} \tilde f\,\dd u\Big|
	\lesssim\lambda^{-\frac12-\errfi}.
\end{align} 
\begin{proof}[Proof of the bound~\eqref{eq:est.remainder.constr}]
	We estimate 
	\[
	\Big|\int_{-1}^1\frac{n''n}{(n')^2} \tilde f\,\dd u\Big|
	\lesssim
	\int_{-1}^1 |\tilde f|\,\dd u
	\lesssim \lambda^{-{1/2}}R_1+\lambda^{-{1/2}}R_2,
	\]
	with the non-negative terms $R_i, R_{i,j}\ge0$, $i,j=1,2$, 
	\begin{align*}R_1:=R_{1,1}+R_{1,2}
		:=\int_{-{1}}^{1} \ee^{\sqrt{\lambda}\alpha(u)}\int_u^{1} \ee^{-\sqrt{\lambda}\alpha}\dd u'\dd u
		+\int_{-{1}}^{1} \ee^{-\sqrt{\lambda}\alpha(u)}\int_{-1}^u \ee^{\sqrt{\lambda}\alpha}\dd u'\dd u,
	\end{align*}
	and 
\begin{align*}R_2&:=R_{2,1}+R_{2,2}
	:=\int_{-{1}}^{1} \ee^{-\sqrt{\lambda}(\alpha(1)-\alpha(u))}\dd u
	+\int_{-{1}}^{1} \ee^{-\sqrt{\lambda}(\alpha(1)+\alpha(u))}\dd u.
\end{align*}
Each of the two summands of $R_2$ can be bounded similarly as~\eqref{eq:bound.err.i} giving
\begin{align*}
	R_2\lesssim \lambda^{-\frac{1}{2i+1}}.
\end{align*}
We next turn to $R_{1,1}:$
\begin{align*}
	R_{1,1}&=\int_{-{1}}^{1} 
	\int_u^{1} \ee^{-\sqrt{\lambda}(\alpha(u')-\alpha(u))}\dd u'\dd u
	\\&=\int_{-{1}}^{1} 
	\int_u^{1} \ee^{-\sqrt{\lambda}\int^{u'}_u \sfm(\tilde u)\,\dd\tilde u}
	\dd u'\dd u=:I_1+I_2+I_3,
\end{align*}
where in the last line we split the double-integal in three parts corresponding to: \\
$I_1:$ $u>0$;\\ $I_2:$ $u<0, u'>0$;\\
$I_3:$ $u<0,u'<0$.\\
Since $m(u)\gtrsim (1-u)^{i-\frac12}$ for $u\in (0,1)$, there exists $\delta>0$ such that 
\begin{align*}
	I_1&:=\int_{0}^{1} 
	\int_u^{1} \ee^{-\sqrt{\lambda}\int^{u'}_u \sfm(\tilde u)\,\dd\tilde u}
	\dd u'\,\dd u
	\\&\lesssim \int_{0}^{1} 
	\int_u^{1} \ee^{-\delta\sqrt{\lambda}\big(
		(1-u)^{i+\frac12}-(1-u')^{i+\frac12}
		\big)
	}
	\dd u'\,\dd u
\end{align*}
Upon changing variables $\hat u:=\lambda^{\frac{1}{2i+1}}(1-u)$, $\bar u=\lambda^{\frac{1}{2i+1}}(1-u')$, we obtain 
\begin{align*}
	I_1&\lesssim \lambda^{-\frac{2}{2i+1}}
	\int_{0}^{\lambda^\frac{1}{2i+1}} 
	\int_0^{\hat u} \ee^{-\delta(
		\hat u^{i+\frac12}-\bar u^{i+\frac12})
	}
	\,\dd\bar u\dd\hat u
	\\&\le \lambda^{-\frac{2}{2i+1}}
	\int_{0}^{\lambda^\frac{1}{2i+1}} 
	\bigg(	\int_0^1 \ee^{-\delta(
		\hat u^{i+\frac12}-\bar u^{i+\frac12})
	}
	 \,\dd\bar u
	+	\ee^{-\delta
		\hat u^{i+\frac12}}\int_1^{\max\{\hat u,1\}} \ee^{\delta\bar u^{i+\frac12}}
	\bar u^{i-\frac12}\,\dd\bar u
	\bigg)
	\,\dd\hat u
	\\&\le \lambda^{-\frac{2}{2i+1}}
	\int_{0}^{\lambda^\frac{1}{2i+1}} 
	\big(C_1+C_2	\big)\,\dd\hat u
	\\&\lesssim \lambda^{-\frac{1}{2i+1}},
\end{align*}
where in the penultimate step we used $\ee^{\delta\bar u^{i+\frac12}}
\bar u^{i-\frac12}=\frac{1}{(i+\frac12)\delta}\frac{\dd}{\dd \bar u}\ee^{\delta {\bar u}^{i+\frac12}}$.

For the integrals $I_3$ and $I_2$, we obtain analogous bounds, so that $R_{1,1}	\lesssim \lambda^{-\frac{1}{2i+1}}.$

The term $R_{1,2}$ can be handled in the same way as $R_{1,1}$, leading to 
$R_{1,2}	\lesssim \lambda^{-\frac{1}{2i+1}}$.

In combination, this proves the asserted estimate~\eqref{eq:est.remainder.constr}.
\end{proof}

 From~\eqref{eq:est.main.constr},~\eqref{eq:est.remainder.constr} we conclude that $\mathcal{C}\tilde f=\lambda^{-1/2}\big(c_*+O(\lambda^{-\frac{1}{2i+1}})\big)$.
Since imposing the constraint $\mathcal{C}f=\sigma$ translates into $\zeta=\sigma/\mathcal{C}\tilde f$, the expression for $\mathcal{C}\tilde f$ computed above determines the asymptotic growth of $\zeta$, as $\lambda\to\infty$, in the form
\begin{align*}
\vlo(\lambda)=\sigma\eta\sqrt{\lambda}\,(1+O(\lambda^{-\errfi})),\qquad 
\eta:=\frac{1}{c_*}=\big((\tfrac{n(1)}{n'(1)})^2+(\tfrac{n(-1)}{n'(-1)})^2\big)^{-1}.
\end{align*}
Thus, for coefficient functions satisfying the hypotheses~\ref{hp:s.gen},~\ref{hp:s.a=1/m}, the action of the operator $\mathcal{G}_\Gamma$ is given by~\eqref{eq:GGam.spectral} with $\zeta=\zeta(\lambda_k)$ as above, 
and the corresponding curvature flow takes the form
\begin{align}\label{eq:minusLaplBeltr^12}
	\sfV_\Gamma=\sigma\eta\sqrt{-\Delta_\Gamma}\kappa_\Gamma  +\sigma\mathcal{R}(\sqrt{-\Delta_\Gamma})\kappa_\Gamma,
\end{align}
where $\mathcal{R}(\sqrt{-\Delta_\Gamma})$ denotes a linear pseudo-differential operator of order strictly less than one (and hence of lower order with repect to $\sqrt{-\Delta_\Gamma}$).
For linearly degenerate mobility, i.e.\ $i=1$, we obtain the growth law
$\zeta(\lambda)=\sigma\eta\lambda^{1/2}+\sigma O(\lambda^{1/6})$ and a remainder $\mathcal{R}$ of order at most $\frac13$. 

The geometric evolution law~\eqref{eq:minusLaplBeltr^12} has the structure of a third-order quasi-linear parabolic equation, and differs both from intermediate surface diffusion, which is parabolic of order two, and from classical surface diffusion, which is parabolic of order four.
One may refer to laws of the above type more generally as \textit{fractional surface diffusion}. Notice that while~\eqref{eq:minusLaplBeltr^12} illuminates the PDE structure of the law $\sfV_\Gamma=\mathcal{G}_\Gamma\kappa_\Gamma$, its  variational structure has been captured by the arguments in Section~\ref{ssec:nonlin.str}.

\paragraph{Examples.}
We conclude by a selection of prototypical choices of $m$ and $n$ that obey the hypotheses~\ref{hp:s.gen},~\ref{hp:s.a=1/m} of the present section:
\begin{enumerate}[label=(C\arabic{*})]
	\item\label{case:degmob.cl} 
	$m(u)=1-u^2$, leading to 
	\[\alpha(u)=u-\frac13u^3.\]
	\item\label{case:degmob.ddeg}
	$m(u)=(1-u^2)^2$, leading to
	\[\alpha(u)=u-\frac23u^3+\frac15u^5.\]
\end{enumerate}
The arguably simplest choice of an admissible coupling function $n(u)$ with $\inf|n'|>0$, is given by an affine choice with non-trivial slope.
Without loss of generality, we suppose that $n$ is larger in the polymeric phase $\{u\approx+1\}:$
\begin{align}\label{eq:n.affine}
	n(u)=\beta_0+\beta_1(u+1), \quad \beta_i>0,\, i=0,1, \quad u\in[-1,1].
\end{align}
Notice that for this choice of $n$, it holds that $\frac{n(u)}{n'(u)}=(u+1)+\frac{\beta_0}{\beta_1}$ and $n''\equiv0$.
Hence,  the constraint~\eqref{eq:constr.am} simplifies to 
\begin{align*}
	\big[cf\big]_{-1}^{+1}=\sigma, \qquad c=c(u)=(u+1)+\frac{\beta_0}{\beta_1},
\end{align*}
the inhomogeneity on the right-hand side of~\eqref{eq:inhom.am} vanishes, and  $\ell\equiv0$ in the solution formula~\eqref{eq:inhom.f.gen.n}.
Thus, the preceding derivation (in \Cref{sssec:a=1/m}) shows that, if $n$ is affine, we even have exponential smallness of the remainder term, 
asymptotically as $\sqrt{\lambda}\to\infty$,
\begin{align}\label{eq:g1g2.asympt}
	b_1=c(1)+O(\ee^{-2\sqrt{\lambda}\alpha(1)}),\qquad b_2= -c(-1)+ O(\ee^{-2\sqrt{\lambda}\alpha(1)}).
\end{align}
The solution $f$ of~\eqref{eq:inhom.all.expl} is then given by
\begin{align*}
	f(u)=\frac{\vlo}{\sqrt{\lambda} }\ee^{-\alpha(1)\sqrt{\lambda}}\big(b_1\ee^{\alpha(u)\sqrt{\lambda}}+b_2\ee^{-\alpha(u)\sqrt{\lambda}}\big)
\end{align*} 
with $b$ as in~\eqref{eq:g1g2.asympt} and where $\vlo$ is determined by
\begin{align*}
	\frac{\vlo}{\sqrt{\lambda}}\big(c(1)[b_1+b_2\ee^{-2\alpha(1)\sqrt{\lambda}}]-c(-1)[b_1\ee^{-2\alpha(1)\sqrt{\lambda}}+b_2]\big)=\sigma.
\end{align*}
The identities~\eqref{eq:g1g2.asympt} imply that, as $\lambda\to\infty$,
\[\big(c(1)[b_1+b_2\ee^{-2\alpha(1)\sqrt{\lambda}}]-c(-1)[b_1\ee^{-2\alpha(1)\sqrt{\lambda}}+b_2]\big)\;= \;
c(1)^2+c(-1)^2
+O(\ee^{-2\sqrt{\lambda}\alpha(1)}). \]
Hence, letting $\eta:= \big(c(1)^2{+}c(-1)^2\big)^{-1}$, we find that 
\begin{align*}
\vlo(\lambda)=\sigma\eta\sqrt{\lambda} +\sigma\sqrt{\lambda}O(\ee^{-2\sqrt{\lambda}\alpha(1)}).
\end{align*}
 
 We conclude by summarising the main results established in the present section (\Cref{ssec:symbol}).
 \begin{proposition}\label{prop:spectral}
Assume hypotheses~\ref{hp:s.gen},~\ref{hp:s.a=1/m}, and let $a(u)m(u)=1$.
Then, the operator $\mathcal{G}_\Gamma=\mathcal{G}$ determined by the constrained elliptic equation in \Cref{prop:solveP.vn} takes the form
\begin{align*}
	\mathcal{G}_\Gamma=\sigma\eta\sqrt{-\Delta_\Gamma}+\sigma\mathcal{R}(\sqrt{-\Delta_\Gamma})
\end{align*}
with $\eta=\big((\tfrac{n(1)}{n'(1)})^2+(\tfrac{n(-1)}{n'(-1)})^2\big)^{-1}$,
where $\mathcal{R}(\sqrt{-\Delta_\Gamma})$ denotes a linear pseudo-differential operator of lower order with respect to $\sqrt{-\Delta_\Gamma}$. For linearly degenerate mobility, i.e.\ $i=1$ in~\ref{hp:e.coeff} (cf.~\ref{hp:s.gen}) the remainder $\mathcal{R}$ is of order at most $\frac13$.

If, in addition, the function $n(u)$ is affine (cf.~\eqref{eq:n.affine}), then  $\mathcal{R}(\sqrt{-\Delta_\Gamma})$ extends to a bounded linear operator on $L^2(\Gamma)$ 
with the property that $(\mathcal{R}(\sqrt{-\Delta_\Gamma})\ee_k,\ee_k)_{L^2(\Gamma)}$ decays to zero exponentially as $\sqrt{\lambda_k}\to\infty$.
 \end{proposition}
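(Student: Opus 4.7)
The plan is to leverage the spectral decomposition provided by Lemma~\ref{l:G.diagonal}, which reduces the identification of $\mathcal{G}_\Gamma$ to determining the eigenvalue map $\zeta:\Lambda\to\mathbb{R}_{>0}$, $\zeta(\lambda_k)=(\mathcal{G}_\Gamma\ee_k,\ee_k)_{L^2(\Gamma)}$. Since the constrained elliptic system~\eqref{eq:k.gen.abstract} defining $f_k$ depends on $k$ only through the scalar $\lambda=\lambda_k>0$, everything reduces to solving a single parameter-dependent second-order ODE together with its Lagrange multiplier $\zeta(\lambda)$ and extracting the asymptotics as $\lambda\to\infty$.

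To construct the solution $f_\lambda$ explicitly, I would exploit the hypothesis $\sfa\sfm=1$ equivalent to $a(u)m(u)=1$ from~\ref{hp:s.a=1/m}: the change of variables $r=\alpha(u)$ with $\alpha(u)=\int_0^u\sfm(u')\,\dd u'$ turns the homogeneous principal part into the constant-coefficient operator $-\pa_r^2+\lambda$, yielding two linearly independent solutions $f_\pm(u)=\lambda^{-1/4}\ee^{\pm\sqrt{\lambda}\alpha(u)}$ with Wronskian $2\sfm$. Variation of parameters then produces the general solution of~\eqref{eq:inhom.am}, parametrised by two free constants $b_1,b_2\in\mathbb{R}$ alongside the overall factor $\zeta$. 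Imposing the Robin-type boundary conditions~\eqref{eq:bc.am} at $u=\pm1$ fixes $b_1,b_2$ through an invertible $2{\times}2$ linear system, and the integral constraint~\eqref{eq:constr.am} then pins down $\zeta$.

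The main technical step is the asymptotic estimation of these coefficients as $\lambda\to\infty$, which is where the order of the remainder $\mathcal{R}(\sqrt{-\Delta_\Gamma})$ is extracted. Since $\sfm(u)\gtrsim(1-|u|)^{i-\frac12}$ near $u=\pm1$ by~\ref{hp:e.coeff} and~\eqref{eq:def.m(u)/}, one has $\alpha(1)-\alpha(u)\gtrsim(1-u)^{i+\frac12}$, so the crucial tail integrals $\ee^{-\sqrt{\lambda}\alpha(1)}\int_{-1}^{1}\ee^{\pm\sqrt{\lambda}\alpha(u)}\ell(u)\,\dd u$ (with $\ell=nn''/(n')^2$) can be controlled via the rescaling $\hat u=\lambda^{1/(2i+1)}(1-u)$ by $C\lambda^{-1/(2i+1)}$. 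This yields $b_1=\tfrac{n(1)}{n'(1)}+O(\lambda^{-1/(2i+1)})$ and $b_2=-\tfrac{n(-1)}{n'(-1)}+O(\lambda^{-1/(2i+1)})$. An analogous estimate on the constraint integral gives $\mathcal{C}(f/\zeta)=c_*\lambda^{-1/2}+O(\lambda^{-1/2-1/(2i+1)})$ with $c_*=\eta^{-1}$, and inverting $\zeta=\sigma/\mathcal{C}(f/\zeta)$ yields $\zeta(\lambda)=\sigma\eta\sqrt{\lambda}\,(1+O(\lambda^{-1/(2i+1)}))$, which for linearly degenerate mobility $i=1$ produces a remainder of eigenvalue order $\lambda^{1/6}$, i.e.\ order at most $\tfrac13$ relative to $\sqrt{-\Delta_\Gamma}$.

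In the affine case $n(u)=\beta_0+\beta_1(u+1)$ one has $n''\equiv0$, hence $\ell\equiv0$, the inhomogeneity in~\eqref{eq:inhom.am} vanishes, and $f$ reduces to a pure combination of $f_\pm$; the residual error integrals are then of the form $\int_{-1}^{1}\ee^{-\sqrt{\lambda}(\alpha(1)\pm\alpha(u))}\,\dd u$, which by the same Laplace-type argument decay faster than any power of $\lambda^{-1}$, giving transcendental smallness and in particular $L^2$-boundedness of $\mathcal{R}(\sqrt{-\Delta_\Gamma})$. The main obstacle is not conceptual but careful bookkeeping in the tail estimates: one must split the exponential integrals into pieces near and away from the maxima of $\alpha$, apply the correct endpoint rescaling in each, and verify that no powers are lost when passing through the matrix inversion and the constraint — this is what produces the sharp rate $\lambda^{-1/(2i+1)}$ (and the transcendental smallness in the affine case) on which the entire statement hinges.
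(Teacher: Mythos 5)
Your proposal is correct and follows essentially the same route as the paper's own derivation in Section~\ref{sssec:a=1/m}: spectral reduction via Lemma~\ref{l:G.diagonal} to the parameter-dependent ODE~\eqref{eq:k.gen.abstract}, the substitution $r=\alpha(u)$ under $\sfa\sfm=1$ giving explicit exponential solutions, variation of parameters, elimination of $b_1,b_2$ through the boundary conditions~\eqref{eq:bc.am}, and Laplace-type endpoint estimates (with the rescaling $\hat u=\lambda^{1/(2i+1)}(1-u)$) yielding the rate $O(\lambda^{-1/(2i+1)})$ both in $b_{1,2}$ and in the constraint integral~\eqref{eq:constr.am}, from which $\zeta(\lambda)=\sigma\eta\sqrt{\lambda}\,(1+O(\lambda^{-1/(2i+1)}))$ and the affine/transcendental specialisation follow exactly as in the paper. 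The remarks on the double-integral bookkeeping match the splittings carried out in the proof of~\eqref{eq:est.remainder.constr}.
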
 
Combining \Cref{prop:gs},~\Cref{l:G.diagonal}, and \Cref{prop:spectral} with the formal asymptotics in Section~\ref{sec:asymptotics} completes the justification of \Cref{ass:fsd}.

\subsection{Formal limit towards the intermediate surface diffusion flow}
\label{ssec:vanishing.slope}
In this section, we derive the assertion of \Cref{ass:fsd->isd}. 
To this end, let $\eps>0$ be a small parameter and consider the coupling function
\[n_\eps(u)=1+\eps u.\]
Further let $A_\eps^2\tau_\eps=n_\eps^4\tilde m(u)$, so that $a=\frac{\eps^{-2}}{m}$.

Our goal is to show that, as $\eps\downarrow0$, on compact subsets $\Lambda_R$ in frequency space with $R<\infty$ fixed but arbitrary, we quantitatively recover the intermediate surface diffusion law~\eqref{eq:isd.main} from the third-order versions in Proposition~\ref{prop:spectral}. To this end, it suffices to determine the leading-order asymptotic behaviour of $\zeta(\lambda)=\zeta_\eps(\lambda)$ for $\lambda\le R$ as $\eps\downarrow0$.

\paragraph{Coefficients of the second-order intermediate law.} Let us first identify the parameters $\om,\delta$ in~\eqref{eq:isd.main} for the present choice of coefficient functions in the limit $\eps\downarrow0$.
For $\eps=0$, the above choice of coefficients reduces to
$n\equiv1$ and
 $A^2(u)\tau(u)=\tilde m(u)=\frac{m(u)}{1-u^2}$, meaning that (cf.~\eqref{eq:sig-del-om.u})
\begin{align*}
\om=\int_{-1}^{+1}A(u)^2\tau(u)\sqrt{1-u^2}\,\dd u=\int_{-1}^{+1}\frac{m(u)}{\sqrt{1-u^2}}\,\dd u=\frac4\delta.
\end{align*}
Thus, in this case, 
 the propagation operator derived in \Cref{ass:isd} takes the form
\begin{align}\label{eq:Gdeldel}
\mathcal{G}_\Gamma=-\sigma\Delta_\Gamma(\delta\,\text{\normalfont Id}-\frac4\delta\Delta_\Gamma)^{-1},
\end{align}
corresponding to 
\begin{align}\label{eq:zetadeldel}
\zeta(\lambda)=\sigma\lambda\big(\delta+\frac4\delta\lambda\big)^{-1},\quad\lambda\in\Lambda.
\end{align}

\paragraph{Solution to third-order fractional laws for $\eps>0$.}
The functions $m$ and $\sfm$ are kept independent of $\eps$ and are even. Hence,
\[\alpha(\pm 1)=\pm\frac2\delta,\qquad \delta=4\big(\int_{-1}^1\sfm\,\dd u\big)^{-1}. \]
The solution $f=f_\lambda,\zeta=\zeta(\lambda)$ to~\eqref{eq:k.gen.abstract} with $\lambda=\lambda_k$ (see also~\eqref{eq:inhom.all.expl}) is obtained by replacing $\lambda$ by $(\eps\sqrt{\lambda})^2$ in the calculations of \Cref{sssec:a=1/m} and observing that $n''=0$. It takes the form
\begin{align}\label{eq:f.fsd->isd}
	f(u)=\Big(\frac{b_1}{\eps^{1/2}\lambda^{1/4}}f_+(u)\ee^{-\eps\sqrt{\lambda}\alpha(1)}+\frac{b_2}{\eps^{1/2}\lambda^{1/4}}f_-(u)\ee^{-\eps\sqrt{\lambda}\alpha(1)}\Big)\vlo
\end{align}
with $b_1,b_2,\zeta$ to be determined, where 
\begin{align*}
	f_+(u)=\frac{1}{\eps^{1/2}\lambda^{1/4}}\ee^{\eps\sqrt{\lambda}\alpha(u)},\qquad f_-(u)=\frac{1}{\eps^{1/2}\lambda^{1/4}}\ee^{-\eps\sqrt{\lambda}\alpha(u)}.
\end{align*}
We now proceed similarly as in \Cref{sssec:a=1/m} with the exception that here we need to compute all error terms explicitly up to the relevant order, since we are interested in quantitative results for $\lambda\le R$.

We first determine $b=(b_1,b_2)^T$. Notice that
\begin{align*}
	\sfa\pa_u f=\eps^{-2}\big( b_{1}\ee^{-\eps\sqrt{\lambda}(\alpha(1)-\alpha(u))}-b_{2}\ee^{-\eps\sqrt{\lambda}(\alpha(1)+\alpha(u))}\big)\vlo.
\end{align*}
Letting
\begin{align*}
	M_\eps:=\eps^{-2}\begin{pmatrix}
		1&-\ee^{-2\eps\sqrt{\lambda}\alpha(1)}
		\\ \ee^{-2\eps\sqrt{\lambda}\alpha(1)}&-1
	\end{pmatrix},
\end{align*}
we find that $b$ is determined by $M_\eps b=p$, where 
$p_1=c(1)=:c_+$, $p_2=c(-1)=:c_-$ with $c=\tfrac{n}{n'}$ (cf.\ \eqref{eq:def.c}).
Abbreviating $r:=\ee^{-2\eps\sqrt{\lambda}\alpha(1)}$, 
the inverse matrix $M_\eps^{-1}$ of $M_\eps$ can be written as
\begin{align*}
	M_\eps^{-1}:=
	\frac{\eps^2}{1-r^2}\begin{pmatrix}
		1&-r
		\\ r&-1
	\end{pmatrix}.
\end{align*}
Therefore 
\begin{align}\label{eq:b.fsd->isd}
b=M_\eps^{-1}p=\frac{\eps^2}{1-r^2}
\begin{pmatrix}
	c_+-rc_-\\rc_+-c_-
\end{pmatrix}.
\end{align}
We now impose the constraint $\big[\frac{n}{n'}f\big]_{-1}^{1}=\sigma$, which determines $\zeta$. In view of~\eqref{eq:f.fsd->isd}, it reads as
\begin{align*}
	\frac{\vlo}{\sqrt{\lambda}\eps}\big(c_+[b_1+b_2r]-c_-[b_1r+b_2]\big)=\sigma.
\end{align*}
Inserting the formula~\eqref{eq:b.fsd->isd} for $b$ and rearranging terms, this amounts to
\begin{align}\label{eq:zeta.to.isd.exact}
	&\frac{\zeta}{\sqrt{\lambda}}\frac{\eps}{1-r^2}\big((c_+^2+c_-^2)(1+r^2)-4c_+c_- r\big)=\sigma,
	\\&\text{with}\quad c_\pm=\eps^{-1}\pm 1,\quad r=\ee^{-2\alpha(1)\eps\sqrt{\lambda}}.
	\nonumber 
\end{align}
For consistency, observe that $c_+[b_1+b_2r]-c_-[b_1r+b_2]>0$ whenever $\lambda>0$.
To determine the dominant behaviour of $\zeta$ for $\lambda\le R$ and $0<\eps\ll_R1$, we abbreviate 
$\mu:=2\alpha(1)$ and note that
\begin{align*}
r&=\ee^{-\eps\sqrt{\lambda}\mu}=1-\eps\sqrt{\lambda}\mu+\frac12\eps^2\lambda\mu^2+O_R(\eps^3), \\r^2&=\ee^{-2\eps\sqrt{\lambda}\mu}=1-2\eps\sqrt{\lambda}\mu+2\eps^2\lambda\mu^2+O_R(\eps^3).
\end{align*}
Inserting these expansions as well as $c_\pm=\eps^{-1}\pm 1$ into~\eqref{eq:zeta.to.isd.exact}, and simplifying terms, we infer
\[\frac{\zeta}{2\lambda\mu}\big(8+2\mu^2\lambda+O_R(\eps)\big)=\sigma.\]
Observing that $\mu=\frac4\delta$, we thus arrive at the following quantitative formula, for $0<\eps\ll_R 1$ small,
\[\zeta(\lambda)=\sigma\lambda\Big(\delta+\frac4\delta\lambda+O_R(\eps)\Big)^{-1},\qquad \lambda\in\Lambda_R,\]
which reduces to~\eqref{eq:zetadeldel} as $\eps\downarrow0$.
Thus, for bounded frequencies, we recover in the limit $\eps\downarrow0$ the propagation operator~\eqref{eq:Gdeldel} associated to the intermediate surface diffusion flow with the same coefficients.

\section{Conclusion}

In this work, we have performed a formal asymptotic analysis of a degenerate Cahn-Hilliard model for viscoelastic phase separation, specifically deriving the sharp-interface asymptotics governing the evolution of the phase boundaries. Our analysis demonstrates that the cross-diffusive coupling between the order parameter and the bulk stress variable significantly alters the classical surface diffusion dynamics, leading to non-local geometric evolution laws.
We identified two distinct regimes determined by the nature of the coupling function and showed that these regimes can formally be connected by a singular limit. In the case of constant coupling, the interface evolves according to the intermediate surface diffusion flow.
More notably, for non-constant coupling functions that monotonically connect the phases, the effective dynamics are governed by a third-order geometric flow where the propagation operator exhibits the scaling behaviour of the fractional operator 
$\sqrt{-\Delta_\Gamma}$ at leading order.
A central result of our derivation is the characterisation of the normal velocity $\sf V$ as a Lagrange multiplier within a constrained elliptic problem. We have shown that this problem admits a rigorous variational formulation, which eventually allowed us to deduce a formal gradient-flow structure for the geometric evolution law
that reflects the gradient-flow structure of the original diffuse-interface system.
Future work will be aimed at providing rigorous underpinnings of our findings, including variational and analytical properties of the resulting geometric flows. For a toy model with constant mobility, one step of our asymptotics  concerning relaxation limits with positive interface thickness was recently made rigorous in~\cite{GH_2025}.

\appendix

\section{Differential geometry
}\label{app:diffgeo}

This appendix is a slight extension of~\cite[Appendix]{AGG_2012}, see also~\cite{PruessSimonett_2016}. It serves to determine  higher-order corrections in the geometric quantities and transformed differential operators.

\subsection{Geometric identities}\label{ssec:diffgeo.id}
The signed distance function $d=d(x)$ to the smooth, closed hypersurface $\Gamma\Subset \mathbb{R}^\dimx$ satisfies in a tubular neighbourhood of $\Gamma$ the identity (cf.~\cite[Chapter 2.3.2]{PruessSimonett_2016})
\begin{align*}
	\Delta d=\sum_{i=1}^{\dimx-1}\frac{-\kappa_{i}\circ\proj}{1-\kappa_i\circ\proj \,d},
\end{align*}
where $\{\kappa_i\}$ denote the principle curvatures of $\Gamma$ and $\proj$ the orthogonal projection onto $\Gamma$.

Taylor expanding the right-hand side, for small $|d|$, gives for $\kappa_i:=\kappa_i\circ\proj$
\begin{align*}
\sum_{i=1}^{\dimx-1}\frac{-\kappa_i}{1-\kappa_i d}
	&=-\sum_{i=1}^{\dimx-1}\kappa_i-\sum_{i=1}^{\dimx-1}\kappa_i^2d-\sum_{i=1}^{\dimx-1}\kappa_i^3d^2+O(|d|^3).
\end{align*}
Define
\begin{align*}
	\kappa_\Gamma:=\sum_{i=1}^{\dimx-1}\kappa_i,\quad
	k_2=\big(\sum_{i=1}^{\dimx-1}\kappa_i^2\big)^{\frac12},\quad 
	k_3=\big(\sum_{i=1}^{\dimx-1}\kappa_i^3\big)^{\frac13}.
\end{align*}
The quantity $\kappa_\Gamma$ is the mean curvature of $\Gamma$, and $k_2$ equals the Frobenius norm $|\mathcal{W}_\Gamma|=\big(\sum_{i=1}^{\dimx-1}\kappa_i^2\big)^{\frac12}$ of the Weingarten tensor $\mathcal{W}_\Gamma$.

In conclusion,
\begin{align*}
	\Delta d=-\kappa_\Gamma\circ\proj-|\mathcal{W}_\Gamma\circ\proj|^2d- k_3^3\circ\proj \,d^2+O(|d|^3).
\end{align*}

\subsection{Transformations}\label{ssec:diffgeo.transform}

For completeness, we briefly sketch the derivation of the well-established formulas used in the transformation of spatial differential operators to the new, rescaled variables introduced in \Cref{sec:prelim}. The presentation follows~\cite[Appendix]{AGG_2012} and uses notations from~\Cref{sec:prelim}.

Let $\gamma^{\ve}(s,\rho):=\gamma(s)+\ve\rho\nu(s)$, $\ve\in[0,1]$, and $\mathbb{G}^\ve=(g_{ij}^\ve)$, where
$g_{ij}^\ve=\pa_{i}\gamma^{\ve}\cdot\pa_{j}\gamma^{\ve}$. 
 Notice that $g_{ij}^\ve=g^\ve_{ji}$ for all $i,j\in\{1,\dots,\dimx\}$.
 Abbreviate $\dimx'=\dimx-1$.
  For all $i\in\{1,\dots, \dimx'\}$ it holds that  $g^\ve_{i\dimx}\equiv0$ due to $\pa_{s_i}\nu\cdot\nu=0$.
Thus, the matrix $\mathbb{G}^\ve$ and its inverse take the block diagonal form
\begin{align*}
	\mathbb{G}^\ve=
	\begin{pmatrix}
	\mathbb{G}^\ve_{\dimx'\times \dimx'}&0_{\dimx'}
		\\0_{\dimx'}^T&\ve^{2}
	\end{pmatrix},\qquad (\mathbb{G}^\ve)^{-1}=
	\begin{pmatrix}
		(\mathbb{G}^\ve_{\dimx'\times \dimx'})^{-1}&0_{\dimx'}
		\\0_{\dimx'}^T&\ve^{-2}
	\end{pmatrix},
\end{align*}
where $\dimx'=\dimx-1$.

\paragraph{Differential operators in new coordinates.}
Let $\rho=s_\dimx$ and $U=u\circ\gamma^\ve, \boldsymbol{J}=\boldsymbol{j}\circ\gamma^\ve$.
Then the differential operators in the reference coordinates determined by the parametrisation $\gamma^\ve$ are given by 
\begin{align*}
\nabla_x u\circ \gamma^\ve=\sum_{i,j=1}^\dimx (g^\ve)^{ij}\pa_{s_i}U\pa_{s_j}\gamma^\ve
&=\sum_{i,j=1}^{\dimx-1} (g^\ve)^{ij}\pa_{s_i}U\pa_{s_j}\gamma^\ve+\ve^{-1}\pa_\rho U\,\nu
\\&=\nablaGeps U+\ve^{-1}\pa_\rho U\,\nu,
\end{align*}
\begin{align*}
	\divv_x \boldsymbol{j}\circ \gamma^\ve=\sum_{i,j=1}^\dimx (g^\ve)^{ij}\pa_{s_j}\gamma^\ve\cdot\pa_{s_i}\boldsymbol{J}
	=\divGeps \boldsymbol{J}+\ve^{-1}\pa_\rho \boldsymbol{J}\cdot \nu.
\end{align*}
Combined with basic geometric identities, the above formulas imply~\eqref{eq:diffop.traf.space}. 

\paragraph{Expansions.}
Let $g_{ij}:=g_{ij}^0$. Then
\begin{align*}
g_{ij}^\ve&=g_{ij}+\ve\rho(\pa_{s_i}\nu\cdot\pa_{s_j}\gamma+\pa_{s_j}\nu\cdot\pa_{s_i}\gamma)+\ve^2\rho^2\pa_{s_i}\nu\cdot\pa_{s_j}\nu
\\&=g_{ij}+d r_{ij}^{(1)}+d^2r_{ij}^{(2)}\quad\text{with}\; d:=d(\gamma^\ve(s,\rho))=\ve\rho,
\end{align*}
where the coefficients $r_{ij}^{(l)}$ only depend on $\gamma=\gamma(s)$.
Hence, for suitable $(\tilde r^{(l)})^{ij},l=1,2,$ that only depend on $\gamma$,
\begin{align*}
	(g^{ij})^\ve&=g^{ij}+d(\tilde r^{(1)})^{ij}+d^2(\tilde r^{(2)})^{ij} +O(|d|^3),\quad d:=\ve\rho.
\end{align*}
It follows that 
\begin{align*}
\nablaGeps U&=\nablaG U
+d\sum_{i,j=1}^{\dimx-1} 
\big(g^{ij}\pa_{s_j}\nu+(\tilde r^{(1)})^{ij}\pa_{s_j}\gamma\big)\pa_{s_i}U
+O(|d|^2)
\\&=\nablaG U
+d\sum_{i=1}^{\dimx-1} \mrem^i
\pa_{s_i}U+O(|d|^2),
\end{align*}
where $\mrem^i:=\sum_{j=1}^{\dimx-1}\big(g^{ij}\pa_{s_j}\nu+ (\tilde r^{(1)})^{ij}\pa_{s_j}\gamma\big)$ is tangential, i.e.\ \[\nu\cdot\mrem^i\equiv0,\quad i=1,\dots,\dimx-1.\]
Likewise, we obtain 
\begin{align*}
	\divGeps \boldsymbol{J}&=\divG \boldsymbol{J}
	+d\sum_{i,j=1}^{\dimx-1} 
	\big(g^{ij}\pa_{s_j}\nu+(\tilde r^{(1)})^{ij}\pa_{s_j}\gamma\big)\cdot\pa_{s_i}\boldsymbol{J}
	+O(|d|^2)
	\\&=\divG \boldsymbol{J}
	+d\sum_{i=1}^{\dimx-1} \mrem^i\cdot\pa_{s_i}\boldsymbol{J}+O(|d|^2),
\end{align*}
where throughout $d:=\ve\rho$.

\printbibliography[heading=bibintoc]

\end{document}